\newtheorem{Theorem}{Theorem}[section]
\newtheorem{Proposition}{Proposition}[section]
\newtheorem{Lemma}{Lemma}[section]
\newtheorem{Remark}{Remark}[section]
\newtheorem{Corollary}{Corollary}[section]
\newtheorem{Definition}{Definition}[section]
\numberwithin{equation}{section}
\newcommand{\bTheorem}[1]{
%\bigskip [Theorem T#1] \bigskip
\begin{Theorem} \label{T#1} }
\newcommand{\eT}{\end{Theorem}}
\newcommand{\bProposition}[1]{
%\bigskip [Proposition P#1] \bigskip
\begin{Proposition} \label{P#1}}
\newcommand{\eP}{\end{Proposition}}
\newcommand{\bLemma}[1]{
%\bigskip [Lemma L#1] \bigskip
\begin{Lemma} \label{L#1} }
\newcommand{\eL}{\end{Lemma}}
\newcommand{\bCorollary}[1]{
%\bigskip [Corollary C#1] \bigskip
\begin{Corollary} \label{C#1} }
\newcommand{\eC}{\end{Corollary}}
\newcommand{\bFormula}[1]{
%\bigskip {\it Formule #1} \bigskip
\begin{equation} \label{#1}}
\newcommand{\eF}{\end{equation}}
\newcommand{\Div}       {{\rm div}}
\newcommand{\dx}        {\,{\rm d}\x}
\newcommand{\dt}        {\,{\rm d}t}
\newcommand{\D}     {{\mathcal D}}
\newcommand{\B}     {{\mathcal B}}
\newcommand{\F}     {{\mathcal F}}
\newcommand{\bY} {{\bf Y}}
\newcommand{\bZ} {{\bf Z}}
\newcommand{\bU} {{\bf U}}
\newcommand{\vr}        {\varrho}
\newcommand{\vu}        {\vc{u}}
\newcommand{\vphi}  {\pmb{\varphi}}
\newcommand{\Om}{\Omega}
\newcommand{\vn}{\vc{n}}
\newcommand{\vU}{\vc{U}}
\newcommand{\vw}{\vc{w}}
\newcommand{\vV}{\vc{V}}
\newcommand{\dv}{{\rm div}}
\newcommand{\dS}{{\rm d} {S}}
\renewcommand{\d}{{\rm d} }
\newcommand{\R}{\mathbb R}
\newcommand{\de}{\partial}
\def\bU{{\bf U}}
\def\bY{{\bf Y}}
\newcommand{\vc}[1]     { {\bf #1} }
\newcommand{\tn}[1]     {\mathbb{#1} }
\newcommand{\x}{{\mathbf x}}
\newcommand{\X}{{\mathbf X}}
\newcommand{\y}{{\mathbf y}}
\newcommand{\ZZ}{\widetilde{\bZ}_2}
\newcommand{\ZZj}{\widetilde{\bZ}_1}
\newcommand{\Ot}{\widetilde{\tn{O}}}
\newcommand{\mc}{\mathcal}
\begin{document}

\title{\bf Measure-valued solutions and weak-strong uniqueness for the incompressible inviscid fluid-rigid body interaction}

\author{
${^1}$Matteo Caggio, ${^2}$Ond\v rej Kreml, ${^2}$\v S\'arka Ne\v casov\'a\textsuperscript{*}, ${^2}$Arnab Roy, ${^3}$Tong Tang
}
\date{}

\maketitle
%\textsup
%\thispagestyle{fancy}

\begin{center}
\textsuperscript{1}
Department of Mathematics, Faculty of Science, University of Zagreb \\
Bijeni\v cka cesta 30, 10000 Zagreb, Croatia. \\
%matteo.caggio@math.hr
\end{center}

\begin{center}
\textsuperscript{2}
Institute of Mathematics, Czech Academy of Sciences \\
\v Zitn\'a 25, 115 67 Praha 1, Czech Republic.\\
\end{center}

\begin{center}
\textsuperscript{3}
Department of Mathematics, College of Sciences, \\
Hohai University, Nanjing 210098, P.R. China.
\end{center}

\begin{center}
\textsuperscript{*}Corresponding author
\end{center}

\vspace{1cm}

\begin{abstract}
We consider a coupled system of partial and ordinary differential equations describing the interaction between an isentropic inviscid fluid and a rigid body moving freely inside the fluid. We prove the existence of measure-valued solutions which is generated by the vanishing viscosity limit of incompressible fluid-rigid body interaction system under some physically constitutive relations. Moreover, we show that the measure-value solution coincides with strong solution on the interval of its existence. This relies on the weak-strong uniqueness analysis.
\end{abstract}

{\bf Key words.} Euler equations, fluid–rigid body interaction, measure-valued solutions, weak-strong uniqueness.
\bigskip

 {\bf AMS subject classifications.} 35Q35, 35Q31, 35R37, 76B99.

%{\bf Abstract:} %{\it500 words maximum (4000 character maximum).}
%\tableofcontents

%%%INTRO%%%
\section{Introduction}
We consider the motion of a rigid body inside an isentropic inviscid fluid. The fluid and the body occupy a bounded domain $\Omega\subset \R^{3}$. At the time $t\in \R^{+}$, we denote by $\mathcal{B}(t)\subset\Omega$ the bounded domain occupied by the rigid body and by $\mathcal{F}(t)=\Omega\setminus \overline{\mathcal{B}(t)}$, the domain filled by the fluid. Assuming that the initial position $\mathcal{B}(0)$ of the rigid body is prescribed, we denote $\mathcal{B}_{0}=\mathcal{B}(0)$ and, similarly, $\mathcal{F}_{0}=\mathcal{F}(0)$. The interface between the body and the fluid is denoted
by $\partial \mathcal{B}(t)$ and the normal vector to the boundary is denoted by $\vn(t,\x)$ pointing outside $\Omega$ and inside $\mathcal{B}(t)$. For $T > 0$ we introduce the following notation for the space-time cylinders
\begin{equation} \label{domain}
\begin{array}{l}
{Q}_{\mathcal{F}}= \bigcup_{t\in (0,T)} \{t\} \times \mathcal{F}(t),

%\{(t,\x)\in \R^4:\,\,t\in (0,T),\quad \x\in \mathcal{F}%(t)\},
\\
{Q}_{\partial \mathcal{B}}=\bigcup_{t\in (0,T)} \{t\} \times \partial\mathcal{B}(t),

%\{(t,\x)\in{\R}^4:\,\,t\in (0,T),\quad \x\in \partial\mathcal{B}(t)\},
\\
{Q}_{\mathcal{B}}=\bigcup_{t\in (0,T)} \{t\} \times \mathcal{B}(t),

%\{(t,\x)\in{\R}^4:\,\,t\in (0,T),\quad \x\in\mathcal{B}(t)\},
\\
Q_T= (0,T) \times \Omega.
\end{array}
\end{equation}
The fluid motion is governed by the following system of equations
\begin{equation}\label{fluidmotion:ineuler}
\left\{
\begin{array}{ccc}
\partial_{t} \vu_{\mathcal{F}} + (\vu_{\mathcal{F%
}} \cdot \nabla) \vu_{\mathcal{F}} + \nabla p_{\mc{F}} = 0,\quad \operatorname{div} \vu_{\mathcal{F}}=0 & \mathrm{in} & {Q}_{%
\mathcal{F}},\\
\vu_{\mathcal{F}} \cdot \vc{n} = 0 & \mathrm{on} & (0,T) \times \partial\Omega , \\
\vu_{\mathcal{F}} \cdot \vc{n} = \vu_{\mathcal{B}} \cdot \vc{n}
&\mathrm{on} & Q_{\partial
\mathcal{B}}, \\
\vu_{\mathcal{F}}(0)= \vu_{\F_{0}} & \mathrm{in} & \mathcal{F%
}_{0},\\
\end{array}%
\right.
\end{equation}%
where $\vu_{\mathcal{F}}$ denotes the velocity of the fluid, the scalar function $p_\F$ is the pressure, and $\vu_{\mathcal{B}}$ is the full velocity of the rigid body. We assume that the external body forces acting on the fluid are zero. The velocity of the rigid body is given by
\begin{equation}\label{eq:rigidbodyvelocity}
\vu_{\mathcal{B}}(t,\x)=\vV(t) + \vw(t)\times (\x-\X(t))
\end{equation}
for any $(t,\x) \in Q_\B$, where the translation velocity $\vV$ and the angular velocity $\vw$ of the body satisfy the following system of the equations
\begin{equation}\label{bodymotion:ineuler}
\left\{
\begin{array}{l}\displaystyle
m\frac{\d}{\dt}\vV(t) = \int_{\partial\mathcal{B}(t)} p_{\mc{F}} \tn{I} \cdot \vc{n} \d S\quad
 \text{ in } (0,T), \\ \displaystyle
\tn{J}(t)\frac{\d}{\dt}\vw(t) = \tn{J}(t)\vw(t)\times\vw(t) + \int_{\partial\mathcal{B}(t)} (\x-\X(t)) \times p_{\mc{F}} \tn{I} \cdot \vc{n} \d S\quad \text{ in } (0,T),\\
\vV(0)=\vV_{0},\qquad \vw(0)= \vw_{0}.
\end{array}
\right.
\end{equation}
Here, the mass of the body, the center of mass $\X$ and the inertial tensor $\tn{J}$ are respectively,

\begin{align}
    m &= \int_{\B(t)} \vr_\B(t, \x) \dx, \label{def:m} \\
    \X(t) &= \frac{1}{m} \int_{\B(t)} \vr_\B(t, \x)\x \dx, \label{def:X} \\
    \tn{J}(t)\vc{a}\cdot\vc{b} &= \int_{\B(t)} \vr_\B(t, \x) \left[\vc{a}\times (\x-\X(t))\right]\cdot\left[\vc{b}\times (\x-\X(t))\right] \dx, \label{def:J}
\end{align}
with $\vr_\B$ denoting the density of the body which is assumed to be smooth but does not have to be constant, i.e. throughout this paper we assume $\vr_\B(0,\x) \in C^1(\overline{\B_0})$ and $\vr_\B(0,\x) \geq c_0 > 0$ in $\overline{\B_0}$, which then implies $\vr_\B \in C^1(\overline{Q_\B})$, $\vr_\B \geq c_0 > 0$ in $\overline{Q_\B}$. Note that, without loss of generality, we can assume that the center of mass of the body is at the origin at time zero, namely $\X(0) = 0$.
The position of the body $\B(t)$ is given by a time-dependent family of isometries of $\R^3$ such that
\begin{equation}\label{eq:isom1}
\eta[t]:  \R^3 \to \R^3,\ \ \overline{\B(t)}= \eta[t](\overline{\B_0})\ \ \mbox{for}\ 0\leq t\leq T,
\end{equation}
where the mapping $\eta[t]$ satisfies
\begin{equation}\label{eq:isom2}
\eta[t](\x) = \X(t) + \tn{O}(t) \x,\ \ \tn{O}(t)\in SO(3).
\end{equation}
The velocity of the body $\vu_\B(t,\x)$ is then naturally related to the isometries $\eta[t]$ by
\begin{equation}\label{eq:QO}
    \vV(t)= \frac{\d}{\d t} \X(t),\ \ \tn{Q}(t) = \left(\frac{\d}{\d t}\tn{O}(t)\right)\left(\tn{O}(t)\right)^{-1}
\end{equation}
for a.e. $t \in (0,T)$, where $\tn{Q}$ is an antisymmetric matrix such that 
\begin{equation}\label{eq:Qw}
    \tn{Q}(t)(\x-\X(t)) = \vw(t) \times (\x-\X(t)).
\end{equation}
Relations \eqref{eq:QO}-\eqref{eq:Qw} are consequence of the fact that the body is transported by its velocity, hence the family of isometries $\eta[t]$ satisfies 
\begin{equation}\label{eq:etaODE}
    \frac{\d}{\d t}\eta[t](\x) = \vu_\B(t,\eta[t](\x)) \qquad \mbox{ with } \eta[0](\x) = \x.
\end{equation}
Finally, equation \eqref{eq:etaODE} implies that the indicator function $\mathds{1}_\B(t,\x)$ of the set $\B(t)$ as well as the density of the body $\vr_\B(t,\x)$ satisfy the following transport (and continuity) equations
\begin{equation}\label{eq:chiBeq}
    \partial_t \mathds{1}_\B + \vu_\B\cdot\nabla \mathds{1}_\B =  \partial_t \mathds{1}_\B + \Div (\mathds{1}_\B \vu_\B) = 0 \qquad \mbox{ on } \mathbb{R}^3,
\end{equation}
\begin{equation}\label{eq:vrBeq}
    \partial_t \vr_\B + \vu_\B\cdot\nabla \vr_\B =  \partial_t \vr_\B + \Div (\vr_\B \vu_\B) = 0 \qquad \mbox{ on } \B(t).
\end{equation}

\subsection{Discussion and Main result}
In this paper, our first contribution is the proof of the existence theorem for measure-valued solutions to the system \eqref{fluidmotion:ineuler}-\eqref{bodymotion:ineuler}. In our framework the measure-valued solution consists of the position of the body $\B$, a Young measure $Y_{t,\x}$ and a dissipation defect $\D$, which is a bounded function in time. We give the precise definition of a measure-valued solution in Section \ref{s:existence}, see Definition \ref{mvs}. By $L^p$, resp. $H^m$ we denote the Lebesgue resp. Sobolev spaces. By $X_{\sigma}$ we denote a function space $X$ with the additional property of consisting of divergence free functions. For simplicity of notation we omit writing differentials $\dx$ and $\dt$ in the integral formulas in the rest of the paper.

\begin{Theorem}\label{thm:measure-euler}
Suppose $\Omega$ and $\mc{B}_0 \subset \Omega$ are two regular bounded domains of $\R^3$ and let $T > 0$. Let $\vu_{\F_0} \in L^2(\F_0)$, $\vV_0,\vw_0 \in \R^3$, such that $\vu_{\mc{F}_0}\cdot \vc{n}=(\vV_0 + \vw_0 \times \x)\cdot \vc{n}$ on $\partial \B_0$ and let $Y_{0,\x} = \delta_{\vV_0 + \vw_0 \times \x}$ for $\x \in \B_0$ and $Y_{0,\x} = \delta_{\vu_{\F_0}}$ for $\x \in \F_0$. Then there exists a measure-valued solution $(\B,Y_{t,\x},\D)$ of the system \eqref{fluidmotion:ineuler}-\eqref{bodymotion:ineuler} on time interval $(0,T)$ with initial data $(\B_0, Y_{0,\x})$.%, where
%\begin{equation*}
%Y_{0,\x} =
%\delta_{\vu_{\mc{F}_0}}\ \ \mbox{in}\ {\F_0}.
%\end{equation*}
\end{Theorem}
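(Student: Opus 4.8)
The plan is to construct the measure-valued solution as the vanishing viscosity limit of the incompressible Navier--Stokes--rigid body interaction system, so the proof naturally splits into three stages: (i) set up the viscous approximation and recall/derive uniform a~priori bounds, (ii) pass to the limit using Young measures and establish the measure-valued formulation of the momentum equation together with the body ODEs, and (iii) identify the dissipation defect and verify the energy inequality. For stage (i), I would fix $\ep>0$ and consider the system obtained by adding $\ep \Delta \vu_\F$ (with a no-slip or Navier-type boundary condition, say $\vu_\F = \vu_\B$ on $Q_{\partial\B}$) to \eqref{fluidmotion:ineuler}, together with the body equations \eqref{bodymotion:ineuler} in which the surface integral is replaced by the full Cauchy stress; existence of weak solutions $(\B_\ep, \vu_\ep, \vV_\ep, \vw_\ep)$ on $(0,T)$ for such viscous fluid--rigid body systems is classical (Conca--San Mart\'in--Tucsnak, Desjardins--Esteban, etc.). The energy balance gives the uniform bounds
\begin{equation}\label{eq:planenergy}
\frac12 \int_\Omega \rho_\ep |\vu_\ep|^2 + \ep \int_0^T \int_{\F_\ep(t)} |\nabla \vu_\ep|^2 \leq \frac12 \int_\Omega \rho_0 |\vu_0|^2,
\end{equation}
where $\rho_\ep = \mathds{1}_{\F_\ep}+\vr_\B \mathds{1}_{\B_\ep}$ and $\vu_\ep$ is understood as the combined fluid/body velocity field; in particular $\{\vu_\ep\}$ is bounded in $L^\infty(0,T;L^2(\Omega))$ and $\{\vV_\ep\},\{\vw_\ep\}$ are bounded in $L^\infty(0,T)$.

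For stage (ii), I would first use the bounds on $\vV_\ep, \vw_\ep$ together with the ODE \eqref{eq:etaODE} to extract (via Arzel\`a--Ascoli) a subsequence along which $\eta_\ep[t]\to\eta[t]$ uniformly, hence $\mathds{1}_{\B_\ep}\to\mathds{1}_\B$ strongly in $C([0,T];L^q(\Omega))$ and $\X_\ep\to\X$, $\tn{O}_\ep\to\tn{O}$ in $C([0,T])$; this defines the limiting body position $\B$. Next, by the fundamental theorem on Young measures, the bounded sequence $\{\vu_\ep\}$ in $L^\infty(0,T;L^2(\Omega))$ generates (up to subsequence) a Young measure $Y_{t,\x}$ on $\R^3$ whose barycenter is the weak-$*$ limit $\vu$. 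The delicate point is that the body part of $\vu_\ep$ is an affine field $\vV_\ep+\vw_\ep\times(\x-\X_\ep)$ with no oscillation, so on $\B(t)$ the Young measure collapses to $\delta_{\vu_\B}$ with $\vu_\B = \vV+\vw\times(\x-\X)$ and $(\vV,\vw)$ the limits of $(\vV_\ep,\vw_\ep)$; this forces the structure required in Definition \ref{mvs}. One then passes to the limit in the weak formulation of the coupled momentum equation tested against rigid-motion-compatible divergence-free test functions: the nonlinear term $\vu_\ep\otimes\vu_\ep$ converges weakly to $\langle Y_{t,\x}, \xi\otimes\xi\rangle$, producing a Reynolds-type defect measure, the viscous term vanishes because $\ep\|\nabla\vu_\ep\|_{L^2}^2$ is bounded, and the pressure is eliminated by the choice of test functions; for the body ODEs one uses the strong convergence of $\X_\ep,\tn{O}_\ep$ and weak convergence of the momenta $m\vV_\ep$, $\tn{J}_\ep\vw_\ep$ to pass to the limit in \eqref{bodymotion:ineuler}, the Coriolis term $\tn{J}\vw\times\vw$ being handled by the fact that $\vw_\ep$ converges pointwise a.e. (it solves an ODE with bounded right-hand side, so it is equicontinuous).

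For stage (iii), I would define the dissipation defect $\D(t)$ as the (weak-$*$ limit of the) difference between the limiting kinetic energy of the generating sequence and the kinetic energy of the Young measure plus the concentration defect; concretely $\D \in L^\infty(0,T)$, $\D\geq 0$, controls the concentration part of $\vu_\ep\otimes\vu_\ep$ via a standard inequality $|\mathfrak{R}| \leq c\, \D$ in the sense of measures, and the energy inequality \eqref{eq:planenergy} passes to the limit (using weak lower semicontinuity) to give $\frac12\int_\Omega\langle Y_{t,\x},|\xi|^2\rangle\rho + \D(t) \leq \frac12\int_\Omega\rho_0|\vu_0|^2$ for a.e. $t$. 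Finally the initial condition $Y_{0,\x}=\delta_{\vu_0}$ is inherited from the strong convergence at $t=0$ (or by the usual argument that the energy inequality combined with weak continuity in time forces $\D(0^+)=0$ and no initial concentration). The main obstacle I expect is stage (ii): carefully reconciling the Young-measure description of the fluid part with the rigid (non-oscillatory) body part across the moving interface $Q_{\partial\B}$, in particular showing that the trace conditions $\vu_\F\cdot\vn = \vu_\B\cdot\vn$ survive the limit in the measure-valued sense and that the test-function space (divergence-free fields that are rigid on $\B(t)$) can be used uniformly in $\ep$ despite $\B_\ep$ moving — this requires a change of variables flattening the interface, or equivalently using the ``one-fluid'' formulation on $\Omega$ with the density $\rho_\ep$ as in \eqref{eq:chiBeq}--\eqref{eq:vrBeq}, and then the compactness of $\mathds{1}_{\B_\ep}$ established above does the job.
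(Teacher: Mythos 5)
Your overall strategy --- vanishing viscosity, Young measures generated by the velocity fields, an energy concentration defect, and a comparison inequality controlling the Reynolds defect --- is the same as the paper's. However, two of your steps, as written, would not go through. First, the choice of boundary conditions for the viscous approximation is not innocent. With no-slip at the interface and at $\partial\Omega$ (your preferred option, and the setting of the existence results of Conca--San Mart\'in--Tucsnak and Desjardins--Esteban that you cite), the weak formulation of the viscous problem only admits test functions that are rigid on $\B_\ep(t)$, globally $H^1$ (hence continuous across $\partial\B_\ep(t)$) and vanishing on $\partial\Omega$; passing to the limit therefore yields the momentum identity only for such test functions. Definition \ref{mvs} requires \eqref{momentum:euler} for every $\vphi\in V_\tau$, whose fluid and body parts need only agree in the normal component across $\partial\B(t)$ and whose fluid part is merely tangent to $\partial\Omega$. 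This is precisely why the paper replaces the Euler system by the slip problem \eqref{fluidmotion:inNSE} and relies on the existence theory of G\'erard-Varet and Hillairet (Theorem \ref{t:GVH}), whose weak formulation \eqref{momentum:NS} is stated exactly on $V_T$; if you keep no-slip you obtain a strictly weaker object than the one needed for Theorem \ref{thm:main result}.

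Second, your treatment of the body equations has a gap. You propose to pass to the limit in the ODEs \eqref{bodymotion:ineuler} and justify pointwise (equicontinuous) convergence of $\vw_\ep$ by saying it solves an ODE with bounded right-hand side; but that right-hand side contains the surface integral of the Cauchy stress $\sigma(\vu^{\varepsilon}_{\F},p^{\varepsilon}_{\F})$ over $\partial\B_\ep(t)$, which the energy estimate does not bound uniformly in $\varepsilon$, so no equicontinuity of $(\vV_\ep,\vw_\ep)$ follows this way. For the same reason, the claim that the Young measure on the body collapses to a Dirac ``because the affine field has no oscillation'' is not automatic: under the $L^\infty$ bounds alone, oscillations of $\vV_\ep,\vw_\ep$ in time are a priori possible and must be excluded by a compactness argument based on the coupled weak momentum equation; this is exactly what Proposition \ref{conv:body}, i.e. \cite[Proposition 3.4]{GH2}, provides and what the paper invokes. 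Once that compactness is in hand, the limit body ODEs are not needed at all: Definition \ref{mvs} only asks for the combined weak identity \eqref{momentum:euler}, the transport of $\chi_{\B}$, and the Dirac structure \eqref{bodyYoung}, so attempting to recover \eqref{bodymotion:ineuler} in the limit is both unjustified with the available bounds and superfluous. The remaining ingredients of your plan (vanishing of the viscous term, energy defect $\D$, the inequality controlling the concentration part of $\vu_\ep\otimes\vu_\ep$ by $\D$, and the initial data) match the paper's argument, which implements the last point via the comparison Lemma \ref{comparison} with $\xi=2$.
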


Our second main theorem is the weak-strong uniqueness theorem for measure-valued solutions of system \eqref{fluidmotion:ineuler}-\eqref{bodymotion:ineuler}. Before stating this theorem we first recall the result proved in \cite[Theorem 1.3]{MT10} regarding the existence of strong solutions to the system of rigid body moving inside an incompressible inviscid fluid. We note that the strong solution consists of the position of the body $\B(t) \subset \Omega$, fluid velocity $\vu_\F(t,\x)$, fluid pressure $p_\F(t,\x)$, translation velocity of the body $\vV(t)$ and angular velocity of the body $\vw(t)$ such that the equations \eqref{fluidmotion:ineuler}-\eqref{bodymotion:ineuler} are satisfied pointwise.

%Before doing that, we need to introduce some notations that we use in stating the appropriate space for existence of solution. 

\begin{Theorem}\label{thm:strong solution}
Let $m\geq 3$ be an integer and $\B_0 \subset \subset \Omega$. Let $\vV_0 \in \R^3$, $\vw_0 \in \R^3$ and $\vu_{\F_{0}} \in H^m(\F_0,\R^3)$ satisfy:
\begin{align}
    \operatorname{div}\vu_{\mathcal{F}_0}(\x)&=0,\quad \x\in \F_0, \label{cc1}\\
    \vu_{\mathcal{F}_0}(\x)\cdot \vc{n}(\x)&= (\vV_0 + \vw_0 \times \x)\cdot \vc{n}(\x),\quad   \x\in \partial\B_0,\label{cc2}\\
  \vu_{\mathcal{F}_0}(\x)\cdot \vc{n}(\x)&= 0,\quad   \x\in \partial\Omega.\label{cc3}
\end{align}
Then there exists $T_0>0$ such that the system \eqref{fluidmotion:ineuler}-\eqref{bodymotion:ineuler} admits a unique strong solution $(\B,\vu_\F,p_\F,\vV,\vw)$ on time interval $(0,T_0)$ with initial data $(\B_0, \vu_{\mc{F}_0}, \vV_0, \vw_0)$
\begin{align}\label{strongspace}
    \vV &\in C^1[0,T_0),\quad \vw \in C^1[0,T_0), \notag\\
\vu_{\mathcal{F}} &\in C([0,T_0);H^m(\F(t))) \cap C^1([0,T_0);H^{m-1}(\F(t))),\\
p_{\F} &\in C([0,T_0);N^{m+1}(\F(t))),\notag
\end{align}
where for any open set $\mathcal{O} \subset \R^3$,
\begin{equation*}
    N^m(\mathcal{O})=\left\{q\in H^m(\mathcal{O}) \mid \int_{\mathcal{O}} q(\x) = 0\right\}.
\end{equation*}
\end{Theorem}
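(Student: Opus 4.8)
Since \Cref{thm:strong solution} is borrowed from \cite[Theorem 1.3]{MT10}, we only outline the strategy here; the plan is to reduce the free-boundary problem to a problem on a fixed domain and then close a contraction argument in the high-regularity class \eqref{strongspace}.

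First I would encode the unknown position of the body through the rigid motion $\eta[t]$ associated with $(\vV,\vw)$ by \eqref{eq:QO}--\eqref{eq:Qw}, and build a family of diffeomorphisms $\Phi_t:\overline{\F_0}\to\overline{\F(t)}$ that coincides with $\eta[t]$ in a neighbourhood of $\partial\B(t)$ and with the identity near $\partial\Omega$; such $\Phi_t$ exists and remains a diffeomorphism for short times as long as $\B(t)\subset\subset\Omega$. Pulling back \eqref{fluidmotion:ineuler} by $\Phi_t$ produces a quasilinear Euler-type system on the fixed domain $\F_0$ with coefficients depending on $(\vV,\vw)$ and on $\int_0^t(\vV,\vw)\,\d s$, coupled to the transformed body equations \eqref{bodymotion:ineuler}.

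Next I would eliminate the pressure. Taking the divergence of the momentum equation in \eqref{fluidmotion:ineuler} together with $\operatorname{div}\vu_\F=0$ yields, at each time, a Neumann problem for $p_\F$ on $\F(t)$ whose boundary datum on $\partial\B(t)$ is affine in the body accelerations $\tfrac{\d}{\dt}\vV$, $\tfrac{\d}{\dt}\vw$; inserting the resulting expression for $p_\F$ into \eqref{bodymotion:ineuler} gives the added-mass reformulation $\big(\tn{M}+\tn{M}_a(t)\big)\big(\tfrac{\d}{\dt}\vV,\tfrac{\d}{\dt}\vw\big)^\top=\mathcal{R}$, where $\tn{M}=\operatorname{diag}(m\tn{I},\tn{J})$, the added-mass operator $\tn{M}_a(t)$ is symmetric positive semidefinite, and $\mathcal{R}$ collects lower-order terms quadratic in $(\vu_\F,\vV,\vw)$. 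Hence $\tn{M}+\tn{M}_a(t)$ is invertible and one obtains $\big(\tfrac{\d}{\dt}\vV,\tfrac{\d}{\dt}\vw\big)$, and then $p_\F$, as explicit functions of $(\vu_\F,\vV,\vw)$.

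With the pressure and the body accelerations in closed form, I would run the classical iteration: from approximate body data construct $\Phi_t$, solve the transported quasilinear equation on $\F_0$ by Kato's method (transport of the vorticity plus elliptic recovery of the divergence-free velocity compatible with the impermeability conditions \eqref{cc1}--\eqref{cc3}), solve the body ODEs \eqref{bodymotion:ineuler}, and verify that the resulting map is a contraction on a ball of $C([0,T_0];H^m(\F_0))\cap C^1([0,T_0];H^{m-1}(\F_0))$ for $T_0$ small, using Moser-type commutator estimates for the $H^m$ bound and elliptic regularity for $p_\F\in N^{m+1}$, which recovers the regularities in \eqref{strongspace}; uniqueness then follows from a Gronwall argument on the difference of two solutions in the natural lower-order norm. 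The main obstacle is to keep the existence time $T_0$ uniform along the iteration although the domain, the map $\Phi_t$, the operator $\tn{M}_a$ and all the coefficients depend on the unknown; a related delicate point is the derivative loss in the transport estimates, which is what forces the vorticity formulation and careful commutator bookkeeping, and the geometric constraint $\B(t)\subset\subset\Omega$ on $[0,T_0)$ keeping $\Phi_t$ invertible is precisely what limits $T_0$.
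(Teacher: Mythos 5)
The paper does not actually prove this statement: Theorem \ref{thm:strong solution} is quoted from \cite[Theorem 1.3]{MT10}, so your decision to defer to that reference and merely outline a strategy matches the paper's treatment, and your outline contains no step that would fail. One comparison worth recording: the proof actually carried out in \cite{MT10} is not a Kato-type iteration on the vorticity-transport formulation. After fixing the fluid domain by a change of variables attached to the rigid motion and eliminating the pressure through the Neumann problem --- which produces exactly the added-mass structure you describe, with $\tn{M}+\tn{M}_a(t)$ symmetric and invertible --- the authors of \cite{MT10} rewrite the whole coupled system as an ordinary differential equation in a Banach space of Sobolev-regular (Lagrangian-type) unknowns, check that the right-hand side is locally Lipschitz (indeed smooth) there, and conclude by the abstract Cauchy--Lipschitz theorem, which yields existence, uniqueness and the regularity \eqref{strongspace} in one stroke; the derivative-loss issue you flag is absorbed by this reformulation rather than handled through vorticity transport and Moser commutator estimates. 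Your contraction scheme is a legitimate alternative route of the kind used for related rigid-body-in-ideal-fluid results, but since the theorem is imported rather than proved in the present paper, nothing here depends on which of the two arguments one prefers.
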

Our main goal is to show the weak-strong uniqueness property for system \eqref{fluidmotion:ineuler}-\eqref{bodymotion:ineuler}. More precisely, we want to prove the following result:
\begin{Theorem}\label{thm:main result}
Suppose $\Omega$ and $\mc{B}_0 \subset \subset \Omega$ are two regular bounded domains of $\R^3$. Let $\vV_0 \in \R^3$, $\vw_0 \in \R^3$ and $\vu_{\F_{0}} \in H^m(\F_0,\R^3)$ satisfy the compatibility conditions \eqref{cc1}-\eqref{cc3}. 

Let $(\B_2,\vu_{2 \mathcal{F}}, p_{2 \F} , \vV_{2} , \vw_{2})$ be a strong solution to \eqref{fluidmotion:ineuler}-\eqref{bodymotion:ineuler} on $(0,T_0)$ given by \cref{thm:strong solution} satisfying \eqref{strongspace} emanating from the initial data given by $\B_0, \vu_{\F_0}, \vV_0, \vw_0$. Let $(\B_1,Y_{t,\x},\mc{D})$ be a measure-valued solution to the system \eqref{fluidmotion:ineuler}-\eqref{bodymotion:ineuler} on $(0,T_0)$ emanating from the same initial data, i.e. $Y_{0,\x} = \delta_{\vV_0 + \vw_0 \times \x}$ for $\x \in \B_0$ and $Y_{0,\x} = \delta_{\vu_{\F_0}}$ for $\x \in \F_0$.

%Then there exists a constant $C$ depending on corresponding norms of $\vu_{2,\mathcal{F}}$, $p_{2,\F}$, $\xi$ such that 
%\begin{multline*}
%\left[\int_{\mc{F}} \langle Y_{t,\x},\frac{1}{2}|\vu_{1\mc{F}}-\vu_{2,\mathcal{F}}|^2\rangle\right](\tau)
%+ \left[\int_{\mc{B}} \frac{1}{2}\vr_\B |\vu_{1\mc{B}}-\vu_{2,\mathcal{B}}|^2\right](\tau) +\mc{D}(\tau)\\ \leq 
%C\int_0^{\tau}\Big[\int_{\mc{F}} \langle Y_{t,\x},\frac{1}{2}|\vu_{1\mc{F}}-\vu_{2,\mathcal{F}}|^2\rangle
%+ \int_{\mc{B}} \frac{1}{2}\vr_\B |\vu_{1\mc{B}}-\vu_{2,\mathcal{B}}|^2 +\mc{D}\Big](\cdot) + \int_{\mc{F}_0} \langle Y_{0,\x},\frac{1}{2}|\vu_{1\mc{F}}-\vu_{2,\mathcal{F}}|^2\rangle
%+ \left[\int_{\mc{B}_0} \frac{1}{2}\vr_\B |\vu_{1\mc{B}}-\vu_{2,\mathcal{B}}|^2\right](0).
%\end{multline*}
%In particular, if the initial states coincide, then for $t\in (0,T)$,
Then for $t \in (0,T_0)$
\begin{equation*}
\B_1(t) = \B_2(t),\ \ \mathcal{D}(t) = 0,\ \  Y_{t,\x}=\delta_{\vu_{2 \mathcal{F}(t,\x)}} \mbox{ on } \F_2(t) \mbox{ and } Y_{t,\x} = \delta_{\vV_2(t) + \vw_2(t) \times (\x-\X_2(t))} \mbox{ on } \B_2(t).
\end{equation*}

\end{Theorem}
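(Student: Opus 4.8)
The plan is to run a relative energy (relative entropy) argument adapted to the fluid--rigid body coupling. The strategy is to construct a relative energy functional that measures the distance between the measure-valued solution $(\B_1, Y_{t,\x}, \mc{D})$ and the strong solution $(\B_2, \vu_{2\F}, p_{2\F}, \vV_2, \vw_2)$, combining three contributions: the $L^2$-distance of the fluid velocity fields on the common fluid region, the kinetic-energy-type distance of the rigid body motions (translation plus rotation, weighted by $m$ and $\tn{J}$), and a term controlling the mismatch of the two body domains $\B_1(t)$ and $\B_2(t)$. A natural way to encode the last point, following the approach used for incompressible fluid--rigid body problems, is to extend the strong solution's velocity by the rigid velocity $\vu_{2\B} = \vV_2 + \vw_2 \times (\x - \X_2)$ inside $\B_2(t)$, so that one works with a single global divergence-free vector field $\vu_2$ on $\Omega$ (with $\vu_2 \cdot \vn = 0$ on $\partial\Omega$), and likewise to think of the Young measure $Y_{t,\x}$ as being supported, on $\B_1(t)$, on the rigid velocity field. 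The relative energy is then
\[
\mathcal{E}_{\rm rel}(t) = \frac{1}{2}\int_\Omega \int_{\R^3} |\bm{\xi} - \vu_2(t,\x)|^2 \, \mathrm{d}Y_{t,\x}(\bm{\xi}) \, \mathrm{d}\x + \mc{D}(t),
\]
where the integrand implicitly uses the appropriate mass density ($\vr_\B$ on the body parts, $1$ on the fluid parts); one has to be careful that the densities of the two solutions agree because $\vr_\B$ is transported and the initial body data coincide.

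The key steps, in order, would be: (i) write down the energy inequality satisfied by the measure-valued solution (part of Definition \ref{mvs}), which bounds $\frac12 \int \int |\bm{\xi}|^2 \mathrm{d}Y_{t,\x} + \mc{D}(t)$ by its initial value; (ii) use the strong solution as a test function in the weak formulation of the measure-valued solution --- this is where the momentum equations \eqref{fluidmotion:ineuler} and \eqref{bodymotion:ineuler} and the transport equation \eqref{eq:chiBeq} for the body indicator enter, and where the pressure terms and the boundary integrals on $\partial\B$ must be handled; (iii) use the strong solution's own equations (satisfied pointwise, thanks to \cref{thm:strong solution} and the regularity \eqref{strongspace}) to compute $\frac{\d}{\dt}\frac12\int_\Omega |\vu_2|^2$; (iv) combine (i)--(iii) to obtain a Gronwall-type inequality
\[
\mathcal{E}_{\rm rel}(t) \leq \mathcal{E}_{\rm rel}(0) + C\int_0^t \big(\|\nabla \vu_2\|_{L^\infty} + \text{body terms}\big)\, \mathcal{E}_{\rm rel}(s)\, \mathrm{d}s,
\]
in which the convective term $\int \int (\bm{\xi}-\vu_2)\otimes(\bm{\xi}-\vu_2) : \nabla \vu_2 \, \mathrm{d}Y_{t,\x}$ is absorbed into the right-hand side using the $L^\infty$ bound on $\nabla\vu_2$ and the fact that the Young measure's second moment dominates the relative term; (v) since $\mathcal{E}_{\rm rel}(0) = 0$ (the initial data coincide and $\mc{D}(0)=0$, with $Y_{0,\x}$ a Dirac mass), Gronwall forces $\mathcal{E}_{\rm rel}(t) \equiv 0$, whence $\mc{D}(t) = 0$, $Y_{t,\x}$ is the Dirac mass at $\vu_2(t,\x)$, and the two body domains coincide.

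The main obstacle I expect is the bookkeeping associated with the \emph{two distinct, moving body domains} $\B_1(t)$ and $\B_2(t)$: the test function $\vu_2$ is rigid on $\B_2(t)$ but not necessarily on $\B_1(t)$, and vice versa, so the ``rigidity'' structure that normally kills the pressure boundary integrals (via $\int_{\partial\B} p\,\vn = $ force, $\int_{\partial\B}(\x-\X)\times p\,\vn = $ torque) only applies on one of the two domains at a time. One must therefore carefully track the region $\B_1(t)\triangle\B_2(t)$, show that its contribution is itself controlled by $\mathcal{E}_{\rm rel}$ (using that the body trajectories are determined by ODEs \eqref{eq:etaODE} driven by velocities whose difference is controlled by the relative energy, so that $\eta_1[t]$ and $\eta_2[t]$ stay close in a quantitative, Gronwall-compatible way), and verify that the pressure terms arrange themselves so that no uncontrolled boundary contribution survives. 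A secondary technical point is justifying that $\vu_2$, extended by the rigid field and only of finite Sobolev regularity across the interface $\partial\B_2(t)$, is an admissible test function in the measure-valued formulation; this should follow from \eqref{strongspace} together with a density/mollification argument, but it needs to be stated.
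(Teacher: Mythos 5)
There is a genuine gap, and it sits exactly where you flag your ``main obstacle'': your comparison field $\vu_2$ (the strong fluid velocity extended by the rigid field inside $\B_2(t)$) is not an admissible test function in the measure-valued formulation, and nothing in your outline repairs this. The test class $V_\tau$ of Definition~\ref{mvs} requires the test function to be \emph{rigid on $\B_1(t)$} (the body of the measure-valued solution) and smooth divergence-free on $\F_1(t)$, with only the normal traces matching on $\partial\B_1(t)$; your $\vu_2$ is rigid on $\B_2(t)$, not on $\B_1(t)$, and moreover it is genuinely discontinuous (in the tangential components) across $\partial\B_2(t)$, since for the Euler system only $\vu_{2\F}\cdot\vn=\vu_{2\B}\cdot\vn$ holds on the interface. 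That interface lies, in general, inside the fluid region $\F_1(t)$ of the measure-valued solution, so $\nabla\vu_2$ is not in $L^\infty$ there (it carries a surface measure), and the Gronwall absorption step ``$\int\langle Y_{t,\x},(\xi-\vu_2)\otimes(\xi-\vu_2)\rangle:\nabla\vu_2 \le C\|\nabla\vu_2\|_{L^\infty}\mathcal{E}_{\rm rel}$'' fails precisely on the set $\B_1(t)\triangle\B_2(t)$ and near $\partial\B_2(t)$, which is where the two solutions must be compared. Your suggestion to control the symmetric difference directly would require spatial regularity or trace information on the measure-valued solution that simply is not available (it is only an $L^2$-type Young measure plus a defect), and the compatibility condition \eqref{cc:euler} additionally forces the fluid test function to be $C^1(\overline{\F_1(\tau)})$, which a mollified version of $\vu_2$ does not satisfy uniformly in a way controlled by the relative energy.

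The paper resolves this by a different device that your proposal is missing: an Inoue--Wakimoto/Takahashi-type change of variables $\widetilde{\bZ}_2$ (rigid near the bodies, identity near $\partial\Omega$) that maps $\B_1(t)$ onto $\B_2(t)$, and the strong solution is pulled back to the geometry of the measure-valued solution, producing $(\vU^s_\F,\vU^s_\B,P^s_\F)$ with $\vU^s_\B$ rigid exactly on $\B_1(t)$ --- hence an admissible comparison object. The price is a forcing term $\mathbf{F}$ in \eqref{eq:fluidtransformed} and Coriolis-type corrections in \eqref{eq:newrigidtransformed1}--\eqref{eq:newrigidtransformed2}, but Lemmas~\ref{l:31}--\ref{l:32} show these are controlled by $\|\vV_1-\vV^s\|_{L^2(0,t)}+\|\vw_1-\vw^s\|_{L^2(0,t)}$, i.e.\ by the relative energy itself, which is what makes the Gronwall loop close; the paper also truncates time at $T_{min}$ in \eqref{eq:tmin} to rule out collision of $\B_1$ with $\partial\Omega$ during the argument. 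Finally, note that even after Gronwall one only gets $Y_{t,\x}=\delta_{\vU^s_\F}$, $\vu_{1\B}=\vU^s_\B$, i.e.\ coincidence with the \emph{transformed} solution; an additional ODE argument on the rotation matrices ($\tn{O}_1^T\frac{\d\tn{O}_1}{\dt}=\tn{O}_2^T\frac{\d\tn{O}_2}{\dt}$ with equal initial data, hence $\tn{O}_1=\tn{O}_2$) is needed to conclude $\B_1=\B_2$, $\vV_1=\vV_2$, $\vw_1=\vw_2$ and to undo the transformation. Your outline, as it stands, neither provides the transformation nor a workable substitute for it, so the central estimate cannot be closed.
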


%\textcolor{blue}{Here, $\mathcal{D}$ represents the so-called \textit{concentration defect measure}, encoding possible concentration (and oscillations) of the solutions (see Section 2, and Definition \ref{mvs}).}

The existence theory of weak and strong solutions for systems describing the motion of rigid body in {\it  a viscous incompressible} fluid was studied by many authors. For introduction to the problem of fluid coupled with rigid body see \cite{G2}, \cite{SER3}. Let us mention that first results on the existence of weak solutions until first collision go back to the works of  Conca, Starovoitov and Tucsnak \cite{CST}, Desjardins and Esteban \cite{DEES1}, Gunzburger, Lee and Seregin \cite{GLSE}, Hoffman and Starovoitov \cite{HOST}. Further, the possibility of collision in case of weak solution has been done in the works of \cite{SST},  \cite{F3}.
Let us also mention existence results on strong solutions, see e.g. \cite{T}, \cite{Wa}, \cite{GGH13}.

The case of the motion of rigid body in {\it an inviscid incompressible}  fluid is more complex  and we can expect that all problems which appear just in fluid alone must appear also there. Let us mention what is known.   
The case of  a smooth initial data with finite kinetic energy a problem has been investigated, see \cite{13}.  The case of Yudovich-like solutions (with bounded vorticities) was studied by O. Glass and F. Sueur,  see \cite{5}. The study with initial vorticity of the fluid belonging to  a $L^p_c$ vorticity, $p>2$ and the index $c$ is used here and in the sequel for “compactly supported” can be found in work of O. Glass, C. Lacave, F. Sueur, see \cite{4}. These works provided the global existence of solutions.  The result of \cite{13} was extended to the case of a solid of arbitrary form for which rotation has to be taken into account, see \cite{12}. The works \cite {5} and \cite {4} deal with an arbitrary form as well. Furthermore let us stress that  in \cite{6} the case of an initial vorticity in $L^p_c$ with $p>1$ was studied  in order to achieve the investigation of solutions “à la DiPerna–Majda”, referring here to the seminal work \cite{2} in the case of a fluid alone.
A famous result by Delort \cite{1} about the two-dimensional incompressible Euler equations is the existence of weak solutions when the initial vorticity is a bounded Radon measure with distinguished sign and lies in the Sobolev space $H^{-1}$. In paper by  F. Sueur \cite{S}  he was interested in the case where a rigid body immersed in the fluid is moving under the action of the fluid pressure. They proved the existence of solutions "à la Delort" in a particular case with a mirror symmetry assumption. Let us also mention uniqueness result of solution "a la Yudovic type" by Glass and Sueur, see \cite{GlassSueur}.

The aim of our first main theorem is to follow DiPerna, Majda \cite{2} approach to get the existence of measure-valued solution of the coupled system. In our knowledge, the concept of measure-valued solution in the case of fluid-structure interaction problem is new in the literature. The idea is to view this "inviscid incompressible + rigid body" problem as a vanishing viscosity limit of "viscous incompressible + rigid body" problem.  

Second part of our paper is devoted to the so-called weak-strong uniqueness but in more general framework of {\it measure-valued sense}. The weak-strong uniqueness is based on the  concept of relative entropy (or energy) inequality introduced  already by Dafermos \cite{D}. Our motivation goes back to work of  Y. Brenier, C. De Lellis, L. Székelyhidi \cite {BDS},  S. Demoulini, D. M. A. Stuart, A. E. Tzavaras \cite{DST} or Wiedemann \cite{W}. Let us mention the references that deal with the weak-strong uniqueness result for fluid-structure interaction problem. A weak-strong uniqueness result
for a motion on rigid body in an incompressible fluid has been shown recently in 2D case, see \cite{bravin2018weak, GlassSueur}, and in 3D case see,  \cite{CNM}, \cite{MNR}. Similar result was proved for the case of rigid body inside a compressible viscous fluid by Kreml et al. \cite{KrNePi_2}. In a case of rigid body with a cavity filled by  incompressible fluid the weak-strong uniqueness was shown in \cite{Disser2016}. For an analogous result for a cavity filled by compressible fluid see \cite{GMN}. We also refer to \cite{Dobo}, \cite{KrNePi} for problems on moving domains. But in these previous mentioned references regarding weak-strong uniqueness for fluid-structure interaction problem, the authors have always taken the motion of rigid body in a viscous fluid. In this article, we want to explore the case of "inviscid incompressible + rigid body". Here we have established the uniqueness result by suitably defining the relative energy functional and by estimating it via the energy inequalities of strong and measure-valued solution.
%We would like to show a weak-strong uniqueness  in the sense measure-valued - strong uniqueness result. 

The outline of the paper is as follows. In Section \ref{s:existence} we define the measure-valued solution of the system \eqref{fluidmotion:ineuler}-\eqref{bodymotion:ineuler}. In this section, we establish Theorem \ref{thm:measure-euler} via replacing inviscid fluid by viscous fluid (with suitable boundary conditions) and by proving the viscosity limit of the sequence of weak solutions to "viscous incompressible + rigid body" system converges to a measure-valued solution for the system \eqref{fluidmotion:ineuler}-\eqref{bodymotion:ineuler}. Section \ref{s:wsu} is devoted to the proof of Theorem \ref{thm:main result}. In this section, firstly, we have done a change of variable for strong solution so that we can compare it with the measure-valued solution in the same domain. Secondly, we derived the energy estimate for the transformed strong solution. Finally, we have established weak-strong uniqueness in  \cref{thm:main result} with the help of proper estimate of relative energy functional. In Section \ref{s:appendix} we provide more details about derivation of several identities which are useful throughout the proofs of our main theorems.

%%%INCOMPRESSIBLE FLUIDS%%%
\section{Existence of measure-valued solutions}\label{s:existence}

We want to derive a suitable formulation for the measure-valued solution of the system \eqref{fluidmotion:ineuler}-\eqref{bodymotion:ineuler} with the help of appropriate test functions. Due to the presence of this Navier-slip boundary condition, the test functions will also be discontinuous across the interface of fluid-solid. Let us first introduce the space of rigid velocity fields:
\begin{equation*}
\mc{R}=\left\{ \vphi_{\mc{B}} \mid \vphi_{\mc{B}}(\x)=\ell+\omega \times \x,\mbox{ for some }\ell \in \mathbb{R}^3,\, \omega\in \mathbb{R}^3\right\}.
\end{equation*}
For any $T>0$, we define the test function space $V_{T}$ as:
\begin{equation*}
V_{T}=
\left\{\!\begin{aligned}
&\vphi \in C([0,T]; L^2_{\sigma}(\Omega)),\mbox{ there exists }\vphi_{\mc{F}}\in \mc{D}([0,T]; \mc{D}_{\sigma}(\Omega)),\, \vphi_{\mc{B}}\in \mc{D}([0,T]; \mc{R})\\ &\mbox{ such that }\vphi(t,\cdot)=\vphi_{\mc{F}}(t,\cdot)\mbox{ on }\mc{F}(t),\quad \vphi(t,\cdot)=\vphi_{\mc{B}}(t,\cdot)\mbox{ on }\mc{B}(t)\mbox{ with }\\ &\vphi_{\mc{F}}(t,\cdot)\cdot \vc{n} = \vphi_{\mc{B}}(t,\cdot)\cdot \vc{n} \mbox{ on }\partial\mc{B}(t),\ \vphi_{\mc{F}}\cdot \vc{n}=0 \mbox{ on }\partial\Omega\mbox{ for all }t\in [0,T]
\end{aligned}\right\}.
\end{equation*}
%. {\color{red} Here in the test function space, two things to think about:
%. \begin{enumerate}
%. \item $\vphi_{\mc{S}}(t,\cdot)\cdot \vc{n} \mbox{ on }\partial\mc{S}(t),\ \vphi_{\mc{F}}\cdot \vc{n}=0 \mbox{ on }\partial\Omega$ is needed or not.
%. \item  Whether we need to include $\tn{D}(\vphi)=0\ \ \mbox{on an open neighbourhood of}\ \overline{Q_{\B}}$.
%. \end{enumerate}}

Now, let us mention some basics on Young measures. Let $L^\infty_{weak-*}(Q_T;\mathcal{P}(\R^3))$ be the space of essentially bounded weakly-$*$ measure maps $Y:Q_T\rightarrow \mathcal{P}(\R^3)$, $(t,\x)\mapsto Y_{t,\x}$, where the notation $\mathcal{P}(\R^3)$ denotes the space of probability measures on $\R^3$. By virtue of fundamental theorem on Young measures, there exists a subsequence of $\{\mathbf u^{\varepsilon}\}_{\varepsilon>0}$ and parameterized family of probability measures $\{Y_{t,\x}\}_{(t,\x)\in Q_T}$
\begin{eqnarray*}
[(t,\x)\mapsto Y_{t,\x}]\in L^\infty_{weak-*}(Q_T;\mathcal{P}(\R^3)),
\end{eqnarray*}
called \textit{Young measure} associated to the sequence
$\{\mathbf u^{\varepsilon}\}_{\varepsilon>0}$, such that a.a. $(t,\x)\in Q_T$
\begin{eqnarray*}
\langle Y_{t,\x}; G(\mathbf u)\rangle=\widehat{G(\mathbf u)}(t,\x) \ \mbox{for any} \ G\in C_c(\Omega), \ \mbox{and} \ \mbox{a.a.} \ (t,\x)\in Q_T,
\end{eqnarray*}
whenever
\begin{eqnarray*}
G(\mathbf u^{\varepsilon})\rightarrow\widehat{G(\mathbf u)}(t,\x) \ \mbox{weakly-}* \ \mbox{in} \ L^\infty(Q_T).
\end{eqnarray*}
Above, the hat over a function is intended as weak limit.
If $G\in C(\Omega)$ is such that
\begin{eqnarray*}
\int^T_0\int_\Omega |G(\mathbf u^{\varepsilon})|\leq C,
\end{eqnarray*}
then $G$ is $Y_{t,\x}$ - integrable for almost all $(t,\x)\in Q_T$ and
\begin{eqnarray*}
[(t,\x)\mapsto \langle Y_{t,\x}; G(\mathbf u)\rangle]\in L^1(Q_T),
\end{eqnarray*}
and
\begin{eqnarray*}
G(\mathbf u^{\varepsilon})\rightarrow\widehat{G(\mathbf u)}(t,\x) \ \mbox{weakly-}* \ \mbox{in} \ \mathcal{M}(Q_T).
\end{eqnarray*}
Here, $\mathcal{M}(Q_T)$ denotes the space of signed measures on $Q_T = (0,T) \times \Omega$. Note that the Young measure $[(t,\x)\mapsto \langle Y_{t,\x}; G(\mathbf u)\rangle]$ is a parameterized family of non-negative measures acting on the phase space $\R^3$, while $\widehat{G(\mathbf u)}(t,\x)$ is a signed measure on the physical space $(0,T)\times \Omega$.
In conclusion, the difference
\begin{eqnarray*}
\mu_G\equiv\widehat{G(\mathbf u)}-[(t,\x)\mapsto \langle Y_{t,\x}; G(\mathbf u)\rangle]\in \mathcal{M}(Q_T),
\end{eqnarray*}
is called \textit{concentration defect measure}.

As it is quite common in works dealing with interactions between fluids and rigid bodies, one can combine the momentum equation for the fluid velocity with the equations for the velocities of the body into one weakly formulated equation. We provide the relevant calculations in Section \ref{App-Euler} and this motivates us to introduce the measure-valued solution to the system \eqref{fluidmotion:ineuler}-\eqref{bodymotion:ineuler} as follows.

%Motivated by the relation obtained in \eqref{weak-momentum} (detailed calculations are in \cref{App-Euler}) via combining momentum equation \eqref{fluidmotion:ineuler}$_2$ on  with the balance equations for the translation and angular velocities of the body, we introduce the concept of measure valued solution to the "incompressible Euler $+$ rigid body" system.
\begin{Definition} \label{mvs}
Let $T > 0$, $\Omega$ and $\B_0 \subset \Omega$ be bounded domains and let $Y_{0,\x} \in L^\infty_{weak-*}(\Omega;\mathcal{P}(\R^3))$ such that
\begin{equation}\label{bodyYoung00}
Y_{0,\x} = \delta_{\vV_0 + \vw_0 \times \x} \qquad \text{ for all } \x \in \mc{B}_0
\end{equation}
for some $\vV_0,\vw_0 \in \R^3$.

We say that a triplet $(\B,Y_{t,\x}, \D)$
%(Young measure $\{Y_{t,\x}\}_{t\in[0,T],x\in\Omega}$ together with rigid body ${\B}$) 
represents a measure-valued solution for the system \eqref{fluidmotion:ineuler}-\eqref{bodymotion:ineuler} on the set $(0,T)\times\Omega$ with the initial data $\B_0$, $Y_{0,\x}$, if the following holds
\begin{enumerate}
\item $\mc{B}(t) \subset \Omega$ is a bounded domain of $\mathbb{R}^3$ for all $t\in [0,T)$ such that
\begin{equation}\label{body}
\chi_{\mc{B}}(t,\x) = \mathds{1}_{\mc{B}(t)}(\x) \in L^{\infty}((0,T) \times \Omega)
\end{equation}
and there exists a family of isometries $\eta[t]$ of $\R^3$ such that \eqref{eq:isom1}-\eqref{eq:isom2} are satisfied.

\item $Y_{t,\x} \in L^\infty_{weak-*}((0,T)\times\Omega;\mathcal{P}(\R^3))$ such that 
\begin{equation}\label{bodyYoung}
Y_{t,\x} = \delta_{\vu_{\mathcal{B}}(t,\x)} \qquad \text{ for all } (t,\x) \in Q_{\mc{B}},
\end{equation}
where $\vu_{\mc{B}}(t,\x) = \vV(t) + \vw(t)\times (\x-\X(t))$, \eqref{eq:QO}-\eqref{eq:Qw} are satisfied and $\X(t)$ and $\tn{O}(t)$ are absolutely continuous on $[0,T]$. 

\item The dissipation defect 
$\mathcal{D} \in L^\infty(0,T), \ \mathcal{D}\geq0$.

\item The continuity equation is satisfied as follows: for any test function  $\varphi \in C^\infty_c([0,\tau)\times \overline{\Omega})$, $\tau \in [0,T]$
\begin{equation}\label{solenoidality}
 \int_0^\tau\int_{\mc{F}(t)} \langle Y_{t,\x},\vu_{\mc{F}}\rangle \cdot  \nabla \varphi= \int_0^\tau\int_{\partial \mc{B}(t)}   \vu_{\mc{B}}\cdot \vc{n} \, \varphi.
\end{equation}

\item The momentum equation is satisfied as follows: for any test function $\vphi \in V_\tau$, $\tau\in [0,T]$
\begin{multline}\label{momentum:euler}
- \int_0^\tau\int_{\mc{F}(t)} \langle Y_{t,\x},\vu_{\mc{F}}\rangle\cdot {\partial_ t}\vphi_{\mc{F}} - \int_0^\tau\int_{\mc{B}(t)} \vr_\B \vu_{\mc{B}}\cdot {\partial_ t}\vphi_{\mc{B}} - \int_0^\tau\int_{\mc{F}(t)} \langle Y_{t,\x},(\vu_{\mc{F}} \otimes \vu_{\mc{F}}) : \nabla \vphi_{\mc{F}}\rangle
 \\ =  \int_{\mc{F}_0}\langle Y_{0,\x}, \vu_{\mc{F}}\rangle\cdot \vphi_{\mc{F}}(0)
 -\int_{\mc{F}_\tau}\langle Y_{\tau,\x}, \vu_{\mc{F}}\rangle\cdot \vphi_{\mc{F}}(\tau)
 + \int_{\mc{B}_0} (\vr_\B \vu_{\mc{B}}\cdot \vphi_{\mc{B}})(0)
 - \int_{\mc{B}_\tau} (\vr_\B \vu_{\mc{B}}\cdot \vphi_{\mc{B}})(\tau)
 + \int_{0}^\tau \langle \mu^M_{D}, \nabla \vphi_{\mc{F}}\rangle
\end{multline}
with some measure $\mu_D^M \in L^1(0,T;\mathcal{M}(\F_t))$.
\item The energy inequality
\begin{equation}\label{energy:euler}
\int_{\mc{F}(\tau)} \langle Y_{\tau,\x},\frac{1}{2}|\vu_{\mc{F}}|^2\rangle + \int_{\mc{B}(\tau)} \frac{1}{2}\vr_\B |\vu_{\mc{B}}|^2
+\mathcal{D}(\tau) \leq E_{0},
\end{equation}
holds for a.a. $\tau \in [0,T]$ with $E_{0}=\int_{\mc{F}_0} \langle Y_{0,\x},\frac{1}{2}|\vu_{\mc{F}}|^2\rangle + \int_{\mc{B}_0} \frac{1}{2}(\vr_\B |\vu_{\mc{B}}|^2) |_{t=0}$.
%\item the velocity $\vu$ is compatible with $\{\overline{\B},\eta\}$ and the functions $\eta[t]: \R^3 \mapsto \R^3$ are isometries;
\item The following compatibility condition holds: There exists $\xi\in L^1(0,T)$ such that
\begin{equation}\label{cc:euler}
|\langle \mu_{D}^{M}(\tau),\nabla \vphi_\F \rangle| \leq \xi(\tau)\mathcal{D}(\tau)\|\vphi_\F\|_{C^1(\overline{\F_\tau})}
\end{equation}
for a.a. $\tau \in (0,T)$ and every $\vphi \in V_\tau$.
\end{enumerate}
\end{Definition}

\begin{Remark}
Points $1.$ and $2.$ of Definition \ref{mvs} imply in particular that the rigid body $\mc{B}$ is transported by the rigid vector field $\vu_{\mc{B}}$, which can be written in the weak sense as follows: for all $\psi\in C^\infty_c([0,\tau)\times \overline{\Omega})$, $\tau \in [0,T]$ both of the following identities hold
\begin{equation}\label{bodytransport}
-\int_{0}^\tau\int_{\mc{B}(t)} \partial_t \psi - \int_{0}^\tau\int_{\mc{B}(t)}\vu_{\mc{B}}\cdot \nabla \psi = \int_{\mc{B}_0}(\psi) |_{t=0}.
\end{equation}
\begin{equation}\label{bodytransport2}
-\int_{0}^\tau\int_{\mc{B}(t)} \vr_\B \partial_t \psi - \int_{0}^\tau\int_{\mc{B}(t)}\vr_\B \vu_{\mc{B}}\cdot \nabla \psi = \int_{\mc{B}_0}(\vr_\B \psi) |_{t=0}.
\end{equation}
The opposite implication holds as well, namely the fact, that the rigid body $\B$ is transported by a rigid motion $\vu_\B$ such that \eqref{bodytransport} holds implies the existence of a family of isometries $\eta[t]$ with properties \eqref{eq:isom1}-\eqref{eq:isom2} related to the rigid velocity $\vu_\B$ through \eqref{eq:QO}-\eqref{eq:Qw}. For more details we refer to \cite{GH2}.
\end{Remark}

\begin{Remark}
We also note here that for any $\tau \in [0,T]$
\begin{equation}\label{eq:energy_body}
 \frac12 \int_{\B(\tau)} \vr_\B |\vu_{\B}|^2 = \frac m2 |\vV(\tau)|^2 + \frac 12 \mathbb{J}(\tau)\vw(\tau)\cdot\vw(\tau) \geq c\left(|\vV(\tau)|^2 + |\vw(\tau)|^2\right)
\end{equation}
for some constant $c > 0$ which is independent of time.
\end{Remark}
\begin{Remark}
Let us mention that point $2.$ of Definition \ref{mvs} introduces a generalized version of compatibility between the family of isometries $\eta[t]$ and the Young measure $Y_{t,\x}$ in the sense of Feireisl \cite{F3}.% = \delta_{\vu_{\mc{B}}(t,\x)}$  on $\mc{B}(t)$ in the sense that
% $\vu_{\mc{B}}(t,\x) = \vV(t) + \vw(t)\times (\x-\X(t))$  and \eqref{eq:QO}-\eqref{eq:Qw} are satisfied.
\end{Remark}

%. {\color{red}Question:
%. \begin{enumerate}
%. \item In the transport of rigid body equation, should we write using the measures $Y_{t,\x}$?
%\item Do we need version of "Poincar\'{e} Inequality" in the Definition?
%\item $\int_{0}^\tau \langle \mu^M_{D}, \nabla \vphi\rangle$ or $\int_{0}^\tau \langle \mu^M_{D}, \nabla \vphi_{\mc{F}}\rangle$ ?
%. \end{enumerate}
%. }
In order to establish the existence result \cref{thm:measure-euler} for the system \eqref{fluidmotion:ineuler}-\eqref{bodymotion:ineuler}, we first introduce the  following system by replacing Euler equations by Navier-Stokes equations (with suitable boundary conditions):
\begin{equation}\label{fluidmotion:inNSE}
\left\{
\begin{array}{ccc}
\partial_{t} \vu^{\varepsilon}_{\mathcal{F}} + (\vu^{\varepsilon}_{\mathcal{F%
}} \cdot \nabla) \vu^{\varepsilon}_{\mathcal{F}} - \operatorname{div}\sigma(\vu^{\varepsilon}_{\mc{F}},p^{\varepsilon}_{\mathcal{F}}) = 0,\quad \mathrm{div} \vu^{\varepsilon}_{\mathcal{F}}=0 & \mathrm{in} & {Q}_{%
\mathcal{F}^\varepsilon},\\
\vu^{\varepsilon}_{\mathcal{F}} \cdot \vc{n} = 0 & \mathrm{on} & (0,T) \times \partial\Omega , \\
\vu^{\varepsilon}_{\mathcal{F}} \cdot \vc{n} = \vu^{\varepsilon}_{\mathcal{B}} \cdot \vc{n}
&\mathrm{on} & Q_{\partial
\mathcal{B}^\varepsilon}, \\
\sigma(\vu^\varepsilon_\F,p^\varepsilon_\F)\vc{n} \times \vc{n} = 0 & \mathrm{on} &  (0,T) \times \partial\Omega \\
\sigma(\vu^\varepsilon_\F,p^\varepsilon_\F)\vc{n} \times \vc{n} = 0 & \mathrm{on} & Q_{\partial
\mathcal{B}^\varepsilon} \\
\displaystyle
m\frac{\d}{\dt}\vV^{\varepsilon}(t) = -\int_{\partial\mathcal{B}^\varepsilon(t)} \sigma(\vu^{\varepsilon}_{\mc{F}},p^{\varepsilon}_{\mathcal{F}}) \vc{n}\ \d S\quad&
 \text{ in } &(0,T), \\ \displaystyle
\tn{J}(t)\frac{\d}{\dt}\vw^{\varepsilon}(t) = \tn{J}(t)\vw^{\varepsilon}(t)\times\vw^{\varepsilon}(t) - \int_{\partial\mathcal{B}^\varepsilon(t)} (\x-\X^\varepsilon(t)) \times \sigma(\vu^{\varepsilon}_{\mc{F}},p^{\varepsilon}_{\mathcal{F}}) \vc{n}\ \d S\quad &\text{ in }& (0,T),\\
\vu^{\varepsilon}_{\mathcal{F}}(0)= \vu_{\F_{0}} & \mathrm{in} & \mathcal{F%
}_{0},\\
\vV^{\varepsilon}(0)=\vV_{0},\qquad \vw^{\varepsilon}(0)= \vw_{0}.
\end{array}%
\right.
\end{equation}
In the above, $\vu^{\varepsilon}_{\mathcal{F}}$ denotes the velocity of the fluid, the scalar function $p^{\varepsilon}_{\mc{F}}$ is the pressure, and the velocity of the rigid body is given by
\begin{equation*}
\vu_{\mathcal{B}}^\varepsilon(t,\x)=\vV^\varepsilon(t) + \vw^\varepsilon(t)\times (\x-\X^\varepsilon(t))
\end{equation*}
for any $(t,\x) \in Q_{\B^\varepsilon}$, we also denote $\vu_{\B_0} = \vV_0 + \vw_0 \times \x$. The Cauchy stress tensor $\sigma(\vu^{\varepsilon}_{\mc{F}},p^{\varepsilon}_{\mathcal{F}})$ is given by
\begin{equation*}
\sigma(\vu^{\varepsilon}_{\mc{F}},p^{\varepsilon}_{\mathcal{F}})=2\varepsilon \mathbb{D}(\vu^{\varepsilon}_{\mc{F}}) -p^{\varepsilon}_{\mc{F}}\mathbb{I} \mbox{ with }
\mathbb{D}(\vu^{\varepsilon}_{\mc{F}})=\frac{1}{2}\left(\nabla \vu^{\varepsilon}_{\mc{F}} + {\nabla \vu^{\varepsilon}_{\mc{F}}}^{\top}\right).
\end{equation*}
At first, we want to introduce the weak solution to the system \eqref{fluidmotion:inNSE}.
\begin{Definition}
Let $T> 0$, $\Omega$ and $\mc{B}_0 \subset \Omega$ be two Lipschitz bounded domains of $\mathbb{R}^3$.  A couple $(\mc{B}^{\varepsilon},\vu^{\varepsilon})$ is a weak solution to system \eqref{fluidmotion:inNSE} with initial data $\vu_{\F_0}$, $\vV_0$, $\vw_0$ if the following holds:
\begin{enumerate}
\item $\mc{B}^{\varepsilon}(t) \subset \Omega$ is a bounded domain of $\mathbb{R}^3$ for all $t\in [0,T)$ such that
\begin{equation}\label{body:NS}
\chi^{\varepsilon}_{\mc{B}}(t,x) = \mathds{1}_{\mc{B}^{\varepsilon}(t)}(x) \in L^{\infty}((0,T) \times \Omega).
\end{equation}
\item $\vu^{\varepsilon}=(1-\chi^{\varepsilon}_{\mc{B}})\vu^{\varepsilon}_{\mc{F}} + \chi^{\varepsilon}_{\mc{B}}\vu^{\varepsilon}_{\mc{B}}$ belongs to the following space
\begin{equation*}
U_{T}=
\left\{\!\begin{aligned}
&\vu^{\varepsilon} \in L^{\infty}(0,T; L^2_{\sigma}(\Omega)),\mbox{ there exists }\vu^{\varepsilon}_{\mc{F}}\in L^2 (0,T; H^1_{\sigma}(\Omega)),\, \vu^{\varepsilon}_{\mc{B}}\in L^{\infty}(0,T; \mc{R})\\ &\mbox{ such that }\vu^{\varepsilon}(t,\cdot)=\vu^{\varepsilon}_{\mc{F}}(t,\cdot)\mbox{ on }\mc{F}^{\varepsilon}(t),\quad \vu^{\varepsilon}(t,\cdot)=\vu^{\varepsilon}_{\mc{B}}(t,\cdot)\mbox{ on }\mc{B}^{\varepsilon}(t)\mbox{ with }\\ &\vu^{\varepsilon}_{\mc{F}}(t,\cdot)\cdot \vc{n} = \vu^{\varepsilon}_{\mc{B}}(t,\cdot)\cdot \vc{n} \mbox{ on }\partial\mc{B}^{\varepsilon}(t),\ \vu^{\varepsilon}_{\mc{F}}\cdot \vc{n}=0 \mbox{ on }\partial\Omega\mbox{ for all }t\in [0,T]
\end{aligned}\right\}.
\end{equation*}
\item The transport of $\mc{B}^{\varepsilon}$ by the rigid vector field $\vu_{\mc{B}} ^{\varepsilon}$:
\begin{equation}\label{bodytransport:NS}
%\frac{\partial \vr_{\B}^{\varepsilon}}{\partial t} + \operatorname{div}(\vr_{\B}^{\varepsilon}\vu_\B^{\varepsilon} ) = 0 %\quad \mbox{ in }\Omega \mbox{ with }  \vr^{\varepsilon}_{\mc{B}}|_{t=0}= \mathds{1}_{\mc{B}_0}
\frac{\partial \chi_{\B}^{\varepsilon}}{\partial t} + \operatorname{div}(\vu_\B^{\varepsilon} \chi_{\B}^{\varepsilon}) = 0 \quad \mbox{ in }\R^3 \mbox{ with }  \chi^{\varepsilon}_{\mc{B}}|_{t=0}= \mathds{1}_{\mc{B}_0}.
\end{equation}
%where the orthogonal projection over rigid field, $P^{\varepsilon}_{\mc{B}}:L^2(\Omega)\rightarrow L^2(\mc{B}^{\varepsilon}(t))$ is such that $\forall \ t\in [0,T]$ and $\vu\in L^2_{\sigma}(\Omega)$, we have $P^{\varepsilon}_{\mc{B}}\vu \in \mc{R}$.
is satisfied in the weak sense: for all $\psi\in C^\infty_c([0,\tau)\times \overline{\Omega})$, $\tau \in [0,T]$
\begin{equation}
-\int_{0}^\tau\int_{\mc{B}^{\varepsilon}(t)} %\vr_\B^\varepsilon 
\partial_t \psi - \int_{0}^\tau\int_{\mc{B}^{\varepsilon}(t)} %\vr_\B^\varepsilon 
\vu^{\varepsilon}_{\mc{B}}\cdot \nabla \psi = \int_{\mc{B}_0} %\vr_\B 
\psi |_{t=0}.
\end{equation}
\item Balance of linear momentum holds in a weak sense, i.e, for all  $\vphi \in V_{T}$ and $\tau \in [0,T]$ 

\begin{multline}\label{momentum:NS}
- \int_0^\tau\int_{\mc{F}^{\varepsilon}(t)} \vu_{\mc{F}}^{\varepsilon}\cdot \partial_t\vphi_{\mc{F}} - \int_0^\tau\int_{\mc{B}^{\varepsilon}(t)} \vr_\B^\varepsilon \vu_{\mc{B}}^{\varepsilon}\cdot \partial_t\vphi_{\mc{B}} - \int_0^\tau\int_{\mc{F}^{\varepsilon}(t)} (\vu_{\mc{F}}^{\varepsilon} \otimes \vu_{\mc{F}}^{\varepsilon}) : \nabla \vphi_{\mc{F}} + 2\varepsilon\int_0^\tau\int_{\mc{F}^{\varepsilon}(t)} \mathbb{D}(\vu_{\mc{F}}^{\varepsilon}):\mathbb{D}(\vphi_{\mc{F}})\\
  = \int_{\mc{F}_0} \vu_{\mc{F}_0}\cdot \vphi_{\mc{F}}(0) 
  -\int_{\mc{F}^\varepsilon(\tau)} \vu_{\mc{F}}^\varepsilon\cdot \vphi_{\mc{F}}(\tau)
  + \int_{\mc{B}_0} \vr_\B \vu_{\mc{B}_0}\cdot \vphi_{\mc{B}}(0)
  - \int_{\mc{B}^\varepsilon(\tau)} \vr_\B^\varepsilon \vu_{\mc{B}}^\varepsilon\cdot \vphi_{\mc{B}}(\tau).
\end{multline}
\item The energy inequality holds for a.e $\tau\in [0,T]$:
\begin{equation}\label{energy:NS}
\int_{\mc{F}^{\varepsilon}(t)}\frac{1}{2} |\vu_{\mc{F}}^{\varepsilon}(\tau)|^2 + \int_{\mc{B}^{\varepsilon}(t)} \frac{1}{2} \vr_\B^\varepsilon|\vu_{\mc{B}}^{\varepsilon}(\tau)|^2 + 2\varepsilon\int_0^\tau\int_{\mc{F}^{\varepsilon}(t)} |\mathbb{D}(\vu_{\mc{F}}^{\varepsilon})|^2 \leq \frac12 \int_{\mc{F}_0} |\vu_{\mc{F}_0}|^2
   + \frac12 \int_{\mc{B}_0} \vr_\B|\vu_{\mc{B}_0}|^2.
\end{equation}
\end{enumerate}
  \end{Definition}
\begin{Remark}
The density of the body $\vr_\B^\varepsilon$ in \eqref{momentum:NS} is naturally solution of 
\begin{equation}\label{eq:densityNS}
    \partial_t \vr_\B^\varepsilon + \Div (\vr_\B^\varepsilon \vu_\B^\varepsilon) = 0 \qquad \mbox{ in } Q_{\B^\varepsilon}
\end{equation}
with initial data given by $\vr_\B(0,\x)$.
\end{Remark}

We already know the following existence theorem due to \cite[Theorem 2.2]{GH2}:
\begin{Theorem}\label{t:GVH}
Let $\Omega$ and $\mc{B}_0 \subset \subset \Omega$ be two smooth domains of $\R^3$. Let $\vu_{\mc{F}_0} \in L^2_{\sigma}(\Omega)$ and $\vu_{\mc{B}_0} \in \mc{R}$ be such that $\vu_{\mc{F}_0}\cdot \vc{n}=\vu_{\mc{B}_0} \cdot \vc{n}$ on $\partial\mc{B}_0$. There exists $T\in \R^*_{+} \cup \infty$ and a weak solution $(\mc{B}^{\varepsilon},\vu^{\varepsilon})$ to \eqref{fluidmotion:inNSE} on $[0,T)$. Moreover, such a weak solution exists up to collision, i.e, either we can take $T=\infty$ or we can take $T>0$ such that
\begin{equation*}
    \mc{B}^\varepsilon(t) \subset \subset \Omega \mbox{ for all }t\in [0,T),\quad \lim_{t\rightarrow T}\mbox{dist}(\mc{B}^\varepsilon(t), \partial\Omega)=0.
\end{equation*}
\end{Theorem}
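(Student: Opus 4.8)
The statement is the standard existence result for weak solutions of the viscous fluid--rigid body system with Navier-slip conditions at a fixed viscosity $\varepsilon>0$, and I would reconstruct its proof along the lines of Gérard-Varet and Hillairet. The guiding principle is the monolithic formulation already reflected in \eqref{momentum:NS}: introduce the single velocity field $\vu^\varepsilon = (1-\chi_\B^\varepsilon)\vu_\F^\varepsilon + \chi_\B^\varepsilon \vu_\B^\varepsilon$ and the single density $\vr^\varepsilon = (1-\chi_\B^\varepsilon) + \chi_\B^\varepsilon \vr_\B^\varepsilon$, and test only against fields $\vphi \in V_T$ that are rigid inside $\B^\varepsilon(t)$ and tangent to the boundaries. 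The crucial point, and the reason the slip condition is imposed, is that with such test fields the pressure term and the boundary-stress integrals coupling the fluid to \eqref{bodymotion:ineuler} integrate by parts into the single bulk identity \eqref{momentum:NS}, so the body ODEs are automatically encoded and one never has to track $p_\F^\varepsilon$ or the surface traction explicitly. The whole problem is thereby reduced to producing one velocity field in the class $U_T$ together with a body characteristic function solving the transport equation \eqref{bodytransport:NS}.

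For the construction I would use a two-level approximation that decouples the geometry from the fluid. Given a body trajectory, i.e. a family of isometries $\eta[t]$ (equivalently the pair $(\X,\tn{O})$) defining $\B(t)$ and the associated rigid velocity $\vu_\B$, one solves the fluid momentum problem on the moving fluid region. To make the rigidity constraint tractable one either (i) penalizes it, replacing the solid by a fluid of large artificial viscosity $k\varepsilon$ supported in $\B(t)$ and letting $k\to\infty$, or (ii) works directly on the divergence-free, rigid-in-$\B$ constrained subspace via a Galerkin method built from the time-dependent test space. In either case one obtains an approximate velocity; feeding its rigid component back through \eqref{eq:QO}--\eqref{eq:Qw} updates the trajectory, and a Schauder fixed-point argument on a short time interval closes the coupling. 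The smoothness of $\partial\Omega$ and $\partial\B_0$ together with a lower bound $\mathrm{dist}(\B(t),\partial\Omega)\ge\delta>0$ guarantee the uniform geometric constants (extension operators, trace and Korn-type inequalities) needed for the fixed point.

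The uniform control comes from the energy identity: testing the approximate momentum equation with the solution itself yields \eqref{energy:NS}, so at the fixed viscosity $\varepsilon>0$ the approximations are bounded in $L^\infty(0,T;L^2_\sigma(\Omega))$ with $\vu_\F^\varepsilon$ bounded in $L^2(0,T;H^1_\sigma(\Omega))$, while \eqref{eq:energy_body} turns this into $L^\infty$ bounds on $\vV^\varepsilon$ and $\vw^\varepsilon$. Consequently the trajectories $(\X^\varepsilon,\tn{O}^\varepsilon)$ are uniformly Lipschitz, so by Arzelà--Ascoli the geometry converges and $\chi_\B^\varepsilon$ converges, its limit solving \eqref{bodytransport:NS} by DiPerna--Lions transport theory. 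For the fluid I would pass to the limit in the convective term $\vu_\F^\varepsilon\otimes\vu_\F^\varepsilon$ via an Aubin--Lions argument: the $L^2H^1$ bound plus a bound on $\partial_t \vu^\varepsilon$ extracted from the momentum equation (tested against fields in $V_T$) gives strong $L^2$ compactness of $\vu^\varepsilon$ on the moving fluid region. One must simultaneously show the limit field is genuinely rigid inside $\B(t)$, which follows because the constraint (or penalization) passes to the limit, and that the normal traces match across the moving interface.

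The principal obstacle is precisely this coupling through the moving, solution-dependent domain: the test-function space $V_T$ and the fluid region $\mc{F}(t)$ depend on the unknown body position, so the estimates and the compactness arguments must be made uniform with respect to the approximation parameter while the geometry itself is still converging, and the characteristic function $\chi_\B^\varepsilon$ enters the nonlinearity without a priori spatial regularity. Handling this requires combining the renormalized transport theory for $\chi_\B^\varepsilon$ with the velocity compactness in a single limit passage. The final assertion of existence ``up to collision'' is then a continuation argument: the local solution extends as long as $\mathrm{dist}(\B^\varepsilon(t),\partial\Omega)$ stays bounded below, since all geometric constants remain controlled there; one sets $T$ to be the supremum of such times, and either the body never reaches $\partial\Omega$, so $T=\infty$, or $\mathrm{dist}(\B^\varepsilon(t),\partial\Omega)\to 0$ as $t\to T$, which is the stated alternative.
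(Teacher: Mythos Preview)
The paper does not give its own proof of this theorem: it is quoted as \cite[Theorem~2.2]{GH2} (G\'erard-Varet and Hillairet), with the remark that the extension from constant body density and Navier slip to smooth non-constant $\vr_\B$ and complete slip is straightforward. Your sketch is a faithful reconstruction of the G\'erard-Varet--Hillairet strategy that the paper is citing, so it is appropriate here; just be aware that in the context of this paper the result is invoked rather than re-proved.
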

\begin{Remark}
In fact, Theorem \ref{t:GVH} is proved in \cite{GH2} for constant $\vr_\B$ and Navier slip boundary conditions, however generalization for non-constant smooth density of the body and complete slip boundary conditions is straightforward.
\end{Remark}

Now we are in a position to prove that the sequence of weak solutions $(\mc{B}^{\varepsilon},\vu^{\varepsilon})$ to system \eqref{fluidmotion:inNSE} converges to a measure-valued solution $(\B,Y_{t,\x},\D)$ for the system \eqref{fluidmotion:ineuler}-\eqref{bodymotion:ineuler} as $\varepsilon \rightarrow 0$. To do that, we recall couple of results.

%We first introduce the operator $P_\B$ as follows:
%\begin{equation}\label{eq:PBdef}
%    P_\B\vu := \frac{1}{m_\chi}\int_\Omega \vr_\B \chi_\B\vu + \left(\mathbb{J_\chi}^{-1}\int_\Omega \vr_\B \chi_\B (\x'-\X_\chi)\times\vu\, d\x' \right)\times (\x-\X_\chi),
%\end{equation}
%where
%\begin{align}
%    m_\chi &= \int_{\Omega} \vr_\B\chi_\B(t, \x) \dx, \label{def:mchi} \\
%    \X_\chi(t) &= \frac{1}{m_\chi} \int_{\Omega} \vr_\B\chi_\B(t, \x)\x \dx, \label{def:Xchi} \\
%    \tn{J_\chi}(t)\vc{a}\cdot\vc{b} &= \int_{\Omega} \vr_\B\chi_\B(t, \x) \left[\vc{a}\times (\x-\X_\chi(t))\right]\cdot\left[\vc{b}\times (\x-\X_\chi(t))\right] \dx. \label{def:Jchi}
%\end{align}
%Note that here $\vr_\B$ needs to be defined in the whole $\Omega$, therefore we need to introduce a smooth extension of $\vr_B(0,\x)$ to the whole domain $\Omega$. {\color{red} THIS IS NOT OBVIOUS HOW TO DO IT WITH GENERAL $\vu \in L^\infty(0,T,L^2_\sigma(\Omega))$!!! On the other hand we may not need it...}

%In particular if $\chi_\B$ is an indicator function of a set $\B(t)$, $P_\B$ is an orthogonal projection to the space of rigid body motions.

We use a special case of \cite[Proposition 3.4]{GH2}:
  \begin{Proposition}\label{conv:body}
  Let $(\chi_\mc{B}^{\varepsilon},\vu_\B^{\varepsilon})$ be a bounded sequence in $L^{\infty}((0,T)\times \R^3) \times L^{\infty}(0,T;\mathcal{R})$, such that $\chi_\B^\varepsilon = \mathds{1}_{\mc{B}^\varepsilon}$ satisfying
  \begin{equation*}
  \frac{\partial \chi_{\B}^{\varepsilon}}{\partial t} + \operatorname{div}(\vu_\B^{\varepsilon} \chi_{\B}^{\varepsilon}) = 0 \quad \mbox{ in }\R^3 \mbox{ with }  \chi^{\varepsilon}_{\mc{B}}|_{t=0}= \mathds{1}_{\mc{B}_0}.
  \end{equation*}
  Then up to a subsequence 
\begin{equation*}
\vu_\B^{\varepsilon} \rightarrow \vu_\B \quad \mbox{ weakly-}* \mbox{ in }L^{\infty}(0,T;C^{\infty}_{loc}(\R^3))
\end{equation*}
with $\vu_\B \in L^\infty(0,T;\mathcal{R})$ and 
\begin{equation*}
\chi_{\B}^{\varepsilon} \rightarrow \chi_{\B} \quad \mbox{ weakly-}* \mbox{ in } L^{\infty}((0,T) \times \R^3) \mbox{ and strongly in } C([0,T]; L^p_{loc}(\R^3))    
\end{equation*}
 for $p<\infty$ with $\chi_{\B}$ is a solution to
  \begin{equation*}
  \frac{\partial \chi_{\B}}{\partial t} + \operatorname{div}(\vu_\B \chi_{\B}) = 0 \quad \mbox{ in }\R^3 \mbox{ with }  \chi_{\mc{B}}|_{t=0}= \mathds{1}_{\mc{B}_0}.
  \end{equation*}
  Moreover, $\chi_{\mc{B}}(t,\cdot)=\mathds{1}_{\mc{B}(t)}$ for all $t$.% with $\B(t)$ a Lipschitz bounded domain.
\end{Proposition}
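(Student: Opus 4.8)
The plan is to exploit that each $\vu_\B^\varepsilon$ is an affine — hence smooth, uniformly Lipschitz on compacts, and divergence free — velocity field on $\R^3$, so that the transport equation can be integrated explicitly along a family of rigid isometries and the whole problem reduces to a compactness statement for a linear system of ODEs in finite dimensions. Writing $\vu_\B^\varepsilon(t,\x)=\vV^\varepsilon(t)+\vw^\varepsilon(t)\times\x$, the hypothesis that $(\vu_\B^\varepsilon)$ is bounded in $L^\infty(0,T;\mathcal{R})$ means precisely that $(\vV^\varepsilon,\vw^\varepsilon)$ is bounded in $L^\infty(0,T;\R^6)$; I would extract a weakly-$*$ convergent subsequence $(\vV^\varepsilon,\vw^\varepsilon)\rightharpoonup(\vV,\vw)$ in $L^\infty(0,T;\R^6)$, and since on every compact set only the values and first derivatives of an element of $\mathcal{R}$ enter and these depend linearly on $(\vV,\vw)$, this yields $\vu_\B^\varepsilon\rightharpoonup\vu_\B:=\vV+\vw\times\x$ weakly-$*$ in $L^\infty(0,T;C^\infty_{loc}(\R^3))$, with $\vu_\B\in L^\infty(0,T;\mathcal{R})$.

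Next I would represent the transport by a family of isometries. Because $\vu_\B^\varepsilon$ is affine in $\x$, its flow $\eta^\varepsilon[t]$, solving $\partial_t\eta^\varepsilon[t](\y)=\vu_\B^\varepsilon(t,\eta^\varepsilon[t](\y))$ with $\eta^\varepsilon[0]=\mathrm{id}$, is itself affine: $\eta^\varepsilon[t](\y)=\vc{c}^\varepsilon(t)+\tn{O}^\varepsilon(t)\y$, where $(\vc{c}^\varepsilon,\tn{O}^\varepsilon)$ solves the \emph{linear} ODE system $\dot{\vc{c}}^\varepsilon=\vV^\varepsilon+\vw^\varepsilon\times\vc{c}^\varepsilon$, $\dot{\tn{O}}^\varepsilon=\tn{Q}^\varepsilon\tn{O}^\varepsilon$, with $\tn{Q}^\varepsilon$ the antisymmetric matrix associated to $\vw^\varepsilon$, and $\vc{c}^\varepsilon(0)=0$, $\tn{O}^\varepsilon(0)=\tn{I}$; one checks directly that $\tn{O}^\varepsilon(t)\in SO(3)$. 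By Gr\"onwall's lemma and the uniform $L^\infty$ bound on $(\vV^\varepsilon,\vw^\varepsilon)$, the pairs $(\vc{c}^\varepsilon,\tn{O}^\varepsilon)$ are bounded in $W^{1,\infty}(0,T)$ uniformly in $\varepsilon$, so the Arzel\`a--Ascoli theorem provides $\vc{c}^\varepsilon\to\vc{c}$ and $\tn{O}^\varepsilon\to\tn{O}$ in $C([0,T])$ (with $\tn{O}(t)\in SO(3)$, which is closed) as well as $\dot{\vc{c}}^\varepsilon\rightharpoonup\dot{\vc{c}}$, $\dot{\tn{O}}^\varepsilon\rightharpoonup\dot{\tn{O}}$ weakly-$*$ in $L^\infty$. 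Passing to the limit in the ODEs — each product is of the form (weakly-$*$ in $L^\infty$)$\times$(strongly in $C$) — shows that $\eta[t](\y)=\vc{c}(t)+\tn{O}(t)\y$ is exactly the flow of $\vu_\B$, hence a family of isometries satisfying the analogues of \eqref{eq:isom1}--\eqref{eq:isom2} and \eqref{eq:QO}--\eqref{eq:Qw}.

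Finally I would transfer this to the indicator functions. For such a Lipschitz, divergence-free field the transport equation with datum $\mathds{1}_{\B_0}$ has the explicit (renormalized) solution $\chi_\B^\varepsilon(t,\cdot)=\mathds{1}_{\B^\varepsilon(t)}$ with $\B^\varepsilon(t)=\eta^\varepsilon[t](\B_0)$, and likewise $\chi_\B(t,\cdot)=\mathds{1}_{\B(t)}$ with $\B(t):=\eta[t](\B_0)$ solves the limit equation. Since $\eta^\varepsilon[t]\to\eta[t]$ uniformly on $[0,T]$ together with their inverses, and $\partial\B_0$ is regular (hence Lebesgue null with uniformly thin tubular neighbourhoods), one gets $|\B^\varepsilon(t)\,\triangle\,\B(t)|\to0$ uniformly in $t$, i.e. $\chi_\B^\varepsilon\to\chi_\B$ in $C([0,T];L^p_{loc}(\R^3))$ for every $p<\infty$, and a fortiori weakly-$*$ in $L^\infty((0,T)\times\R^3)$; the equation for $\chi_\B$ then follows by passing to the limit in the weak formulation, the flux converging as $\vu_\B^\varepsilon\chi_\B^\varepsilon\to\vu_\B\chi_\B$ (weakly-$*$ times strongly), and by construction $\chi_\B(t,\cdot)=\mathds{1}_{\B(t)}$ for all $t$. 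I expect the genuinely delicate point to be this last step: upgrading uniform convergence of the flow maps to \emph{uniform-in-time} strong $L^p$ convergence of the discontinuous indicators, which needs both the Lebesgue-null boundary of $\B_0$ and the time-equicontinuity $|\B^\varepsilon(t)\,\triangle\,\B^\varepsilon(s)|\lesssim|t-s|$ coming from the uniform Lipschitz-in-time bound on $\eta^\varepsilon$.
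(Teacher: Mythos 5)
Your proof is correct, but note that the paper does not actually prove this proposition: it is invoked as a special case of \cite[Proposition 3.4]{GH2}, so there is no internal argument to compare against, and what you have written is essentially a self-contained reconstruction of the reasoning behind that citation. Your reduction to finite dimensions (weak-$*$ compactness of the coefficients $(\vV^\varepsilon,\vw^\varepsilon)$ in $L^\infty(0,T;\R^6)$, which is exactly what weak-$*$ convergence in $L^\infty(0,T;C^\infty_{loc}(\R^3))$ means here), the observation that the flows are affine isometries $\eta^\varepsilon[t](\y)=\vc{c}^\varepsilon(t)+\tn{O}^\varepsilon(t)\y$ solving a linear ODE system so that Arzel\`a--Ascoli plus (weak-$*$)$\times$(uniform) product convergence identifies the limit flow, and the symmetric-difference estimate $\sup_t|\B^\varepsilon(t)\,\triangle\,\B(t)|\to 0$ using that $\partial\B_0$ is compact and Lebesgue-null, are precisely the ingredients one needs. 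Two points deserve to be made explicit rather than implicit: (i) to identify the given $\chi_\B^\varepsilon(t,\cdot)$ with $\mathds{1}_{\eta^\varepsilon[t](\B_0)}$ you must invoke uniqueness of bounded distributional solutions of the transport equation driven by the rigid field, which holds because the field is globally Lipschitz in space with coefficients in $L^\infty(0,T)$ and divergence-free (classical characteristics or DiPerna--Lions); your phrase ``explicit (renormalized) solution'' gestures at this but the uniqueness step is what converts the hypothesis into your formula; (ii) the passage to the limit in the flux term $\vu_\B^\varepsilon\chi_\B^\varepsilon$ should be split as $\vu_\B^\varepsilon(\chi_\B^\varepsilon-\chi_\B)+\vu_\B^\varepsilon\chi_\B$, the first piece vanishing by the strong convergence of the indicators and the uniform bound on $\vu_\B^\varepsilon$ over the compact support of the test function, the second by weak-$*$ convergence against the fixed $L^1$ function $\chi_\B\nabla\psi$ — or, as you also note, one can bypass this entirely since $\mathds{1}_{\eta[t](\B_0)}$ solves the limit equation by construction. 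With these remarks your argument is complete; what it does not reproduce (and does not need here) is the somewhat greater generality of \cite[Proposition 3.4]{GH2}.
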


%We recall here \cite[Proposition 3.4]{GH2}:
%  \begin{Proposition}\label{conv:body}
%  Let $(\chi_\mc{B}^{\varepsilon},\vu^{\varepsilon})$ be a bounded sequence in $L^{\infty}((0,T)\times \R^3) \times L^{\infty}(0,T;L^2_{\sigma}(\Omega))$ satisfying
%  \begin{equation*}
%  \frac{\partial \chi_{\B}^{\varepsilon}}{\partial t} + \operatorname{div}(P^{\varepsilon}_{\B}\vu^{\varepsilon} \chi_{\B}^{\varepsilon}) = 0 \quad \mbox{ in }\R^3 \mbox{ with }  \chi^{\varepsilon}_{\mc{B}}|_{t=0}= \mathds{1}_{\mc{B}_0}.
%  \end{equation*}
%  Then up to a subsequence
%  \begin{itemize}
%  \item $\vu^{\varepsilon} \rightarrow \vu$ weakly-$*$ in $L^{\infty}(0,T; L^2_{\sigma}(\Omega))$,
%  \item $\chi_{\B}^{\varepsilon} \rightarrow \chi_{\B}$ weakly-$*$ in $L^{\infty}((0,T) \times \R^3)$ and strongly in $C([0,T]; L^p_{loc}(\R^3))$ for $p<\infty$ with $\chi_{\B}$ is a solution to
%  \begin{equation*}
%  \frac{\partial \chi_{\B}}{\partial t} + \operatorname{div}(P_{\B}\vu \chi_{\B}) = 0 \quad \mbox{ in }\R^3 \mbox{ with }  \chi_{\mc{B}}|_{t=0}= \mathds{1}_{\mc{B}_0}.
%  \end{equation*}
%  \end{itemize}
%  Moreover, $\chi_{\mc{B}}(t,\cdot)=\mathds{1}_{\mc{B}(t)}$ for all $t$ with $\B(t)$ a Lipschitz bounded domain and the following convergence for the projection operator holds:
%\begin{equation*}
%P^{\varepsilon}_{\B}\vu^{\varepsilon} \rightarrow P_{\B}\vu \mbox{ weakly-}* \mbox{ in }L^{\infty}(0,T;C^{\infty}_{loc}(\R^3)).
%\end{equation*}
%\end{Proposition}

Secondly, we recall the following comparison result \cite[Lemma 2.1]{FGGE}:
 \begin{Lemma}\label{comparison}
 Let $\{Z_n\}_{n=1}^{\infty}$, $Z_n:\Omega \rightarrow \R^N$ be a sequence of equi-integrable functions generating a Young measure $Y_z$, $z\in \Omega$ with $\Omega \subset \R^N$ is a bounded domain. Let
 \begin{equation*}
 G: \R^N \rightarrow [0,\infty)
 \end{equation*}
 be a continuous function such that
 \begin{equation*}
 \sup\|G(Z_n)\|_{L^1(\Omega)} < \infty
 \end{equation*}
 and let $F$ be continuous such that
 \begin{equation*}
 F: \R^N \rightarrow \R, \quad |F(Z)| \leq G(Z), \quad \forall \ Z\in \R^N.
 \end{equation*}
 Define
 \begin{equation*}
 F_{\infty}=\widetilde{F}-\langle Y_z, F(Z)\rangle\ dz, \quad G_{\infty}=\widetilde{G}-\langle Y_z, G(Z)\rangle\ dz,
 \end{equation*}
 where $\widetilde{F} \in \mc{M}(\overline{\Omega})$, $\widetilde{G} \in \mc{M}(\overline{\Omega})$ are the weak-$*$ limits of $\{F(Z_n)\}_{n=1}^{\infty}$, $\{G(Z_n)\}_{n=1}^{\infty}$ in $\mc{M}(\overline{\Omega})$. Then
 \begin{equation*}
 |F_{\infty}| \leq |G_{\infty}|.
 \end{equation*}
 \end{Lemma}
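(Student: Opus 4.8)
The plan is to reduce the statement to one structural fact — that the concentration defect of a \emph{non-negative} continuous integrand is a non-negative measure — and then to apply that fact to the two non-negative combinations $G+F$ and $G-F$. To set things up, write $F_n:=F(Z_n)$ and $G_n:=G(Z_n)$. Since $0\le|F_n|\le G_n$ and $\sup_n\|G_n\|_{L^1(\Omega)}<\infty$, both sequences are bounded in $L^1(\Omega)$, so after passing to a subsequence $F_n\stackrel{*}{\rightharpoonup}\widetilde F$ and $G_n\stackrel{*}{\rightharpoonup}\widetilde G$ in $\mc{M}(\overline\Omega)$, and the $L^1$-bounds — through the part of the fundamental theorem on Young measures recalled in Section~\ref{s:existence} — guarantee that $F$ and $G$ are $Y_z$-integrable for a.a.\ $z$, with $z\mapsto\langle Y_z,F(Z)\rangle$ and $z\mapsto\langle Y_z,G(Z)\rangle$ in $L^1(\Omega)$. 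Hence $F_\infty$ and $G_\infty$ are well-defined elements of $\mc{M}(\overline\Omega)$.

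The core step I would isolate is the following claim: if $h\colon\R^N\to[0,\infty)$ is continuous, $\sup_n\|h(Z_n)\|_{L^1(\Omega)}<\infty$ and $h(Z_n)\stackrel{*}{\rightharpoonup}\widetilde h$ in $\mc{M}(\overline\Omega)$, then $\widetilde h\ge\langle Y_z,h(Z)\rangle\,dz$ as measures on $\overline\Omega$. To prove it, fix $\varphi\in C(\overline\Omega)$ with $\varphi\ge0$ and truncate: for a positive integer $k$ the map $h\wedge k:=\min\{h,k\}$ is bounded and continuous, so by the fundamental theorem on Young measures $h(Z_n)\wedge k\stackrel{*}{\rightharpoonup}\langle Y_z,h(Z)\wedge k\rangle$ in $L^\infty(\Omega)$; combining this with $h(Z_n)\ge h(Z_n)\wedge k\ge0$ gives
\[
\int_{\overline\Omega}\varphi\,d\widetilde h=\lim_{n\to\infty}\int_\Omega\varphi\,h(Z_n)\ \ge\ \lim_{n\to\infty}\int_\Omega\varphi\,\big(h(Z_n)\wedge k\big)=\int_\Omega\varphi\,\langle Y_z,h(Z)\wedge k\rangle\,dz .
\]
Letting $k\to\infty$ and using monotone convergence ($\langle Y_z,h(Z)\wedge k\rangle\uparrow\langle Y_z,h(Z)\rangle$) yields $\int_{\overline\Omega}\varphi\,d\widetilde h\ge\int_\Omega\varphi\,\langle Y_z,h(Z)\rangle\,dz$ for every $\varphi\ge0$, which is exactly the non-negativity of the defect.

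To conclude I would apply the claim three times: to $h=G$, which gives $G_\infty\ge0$ and hence $|G_\infty|=G_\infty$; and to $h=G+F\ge0$ and $h=G-F\ge0$, the non-negativity coming from $|F|\le G$. Because $(G\pm F)(Z_n)=G(Z_n)\pm F(Z_n)$, the weak-$*$ limits add ($\widetilde{G\pm F}=\widetilde G\pm\widetilde F$) and the barycenters add by linearity of $Y_z$, so the corresponding defects are precisely $G_\infty+F_\infty$ and $G_\infty-F_\infty$; the claim forces $G_\infty+F_\infty\ge0$ and $G_\infty-F_\infty\ge0$, i.e.\ $-G_\infty\le F_\infty\le G_\infty$. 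Taking the Hahn decomposition $F_\infty=F_\infty^+-F_\infty^-$ and using $G_\infty\ge0$, the two bounds give $F_\infty^+\le G_\infty$ on the positive set and $F_\infty^-\le G_\infty$ on the negative set of $F_\infty$, whence $|F_\infty|\le G_\infty=|G_\infty|$. The one point that needs care is the truncation in the middle step: the fundamental theorem on Young measures applies only to bounded continuous integrands, so one must pass to the possibly unbounded $h=G\pm F$ through $h\wedge k$ while keeping the inequality correctly oriented — which works precisely because only the one-sided bound is needed and $h\wedge k\le h$, so monotone convergence suffices and no extra uniform integrability of $h(Z_n)$ beyond the stated $L^1$-bounds is invoked; the equi-integrability of $\{Z_n\}$ itself serves only to ensure that $\{Z_n\}$ generates the Young measure $Y_z$.
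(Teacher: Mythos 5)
Your proof is correct. Note that the paper itself does not prove this lemma at all: it is recalled verbatim from the cited reference \cite[Lemma 2.1]{FGGE}, so there is no internal argument to compare against; your route (non-negativity of the concentration defect for a non-negative integrand, established by truncating with $h\wedge k$ and using the Young-measure representation for bounded continuous integrands plus monotone convergence, then applied to $G+F\ge 0$ and $G-F\ge 0$ to get $-G_\infty\le F_\infty\le G_\infty$ and hence $|F_\infty|\le G_\infty=|G_\infty|$) is essentially the standard proof of that cited result. Two cosmetic remarks: the weak-$*$ limits of $(G\pm F)(Z_n)$ being $\widetilde G\pm\widetilde F$ requires the two convergences to hold along the same (sub)sequence, which is implicit in the hypothesis that $\widetilde F,\widetilde G$ are the limits of the full sequences; and the decomposition $F_\infty=F_\infty^+-F_\infty^-$ you invoke is the Jordan decomposition (the Hahn decomposition being that of the underlying set), though the estimate you draw from it is exactly right.
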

  \begin{proof}[Proof of \cref{thm:measure-euler}]
  For every $\varepsilon >0$, $\vu_{\mc{F}}^\varepsilon(0)=\vu_{\mc{F}_0} \in L^2_{\sigma}(\Omega)$ and $\vu_{\mc{B}}^\varepsilon(0)=\vu_{\mc{B}_0} \in \mc{R}$ are such that $\vu_{\mc{F}_0}\cdot \vc{n}=\vu_{\mc{B}_0} \cdot \vc{n}$ on $\partial\mc{B}_0$. Let $(\mc{B}^{\varepsilon},\vu^{\varepsilon})$ be a sequence of weak solutions to system \eqref{fluidmotion:inNSE}. We have $\chi^{\varepsilon}_{\mc{B}}(t,\x) = \mathds{1}_{\mc{B}^{\varepsilon}(t)}(\x) \in L^{\infty}((0,T) \times \Omega)$ from \eqref{body:NS} and $\vu^{\varepsilon}$ is a bounded sequence in $L^{\infty}(0,T;L^2_{\sigma}(\Omega))$ from energy estimate \eqref{energy:NS} with $\vu_\B^\varepsilon \in L^\infty(0,T;\mathcal{R})$. Thus, we can use \cref{conv:body} to establish the relations \eqref{body} and \eqref{bodytransport} from the equations  \eqref{body:NS} and \eqref{bodytransport:NS} as $\varepsilon \rightarrow 0$. By the fundamental theorem of Young measures we denote $Y_{t,\x}$ the  parametrized probability measure generated by the sequence $\vu^\varepsilon$. Again by Proposition \ref{conv:body} we moreover get \eqref{bodyYoung}. Therefore we denote by $\vu_\F$ the dummy variable for the Young measure $Y_{t,\x}$ on the fluid part, whereas we denote $\vu_\B$ directly the limit of $\vu^\varepsilon$ on the solid part.

The divergence-free condition $\vu^\varepsilon \in L^\infty(0,T,L^2_\sigma(\Omega))$ together with boundary conditions on $\partial\Omega$ and $\partial\B^\varepsilon$ imply in particular that for any test function  $\varphi \in C^\infty_c([0,\tau)\times \overline{\Omega})$, $\tau \in [0,T]$
\begin{equation}\label{solenoidality0}
\int_0^\tau\int_{\mc{F}^\varepsilon(t)} \vu^\varepsilon_{\mc{F}} \cdot  \nabla \varphi= \int_0^\tau\int_{\partial \mc{B}^\varepsilon(t)}   \vu_{\mc{B}}^\varepsilon\cdot \vc{n} \, \varphi.
\end{equation}
Observe that
\begin{equation*}
\int_0^\tau\int_{\mc{F}^\varepsilon(t)} \vu^\varepsilon_{\mc{F}} \cdot  \nabla \varphi=\int_0^\tau\int_{\Omega} (1-\chi^{\varepsilon}_{\mc{B}})\vu^\varepsilon\cdot  \nabla \varphi, \quad  
\int_0^\tau\int_{\partial \mc{B}^\varepsilon(t)}   \vu_{\mc{B}}^\varepsilon\cdot \vc{n} \, \varphi= 
\int_0^\tau\int_{\mc{B}^\varepsilon(t)} \vu^\varepsilon_{\mc{B}} \cdot  \nabla \varphi=
\int_0^\tau\int_{\Omega} \chi^{\varepsilon}_{\mc{B}}\vu^\varepsilon\cdot  \nabla \varphi.
\end{equation*}
\cref{conv:body} helps us in passing the limit in \eqref{solenoidality0} as $\varepsilon\rightarrow 0$ to obtain \eqref{solenoidality}.
Now we can concentrate on the momentum equation. Notice that
 \begin{equation*}
 \varepsilon\int_0^T\int_{\mc{F}^{\varepsilon}(t)} \mathbb{D}(\vu_{\mc{F}}^{\varepsilon}):\mathbb{D}(\vphi_{\mc{F}}) = \sqrt{\varepsilon} \int_0^T\int_{\mc{F}^{\varepsilon}(t)} \sqrt{\varepsilon}\mathbb{D}(\vu_{\mc{F}}^{\varepsilon}):\mathbb{D}(\vphi_{\mc{F}}) \leq \sqrt{\varepsilon} \|\sqrt{\varepsilon}\mathbb{D}(\vu_{\mc{F}}^{\varepsilon})\|_{L^2(0,T; L^2(\mc{F}^{\varepsilon}))}\|\mathbb{D}(\vphi_{\mc{F}})\|_{L^2(0,T; L^2(\mc{F}^{\varepsilon}))}.
 \end{equation*}
The energy estimate \eqref{energy:NS} implies $\|\sqrt{\varepsilon}\mathbb{D}(\vu_{\mc{F}}^{\varepsilon})\|_{L^2(0,T; L^2(\mc{F}^{\varepsilon}))} \leq C$. Thus the term
\begin{equation*}
\varepsilon\int_0^T\int_{\mc{F}^{\varepsilon}(t)} \mathbb{D}(\vu_{\mc{F}}^{\varepsilon}):\mathbb{D}(\vphi_{\mc{F}}) \rightarrow 0 \mbox{ as }\varepsilon \rightarrow 0.
\end{equation*}
 Now we can analyse the first two terms of left-hand side of
\eqref{momentum:NS}. We can write
\begin{equation*}
\int_0^T\int_{\mc{F}^{\varepsilon}(t)} \vu_{\mc{F}}^{\varepsilon}\cdot \frac{\partial}{\partial t}\vphi_{\mc{F}} + \int_0^T\int_{\mc{B}^{\varepsilon}(t)} \vr_\B^\varepsilon \vu_{\mc{B}}^{\varepsilon}\cdot \frac{\partial}{\partial t}\vphi_{\mc{B}} = \int_0^T\int_{\Omega}\left[ (1-\chi_{\B}^{\varepsilon})\vu^{\varepsilon}\cdot \frac{\partial}{\partial t}\vphi_{\mc{F}} + \vr_\B^\varepsilon\chi_{\B}^{\varepsilon}\vu^{\varepsilon}\cdot \frac{\partial}{\partial t}\vphi_{\mc{B}}\right]
\end{equation*}
with $\vu^{\varepsilon}=(1-\chi^{\varepsilon}_{\mc{B}})\vu^{\varepsilon}_{\mc{F}} + \chi^{\varepsilon}_{\mc{B}}\vu^{\varepsilon}_{\mc{B}}$. %We have $\vu^{\varepsilon} \rightarrow \vu$ weakly-$*$ in $L^{\infty}(0,T; L^2_{\sigma}(\Omega))$.
We already know from \cref{conv:body} that $\chi_{\B}^{\varepsilon} \rightarrow \chi_{\B}$ strongly in $C([0,T]; L^p_{loc}(\R^3))$ for $p<\infty$, therefore also $\vr_\B^\varepsilon\chi_{\B}^{\varepsilon} \rightarrow \vr_\B\chi_{\B}$ strongly in $C([0,T]; L^p_{loc}(\R^3))$ for $p<\infty$. From the choice of the test function space $V_T$, it is clear that $\partial_t \vphi_{\mc{F}}\in \mc{D}([0,T); \mc{D}(\Omega))$, $\partial_t\vphi_{\mc{B}}\in \mc{D}([0,T); \mc{R})$. Using the above mentioned tools, we have
\begin{equation*}
\int_0^T\int_{\Omega}\left[ (1-\chi_{\B}^{\varepsilon})\vu^{\varepsilon}\cdot \frac{\partial}{\partial t}\vphi_{\mc{F}} + \vr_\B^\varepsilon \chi_{\B}^{\varepsilon}\vu^{\varepsilon}\cdot \frac{\partial}{\partial t}\vphi_{\mc{B}}\right] \rightarrow \int_0^T\int_{\Omega} \left[(1-\chi_{\B})\langle Y_{t,\x},\vu_{\mc{F}}\rangle\cdot \frac{\partial}{\partial t}\vphi_{\mc{F}} + \vr_\B \chi_{\B}\vu_{\B}\cdot \frac{\partial}{\partial t}\vphi_{\mc{B}}\right].
\end{equation*}
Now we analyze the convergence of the convective term in the momentum equation and the energy inequality. Regarding the diffusive term in the energy inequality \eqref{energy:NS}:
\begin{equation*}
2\varepsilon\int_0^T\int_{\mc{F}^{\varepsilon}(t)} |\mathbb{D}(\vu_{\mc{F}}^{\varepsilon})|^2 \geq 0 \mbox{ as }\varepsilon \rightarrow 0.
\end{equation*}

 We have from the energy inequality \eqref{energy:NS} that $((1-\chi_{\mc{B}}^{\varepsilon})+\chi_{\mc{B}}^{\varepsilon}\vr_\B^\varepsilon)\frac{1}{2} |\vu^{\varepsilon}|^2$ is bounded in $L^{\infty}(0,T;L^1(\Omega))$. We can identify
 \begin{equation*}
 ((1-\chi_{\mc{B}}^{\varepsilon})+\chi_{\mc{B}}^{\varepsilon}\vr_\B^\varepsilon)\frac{1}{2} |\vu^{\varepsilon}|^2 (\tau,\cdot) \in \mc{M}(\overline{\Omega}) \mbox{ bounded uniformly for }\tau\in [0,T].
 \end{equation*}
 Up to a subsequence, we can assume
 \begin{equation*}
 ((1-\chi_{\mc{B}}^{\varepsilon})+\chi_{\mc{B}}^{\varepsilon}\vr_\B^\varepsilon)\frac{1}{2} |\vu^{\varepsilon}|^2 (\tau,\cdot) \rightarrow E \mbox{ weakly-}* \mbox{ in }L^{\infty}_{weak}(0,T; \mc{M}(\overline{\Omega})).
 \end{equation*}
 We introduce a new non-negative measure:
 \begin{equation*}
 E_{\infty}= E- \langle Y_{t,\x}; ((1-\chi_{\mc{B}}^{\varepsilon})+\chi_{\mc{B}}^{\varepsilon}\vr_\B^\varepsilon) \frac{1}{2}|\vu^{\varepsilon}|^2\rangle\ d \x.
 \end{equation*}
 We can take the limit $\varepsilon \rightarrow 0$ in the energy balance \eqref{energy:NS} that yield: for a.e. $\tau\in (0,T)$
 \begin{equation}\label{energy:euler_2}
\int_{\mc{F}(\tau)} \langle Y_{\tau,\x},\frac{1}{2}|\vu_{\mc{F}}|^2\rangle + \int_{\mc{B}(\tau)} \frac{1}{2}\vr_\B |\vu_{\mc{B}}|^2
+E_{\infty}(\tau)|\Omega| \leq \int_{\mc{F}_0} \frac{1}{2}|\vu_{\mc{F}_0}|^2 + \int_{\mc{B}_0} \frac{1}{2}\vr_\B |\vu_{\mc{B}_0}|^2.
\end{equation}
 Thus, it gives us the energy inequality \eqref{energy:euler} with
 \begin{equation*}
 \mc{D}(\tau)=E_{\infty}(\tau)|\Omega| \mbox{ for a.e. }\tau\in (0,T).
 \end{equation*}
 Regarding the convective term, the quantity $\vu^{\varepsilon}_\F \otimes \vu^{\varepsilon}_\F$ is bounded only in $L^{\infty}(0,T;L^1(\mc{F}^{\varepsilon}))$. Moreover, we have the following observation:
 \begin{equation*}
 |\vu^\varepsilon_\F \otimes \vu^\varepsilon_\F| \leq |\vu^\varepsilon_\F|^2.
 \end{equation*}
 Thus, using \cref{comparison}, we have the following convergence: as $\varepsilon \rightarrow 0$
 \begin{equation*}
\int_0^T\int_{\mc{F}^{\varepsilon}(t)} (\vu_{\mc{F}}^{\varepsilon} \otimes \vu_{\mc{F}}^{\varepsilon}) : \nabla \vphi_{\mc{F}} \rightarrow \int_0^\tau\int_{\mc{F}(t)} \langle Y_{t,\x},(\vu_{\mc{F}} \otimes \vu_{\mc{F}}) : \nabla \vphi_{\mc{F}}\rangle + \int_{0}^\tau \langle \mu^M_{D}, \nabla \vphi_\F\rangle,
 \end{equation*}
 where
 \begin{equation*}
 \mu^M_{D}=\{\mu^M_{D,i,j}\}_{i,j=1}^3,\ \mu^M_{D,i,j}\in L^{\infty}_{weak}(0,T;\mc{M}(\F_t)),
 \end{equation*}
 along with the relation
 \begin{equation*}
 \int^\tau_0\int_{\Omega}|\mu_{D}^{M}
|\leq 2\int^\tau_0 E_{\infty}(\tau)|\Omega| \mbox{ for a.e. }\tau\in (0,T).
\end{equation*}
Thus, we have established the relations \eqref{momentum:euler} and \eqref{energy:euler} with $\xi=2$ and $\mc{D}(\tau)=E_{\infty}(\tau)|\Omega| \mbox{ for a.e. }\tau\in (0,T)$.
\end{proof}

\section{Weak-strong uniqueness}\label{s:wsu}

\subsection{Change of coordinates}\label{s:cc}

Let $\{{\B_1},Y_{t,\x},\D\}$ be a measure-valued solution to \eqref{fluidmotion:ineuler}-\eqref{bodymotion:ineuler} on time interval $(0,T)$ in the sense of Definition \ref{mvs} with
$\vu_{1\B} = \vV_1 + \vw_1 \times (\x - \X_1)$ being the associated velocity of the rigid body. Moreover we denote by $\vu_{1 \F}$ the dummy variable of the Young measure $Y_{t,\x}$ on the fluid domain.
Let $(\B_2, \vu_{2 \F}, p_{2 \F}, \vV_2,\vw_2)$ be a strong solution to \eqref{fluidmotion:ineuler}-\eqref{bodymotion:ineuler} on time interval $(0,T_0)$. We denote by $\F_1(t)$ the domain occupied by the fluid at time $t$ for the measure-valued solution and analogously, $\F_2(t)$ for the strong solution.

We denote
\begin{equation} \label{u2}
\vu_2 =
\left|
\begin{array}{c}
\vu_{2 \mathcal{F}}\ \ \mbox{in}\ Q_{\F_2},\\\
\vV_2 + \vw_2\times(\x-\X_2)\ \ \mbox{in}\ Q_{\B_2}.
\end{array}
\right.
\end{equation}
A priori, there is no reason why the body $\B_1(t)$ of the measure-valued solution would not touch the boundary $\de\Omega$ at some time instant $t < T_0$. Therefore, following Kreml et al. \cite{KrNePi_2}, we introduce a time $T_{min}$ such that
\begin{equation} \label{eq:tmin}
    T_{min} = \inf \left\{t \in (0,T_0); d(\B_1(t),\de\Omega) \leq \frac{\kappa}{2}\right\},
\end{equation}
where, for the strong solution, $\kappa$ satisfies the following relation
\begin{equation}\label{eq:dist_cond}
d(\B_2(t),\de \Omega)\geq \kappa>0, \quad \forall t\in[0,T_0], \quad \textrm{for some} \quad d(\B_0,\de \Omega)>\kappa>0.
\end{equation}
In particular, we have $T_{min} > 0$ and on the interval $(0,T_{min})$ there is no collision between the body $\B_1$ and the boundary $\de \Omega$. Here and hereafter, our analysis will be performed on the time interval $(0,T_{min})$. However, for simplicity of the notation, we keep denoting the time interval by $(0,T)$.

In order to compare the two solutions, we need to transfer them to the same domain, because in general $\B_1 \neq \B_2$. In particular, we need to transform the strong solution to the domain of the measure-valued solution in such a way that $\B_2(t)$ is transformed to $\B_1(t)$.
For this purpose, following the analysis developed in \cite{KrNePi_2}, we introduce cutoff functions $\zeta_i(t,\x)$ for $i=1,2$ such that $\zeta_i(t,\x)=1$ in a neighborhood of $\B_i(t)$ and $\zeta_i(t,\x)=0$ in a neighborhood of $\de \Omega$. Moreover, we extend the domain of definition of the rigid body motions $\vu_{i\B}$ to the whole $(0,T)\times \Omega$, namely for $\x \in \Omega$ we have
\begin{equation*}
\vu_{i \mathcal{B}}(t,\x)=\vV_i(t) + \vw_i(t)\times (\x-\X_i(t)).
\end{equation*}
Then we set
\begin{equation*}
\Lambda_i(t,\x)=\zeta_i(t,\x)\vu_{i\B}(t,\x).
\end{equation*}
Note that $\Lambda_i$ is a rigid motion in a neighborhood of $\B_i(t)$ and vanishes in a vicinity of $\de \Omega$. In addition, $\Lambda_i$ should be smooth in the space variables and divergence free, in order to preserve the divergence-free condition on the fluid velocity.

Now, we introduce the transformations
$$
\bZ_i:\Om \to \Om
$$
as solutions to the following ODEs
\begin{align*}
&\frac{\d}{\d t}\bZ_i(t,\y)=\Lambda_i(t,\bZ_i(t,\y)), \quad \forall \y \in \Om, \; t \in (0,T),\\
&\bZ_i(0,\y)=\y.
\end{align*}
Then, we define $\bY_i=\bZ_i^{-1}$ and the mappings $\widetilde{\bZ}_i:\Om \to \Om$ as
\begin{align}
\widetilde{\bZ}_1(t,\x)&=\bZ_1(t,\bY_2(t,\x)), \\
\widetilde{\bZ}_2(t,\x)&=\bZ_2(t,\bY_1(t,\x)).
\end{align}
Note that $\widetilde{\bZ}_1(t,\cdot) = \widetilde{\bZ}_2^{-1}(t,\cdot)$ and $\widetilde{\bZ}_2(t,\B_1(t)) = \B_2(t)$ for all $t \in [0,T)$. In particular in the neighborhoods of the body $\B_{2}(t)$ the mapping $\widetilde{\bZ}_1(t,\cdot)$ is rigid and vice versa. More precisely we have
\begin{align}
\widetilde{\bZ}_1(t,\x) &= \X_1(t) + \tn{O}_1(t)\tn{O}_2^T(t)(\x - \X_2(t)) \qquad \text{ in the neighborhood of } \B_2(t), \\
\widetilde{\bZ}_2(t,\x) &= \X_2(t) + \tn{O}_2(t)\tn{O}_1^T(t)(\x - \X_1(t)) \qquad \text{ in the neighborhood of } \B_1(t). \label{eq:bzexpress}
\end{align}
For simplicity of notation we denote
\begin{equation} \label{def:tildeo}
\Ot(t) = \tn{O}_2(t)\tn{O}_1^T(t).
\end{equation}
Now, we define the transformed strong solution  $\bU^s_{\mathcal{F}}$ as
\begin{align}
%\label{newsol1}
%r^s(t,\x) &=\vr_{2 \F}(t,\widetilde{\bZ}_2(t,\x)) \qquad \qquad \qquad \quad \text{ for all } (t,\x) \in Q_{\F 1}, \\
\label{newsol2}
\bU^s_{\mathcal{F}}(t,\x) &=\tn{J}_{\widetilde{\bZ}_1}(t,\widetilde{\bZ}_2(t,\x))\vu_{2\mathcal{F}}(t,\widetilde{\bZ}_2(t,\x)) \quad \text{ for all } (t,\x) \in Q_T,
\end{align}
where $(\tn{J}_{\widetilde{\bZ}_1})_{ij}(t,\widetilde{\bZ}_2(t,\x)) = \frac{\de (\widetilde{\bZ}_1)_i}{\de \x_j}(t,\widetilde{\bZ}_2(t,\x))$, and the transformed pressure as $P^s_\F(t,\x) =p_{2 \F}(t,\widetilde{\bZ}_2(t,\x))$.
In particular, for $\x \in \B_1(t)$, the transformed rigid velocity reads
\begin{equation} \label{newsol3}
    \bU^s_\B(t,\x) = \vV^s(t) + \vw^s(t) \times (\x-\X_1(t))
\end{equation}
with
\begin{equation}\label{eq:strongbody}
    \vV^s(t) = \Ot^T(t)\vV_2(t), \qquad \qquad \vw^s(t) = \Ot^T(t)\vw_2(t).
\end{equation}
The following lemma holds (see \cite{KrNePi_2}, Lemma 3.1).
\begin{Lemma}\label{l:31}
\begin{itemize}
    \item[(i)] It holds
    \begin{equation}
        \Ot^T(t)\frac{\d \Ot}{\dt}(t)\x = (\vw^s-\vw_1)(t)\times \x.
    \end{equation}
    \item[(ii)] The following estimates hold for $\x \in \de \B_1(t)$, $t \in (0,T)$
    \begin{align}\label{eq:315}
        |\widetilde{\bZ}_2(t,\x) - \x| &\leq C\left(\|\vV_1-\vV^s\|_{L^2(0,t)} + \|\vw_1-\vw^s\|_{L^2(0,t)}\right),\\ \label{eq:316}
        |\de_t \widetilde{\bZ}_2(t,\x)| &\leq C\left(|\vV_1-\vV^s|(t) + |\vw_1-\vw^s|(t)\right).
    \end{align}
    \item[(iii)] The following estimates hold for $t \in (0,T)$
    \begin{align}\label{eq:317}
        \|\widetilde{\bZ}_2(t,\cdot) - \mathrm{id}\|_{W^{3,\infty}(\F_1(t))} &\leq C\left(\|\vV_1-\vV^s\|_{L^2(0,t)} + \|\vw_1-\vw^s\|_{L^2(0,t)}\right),\\
        \|\de_t \widetilde{\bZ}_2(t,\cdot)\|_{W^{1,\infty}(\F_1(t))} &\leq C\left(|\vV_1-\vV^s|(t) + |\vw_1-\vw^s|(t)\right). \label{eq:318}
    \end{align}
\end{itemize}
\end{Lemma}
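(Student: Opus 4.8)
The plan is to follow the change-of-variables analysis of Kreml et al.\ \cite{KrNePi_2} (Lemma 3.1), adapted to the present inviscid situation. \emph{Part (i)} is pure linear algebra. Since $\tn{Q}_i=\big(\frac{\d}{\d t}\tn{O}_i\big)\tn{O}_i^{-1}$ by \eqref{eq:QO}, we have $\frac{\d}{\d t}\tn{O}_i=\tn{Q}_i\tn{O}_i$, where $\tn{Q}_i$ is antisymmetric with $\tn{Q}_i\y=\vw_i\times\y$ by \eqref{eq:Qw}. Differentiating $\Ot=\tn{O}_2\tn{O}_1^T$ and using $\tn{Q}_1^T=-\tn{Q}_1$ gives $\frac{\d}{\d t}\Ot=\tn{Q}_2\Ot-\Ot\tn{Q}_1$, hence $\Ot^T\frac{\d}{\d t}\Ot=\Ot^T\tn{Q}_2\Ot-\tn{Q}_1$; applying this to $\x$, using the elementary identity $R^T(\vc{a}\times R\vc{b})=(R^T\vc{a})\times\vc{b}$ for $R\in SO(3)$ and $\Ot^T\vw_2=\vw^s$ from \eqref{eq:strongbody}, the first term becomes $(\Ot^T\vw_2)\times\x=\vw^s\times\x$, which proves (i).

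For \emph{(ii)--(iii)} the idea is to split $\F_1(t)$ into three regions: (a) a fixed neighborhood of $\B_1(t)$ where $\widetilde{\bZ}_2$ is the explicit rigid map \eqref{eq:bzexpress}; (b) a neighborhood of $\de\Omega$ where $\Lambda_i\equiv 0$, so that $\bZ_i(t,\cdot)=\mathrm{id}$ and hence $\widetilde{\bZ}_2(t,\cdot)=\mathrm{id}$, requiring nothing; and (c) the transition zone in between. On region (a), \eqref{eq:bzexpress} gives $\widetilde{\bZ}_2(t,\x)-\x=(\X_2-\X_1)(t)+(\Ot(t)-\tn{I})(\x-\X_1(t))$, which is affine in $\x$, and differentiating in time and using part (i),
\begin{equation*}
\de_t\widetilde{\bZ}_2(t,\x)=-\Ot(t)\big[(\vV_1-\vV^s)(t)+(\vw_1-\vw^s)(t)\times(\widetilde{\bZ}_2(t,\x)-\X_2(t))\big].
\end{equation*}
Since $\Ot(t)\in SO(3)$ and $\B_1(t)$ stays in the bounded set $\Omega$, the estimates of (ii) and the contribution of region (a) to (iii) follow once $\|\Ot(t)-\tn{I}\|$ and $|\X_1(t)-\X_2(t)|$ are controlled. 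For these, inserting $\vV_2=\Ot\vV^s$, $\vw_2=\Ot\vw^s$ into $\frac{\d}{\d t}\Ot=\tn{Q}_2\Ot-\Ot\tn{Q}_1$ and $\frac{\d}{\d t}(\X_2-\X_1)=\vV_2-\vV_1$, and using the $L^\infty(0,T)$ bound on $(\vV_1,\vw_1)$ implied by \eqref{eq:energy_body}--\eqref{energy:euler}, a Gr\"onwall argument gives $\|\Ot(t)-\tn{I}\|+|\X_1(t)-\X_2(t)|\leq C\int_0^t\big(|\vV_1-\vV^s|+|\vw_1-\vw^s|\big)$, and Cauchy--Schwarz in time converts the right-hand side into the $L^2(0,t)$ norms of (ii)--(iii).

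The technical core is region (c). The chain rule for $\widetilde{\bZ}_2=\bZ_2(t,\bY_1(t,\cdot))$ together with the inverse-flow identity $\de_t\bY_1=-\nabla_\x\bY_1\,\Lambda_1$ shows that
\begin{equation*}
\de_t\widetilde{\bZ}_2+\nabla_\x\widetilde{\bZ}_2\,\Lambda_1=\Lambda_2(t,\widetilde{\bZ}_2),\qquad \widetilde{\bZ}_2(0,\cdot)=\mathrm{id}.
\end{equation*}
Setting $W=\widetilde{\bZ}_2-\mathrm{id}$ and choosing the cutoffs so that $\zeta_1=\zeta_2$ throughout region (c) — admissible on $(0,T_{min})$ since both bodies stay at distance $\geq\kappa/2$ from $\de\Omega$ by \eqref{eq:tmin}--\eqref{eq:dist_cond} — one writes the source as $\Lambda_2(t,\x+W)-\nabla\widetilde{\bZ}_2\,\Lambda_1(t,\x)$ and exploits the same cancellation seen on region (a): the Jacobian $\nabla\widetilde{\bZ}_2$ rotates $\vu_{1\B}$ into the correctly framed field, so that the leading part of the source equals $\zeta\,\Ot(\bU^s_\B-\vu_{1\B})$, bounded pointwise by $C\big(|\vV_1-\vV^s|+|\vw_1-\vw^s|\big)$, the remaining contributions being $O(|W|)$ or controllable through the bounds on $\|\Ot-\tn{I}\|$ and $|\X_1-\X_2|$ already obtained. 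Integrating along the characteristics of $\Lambda_1$ and applying Gr\"onwall yields the $L^\infty$ bound on $W$ and the $W^{1,\infty}$ bound on $\de_t\widetilde{\bZ}_2$; differentiating the transport identity up to three times in $\x$ — which is where one needs $\Lambda_i\in C^3$ in space, hence $\zeta_i$ and the Bogovskii-type divergence-free correction smooth — and iterating the same Gr\"onwall scheme gives the $W^{3,\infty}$ estimate. I expect region (c) to be the main obstacle: one must verify the cancellation persists away from the rigid neighborhood of $\B_1(t)$ and track carefully all Gr\"onwall constants, which depend on the $H^m$-norm of the strong solution and on the $L^\infty(0,T)$ bounds of $(\vV_1,\vw_1)$. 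These computations are carried out in detail in \cite{KrNePi_2}.
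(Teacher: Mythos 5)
The paper does not prove this lemma itself but simply cites \cite{KrNePi_2}, Lemma 3.1, and your proposal correctly reconstructs that cited argument: part (i) is the same rotation-matrix computation via $\frac{\d}{\d t}\Ot=\tn{Q}_2\Ot-\Ot\tn{Q}_1$ and $\vw^s=\Ot^T\vw_2$, and parts (ii)--(iii) follow the same decomposition (rigid map near $\B_1(t)$, identity near $\partial\Omega$, transition zone handled by the transport identity for $\widetilde{\bZ}_2$ and Gr\"onwall bounds on $\Ot-\tn{I}$ and $\X_1-\X_2$). The only blemish is your displayed formula for $\de_t\widetilde{\bZ}_2$, where the cross product should act on $\x-\X_1(t)$ (equivalently $\Ot^T(\widetilde{\bZ}_2-\X_2)$) rather than on $\widetilde{\bZ}_2(t,\x)-\X_2(t)$ itself; since $\Ot(t)\in SO(3)$ preserves norms, this slip does not affect any of the stated estimates.
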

We would like to mention that such transformation was introduced by Inoue, Wakimoto \cite{inoue1977existence} and properties of it were discussed in details by Takahashi, see \cite{T}. The same transformation has been used in \cite{CNM,KrNePi_2,MNR} to show the weak-strong uniqueness property for the incompressible and compressible fluid-structure interaction problem.

\subsection{Transformed solution and weak formulation}
The transformed strong solution $\vU^s$ satisfies pointwise the following system of equations in the fluid part of the domain $\Omega$
\begin{equation}\label{eq:fluidtransformed}
\left\{
\begin{array}{ccc}
%\de_t r^s + \dv_{x}(r^s\bU^s)=G(r^s,\bU^s) & {\rm in} & Q_{\F 1},\\
\de_t\bU^s_{\F} + \dv(\bU^s_{\F} \otimes \bU^s_{\F})
%-\dv_{x}\tn{S}(\nabla_x \vU^s)
+\nabla P^s_{\F} = \mathbf{F}(\bU^s_{\F}),\quad \dv \bU^s_{\F}=0 & {\rm in} & Q_{\F_1}, \\
\bU^s_{\mathcal{F}} \cdot \vc{n} = \bU^s_{\mathcal{B}} \cdot \vc{n} & \mathrm{on} & (0,T) \times \partial\mc{B}_1 , \\
\displaystyle
\bU^s_{\mathcal{F}} \cdot \vc{n}=0 & {\rm on} & (0,T) \times \de \Omega,\\
%r^s(0,\cdot)=\vr_{\F 0} & {\rm in} & \Om \setminus \B_0, \\
\displaystyle
\bU^s_{\F}(0,\cdot)= \vu_{\F_0} & {\rm in} &  \F_0.
\end{array}
\right.
\end{equation}
In order to express the term on the right hand sides of the momentum equation, we introduce the following notation
\begin{align} \label{eq:Hdef}
    \tn{H} &= (\nabla_x\ZZ)^{-1} = \nabla_x \ZZj, \quad \text{ i.e. } (\tn{H})_{ij} = \de_{j} (\ZZj)_i,\\ \label{eq:GGdef}
    \tn{G} &= \tn{H}\tn{H}^T, \quad \text{ i.e. } (\tn{G})_{ij} = \de_{k} (\ZZj)_i \de_{k} (\ZZj)_j, \\
    \Gamma^i_{\alpha \beta} &= \de_{l} (\ZZj)_i \de_{\alpha\beta} (\ZZ)_l. \label{eq:Gammadef}
\end{align}
Consequently, we have
\begin{align}
%. \label{eq:Gdef}
%. G(r^s,\vU^s) &=
%. \tn{H}_{jk}\de_t(\ZZ)_k \de_j r^s, \\
\label{eq:Fdef}
\mathbf{F}_i(\vU^s_{\F}) &= -\tn{H}_{i\alpha}\de_t\de_\beta(\ZZ)_\alpha (\vU^s_{\F})_\beta
+ \Gamma^i_{\alpha\beta}
%\tn{H}_{\alpha j}
(\de_t \ZZj)_\alpha (\vU^s_{\F})_\beta
+
%\tn{H}_{\alpha j}
(\de_t \ZZj)_\beta \de_\beta (\vU^s_{\F,i})
-\Gamma^i_{\alpha\beta} (\vU^s_{\F})_\alpha(\vU^s_{\F})_\beta
%. + (\mu+\lambda)(\tn{G}-\tn{I})_{i\alpha}\de_\alpha \dv_{x}\vU^s
- (\tn{G}-\tn{I})_{i\beta}\de_\beta P^s_{\F}.
\end{align}

The velocity of the rigid body is given by \eqref{newsol3}, where the transformed translation velocity $\vV^s$ and the transformed angular velocity $\vw^s$ of the body satisfy the following system of the equations
\begin{align} \label{eq:newrigidtransformed1}
    m\frac{\d \vV^s}{\dt} &= -m (\vw^s-\vw_1)\times \vV^s + \int_{\partial\mathcal{B}_1(t)}
    P^s_{\F}\tn{I}\vc{n}\, \d S\quad
 \text{ in } (0,T), \\ \label{eq:newrigidtransformed2}
 \tn{J}_1\frac{\d \vw^s}{\dt} &= \tn{J}_1\vw^s\times\vw^s - \left((\vw^s-\vw_1)\times \tn{J}_1 \vw^s\right) + \int_{\partial\mathcal{B}_1(t)} (\x-\X_1(t)) \times P^s_{\F}\tn{I}\vc{n}\, \d S\quad \text{ in } (0,T),\\ \label{eq:newrigidtransformed3}
 \vV^s(0) &= \vV_{0},\qquad \vw^s(0)= \vw_{0},
\end{align}
where we have used $\tn{J}_1 = \Ot^T \tn{J}_2\Ot$.

Relations \eqref{eq:Fdef}-\eqref{eq:newrigidtransformed3} are obtained from a similar computation of the proof of \cite[Lemma 3.1]{CNM}.
A consequence of the Lemma \ref{l:31} is the following lemma that gives a control on the terms of \eqref{eq:Fdef} (see \cite[Lemma 3.3]{CNM}).
\begin{Lemma}\label{l:32}
The following estimate holds
\begin{eqnarray}\label{est:F}
\Vert \mathbf{F} \Vert
_{L^{2}(0,T;L^{2}(\mc{F}_1(t)))}
\leq C\left( ||\mathbf{V}_{1}-\mathbf{V}^{s}||_{L^{2}(0,T)}+||%
\vw_1-\vw^s||_{L^{2}(0,T)} \right),
\end{eqnarray}%
where {$C$ depends only on $\Vert \mathbf{U}^{s}_{\F}\Vert
_{L^{2}(0,T;H^{2}(\mc{F}_1(t)))}$, $\Vert P^s_{\F}\Vert
_{L^{2}(0,T;H^{1}(\mc{F}_1(t)))}$ and $\Vert \mathbf{U}^{s}\Vert
_{L^{\infty }(0,T;H^{1}(\mc{F}_1(t)))}$.}\
\end{Lemma}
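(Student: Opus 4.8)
\emph{Proof plan.} The plan is to bound $\mathbf{F}(\vU^s_\F)$ directly from its explicit expression \eqref{eq:Fdef}, summand by summand. The structural fact that makes this work is that each of the five terms in \eqref{eq:Fdef} is a product of a \emph{small} factor and a \emph{bounded} factor. The small factors are of two kinds: either a time derivative of one of the transformations $\widetilde{\bZ}_1,\widetilde{\bZ}_2$ (the terms with $\de_t\de_\beta(\ZZ)_\alpha$, $(\de_t\ZZj)_\alpha$ and $(\de_t\ZZj)_\beta$), or one of the geometric quantities $\Gamma^i_{\alpha\beta}$ from \eqref{eq:Gammadef} and $\tn{G}-\tn{I}$ built from \eqref{eq:Hdef}--\eqref{eq:GGdef}, each of which vanishes identically wherever $\widetilde{\bZ}_1$ (equivalently $\widetilde{\bZ}_2$) coincides with a rigid motion or with the identity, hence is supported in the transition zone and is of size $O(\|\widetilde{\bZ}_2-\mathrm{id}\|_{W^{3,\infty}})$. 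The bounded factors are $\tn{H}$, $\de_t\ZZj$, $\vU^s_\F$, $\nabla\vU^s_\F$ and $\nabla P^s_\F$. For the small factors \cref{l:31} already gives exactly what we need: \eqref{eq:317} bounds $\|\widetilde{\bZ}_2(t,\cdot)-\mathrm{id}\|_{W^{3,\infty}(\F_1(t))}$, hence also $\|\Gamma^i_{\alpha\beta}(t,\cdot)\|_{L^\infty}$ and $\|(\tn{G}-\tn{I})(t,\cdot)\|_{L^\infty}$, by $C\big(\|\vV_1-\vV^s\|_{L^2(0,t)}+\|\vw_1-\vw^s\|_{L^2(0,t)}\big)$ uniformly in $t\in(0,T)$, while \eqref{eq:316} and \eqref{eq:318} bound $\|\de_t\widetilde{\bZ}_2(t,\cdot)\|_{W^{1,\infty}(\F_1(t))}$ pointwise in $t$ by $C\big(|\vV_1-\vV^s|(t)+|\vw_1-\vw^s|(t)\big)$.

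First I would record the analogues of these bounds for the inverse map $\widetilde{\bZ}_1=\widetilde{\bZ}_2^{-1}$. Since near $\B_2(t)$ the map $\widetilde{\bZ}_1$ is the affine rigid motion $\x\mapsto\X_1(t)+\tn{O}_1(t)\tn{O}_2^T(t)(\x-\X_2(t))$ and equals the identity near $\de\Omega$, applying the inverse function theorem to the uniform-in-time smallness of $\widetilde{\bZ}_2-\mathrm{id}$ from \eqref{eq:317} (cf. \cite{KrNePi_2,CNM}) gives $\|\widetilde{\bZ}_1(t,\cdot)-\mathrm{id}\|_{W^{3,\infty}(\F_1(t))}\le C\big(\|\vV_1-\vV^s\|_{L^2(0,t)}+\|\vw_1-\vw^s\|_{L^2(0,t)}\big)$ and $\|\de_t\widetilde{\bZ}_1(t,\cdot)\|_{W^{1,\infty}(\F_1(t))}\le C\big(|\vV_1-\vV^s|(t)+|\vw_1-\vw^s|(t)\big)$; for the latter one may alternatively argue directly, since \cref{l:31}(i) and \eqref{eq:strongbody} show that near $\B_2(t)$ one has $\de_t\widetilde{\bZ}_1=(\vV_1-\vV^s)(t)$ plus a term linear in $(\vw^s-\vw_1)(t)$. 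In particular $\tn{H}=\nabla\widetilde{\bZ}_1$ and $\tn{G}=\tn{H}\tn{H}^T$ stay uniformly close to $\tn{I}$ on $(0,T)\times\F_1(t)$, hence are bounded there, and $\de_t\ZZj$ is bounded as well.

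Next I would estimate the $L^2\big(0,T;L^2(\F_1(t))\big)$ norm of each term of \eqref{eq:Fdef}. For $-\tn{H}_{i\alpha}\de_t\de_\beta(\ZZ)_\alpha(\vU^s_\F)_\beta$ and $(\de_t\ZZj)_\beta\de_\beta\vU^s_{\F,i}$ I place the transformation factor in $L^\infty_\x$, controlled pointwise in $t$ by $|\vV_1-\vV^s|(t)+|\vw_1-\vw^s|(t)$ via \eqref{eq:318} and its $\widetilde{\bZ}_1$-analogue (with $\tn{H}$ absorbed as an $L^\infty$ bound), and the remaining factor $\vU^s_\F$, resp. $\nabla\vU^s_\F$, in $L^\infty\big(0,T;L^2(\F_1)\big)$, which is finite because $\vU^s_\F\in L^\infty(0,T;H^1(\F_1))$; squaring and integrating in $t$ gives a bound $C\|\vU^s\|_{L^\infty(0,T;H^1)}\big(\|\vV_1-\vV^s\|_{L^2(0,T)}+\|\vw_1-\vw^s\|_{L^2(0,T)}\big)$. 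For the two terms carrying $\Gamma^i_{\alpha\beta}$ I place $\Gamma^i_{\alpha\beta}$ in $L^\infty_\x$, bounded by $C\big(\|\vV_1-\vV^s\|_{L^2(0,t)}+\|\vw_1-\vw^s\|_{L^2(0,t)}\big)$ via \eqref{eq:317}, and the companion factor $(\de_t\ZZj)_\alpha(\vU^s_\F)_\beta$, resp. $(\vU^s_\F)_\alpha(\vU^s_\F)_\beta$, in $L^\infty\big(0,T;L^2(\F_1)\big)$ using $H^1(\F_1)\hookrightarrow L^4(\F_1)$; the $t$-integral then produces $C\sqrt{T}\,\big(1+\|\vU^s\|_{L^\infty(0,T;H^1)}^2\big)\big(\|\vV_1-\vV^s\|_{L^2(0,T)}+\|\vw_1-\vw^s\|_{L^2(0,T)}\big)$. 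Finally, for $-(\tn{G}-\tn{I})_{i\beta}\de_\beta P^s_\F$ I place $\tn{G}-\tn{I}$ in $L^\infty_\x$, bounded uniformly in $t$ by $C\big(\|\vV_1-\vV^s\|_{L^2(0,t)}+\|\vw_1-\vw^s\|_{L^2(0,t)}\big)$, and $\nabla P^s_\F$ in $L^2\big(0,T;L^2(\F_1)\big)$, finite since $P^s_\F\in L^2(0,T;H^1(\F_1))$, which yields $C\|P^s_\F\|_{L^2(0,T;H^1)}\big(\|\vV_1-\vV^s\|_{L^2(0,T)}+\|\vw_1-\vw^s\|_{L^2(0,T)}\big)$. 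Summing the five contributions and collecting all constants into a single $C$ depending only on $\|\vU^s_\F\|_{L^2(0,T;H^2(\F_1))}$, $\|P^s_\F\|_{L^2(0,T;H^1(\F_1))}$, $\|\vU^s\|_{L^\infty(0,T;H^1(\F_1))}$ (and on $T$ and the geometry) gives \eqref{est:F}.

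The hard part is not a single deep step but the bookkeeping: for every summand one must pair the $L^\infty_t$ versus $L^2_t$ norm of the transformation factor with the $L^\infty_t L^2_\x$, $L^\infty_t L^4_\x$ or $L^2_t L^2_\x$ norm of the strong-solution factor so that the product lands in $L^2(0,T;L^2(\F_1))$ with the velocity differences appearing exactly to first power in $L^2(0,T)$. The one genuinely analytical point, as opposed to bookkeeping, is the passage from the estimates of \cref{l:31} for $\widetilde{\bZ}_2$ to those for $\widetilde{\bZ}_1=\widetilde{\bZ}_2^{-1}$ and the resulting uniform boundedness of $\tn{H}$, $\tn{G}$ and $\de_t\ZZj$ on $(0,T)\times\F_1(t)$; this rests on the uniform-in-time $W^{3,\infty}$-smallness of $\widetilde{\bZ}_2-\mathrm{id}$ supplied by \eqref{eq:317}, which in turn is exactly why the analysis had to be restricted to the no-collision interval $(0,T_{min})$.
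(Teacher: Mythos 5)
Your proof is correct and is essentially the argument the paper itself does not write out but delegates to the cited reference \cite{CNM} (Lemma 3.3 there): a term-by-term estimate of the five summands of \eqref{eq:Fdef}, pairing the smallness of the transformation factors supplied by \cref{l:31} (transferred to the inverse map $\ZZj=\ZZ^{-1}$, which is the one nontrivial point and which you handle as in \cite{CNM,KrNePi_2}) with the $L^\infty(0,T;H^1)$ and $L^2(0,T;H^1)$ regularity of $\vU^s_\F$ and $P^s_\F$. The only cosmetic deviation is that your bookkeeping never actually needs the $L^2(0,T;H^2(\F_1(t)))$ norm of $\vU^s_\F$, which is harmless since the lemma merely lists what the constant is allowed to depend on.
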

Given $\mathbf{F}$ as in \eqref{eq:Fdef}, we can write the weak formulation for the strong solution.
%. Namely, the transformed solution satisfies the following equality
%. \begin{eqnarray}
%. &&\int_{0}^{T}dt\int_{\Omega \backslash \partial \mathcal{B}_{1}(t)}\mathbf{U}^s\cdot {\partial_t}\boldsymbol{\vphi }~d\mathbf{x}+\int_{0}^{T}dt\int_{%
%. \mc{F}_1(t)} {\color{blue} \Big \langle Y_{t,\x},\vu_{\mc{F}}\rangle  \mathbf{U}^s):}%
%. {\nabla^{T}\boldsymbol{\vphi }}-(\mathbf{U}^s -\mathbf{u}_{1})\cdot \nabla \mathbf{U}^s\cdot \boldsymbol{\vphi }\Big )\ \,d\mathbf{x} \notag \\
%. &&-\int_{0}^{T}dt\int_{\Omega \backslash \partial \mathcal{B}_{1}(t)}\big (%
%. \mathbf{F}\cdot
%. \boldsymbol{\vphi }\big )\,d\mathbf{x}
%. =-\int_{\Omega }\mathbf{u}_{0}\boldsymbol{\vphi }(0,\cdot )\,d\mathbf{x}%
%. +\int_{0}^{T}((\vw^s-\vw_1)\times (\tn{J}_1 \vw^s)\cdot \boldsymbol{\vphi}_\mathcal{B}+\widetilde{
%. \vw}\times \mathbf{V}^s\cdot \boldsymbol{\vphi }_\mathcal{B})\ dt,
%. \label{Strong}
%. \end{eqnarray}%
%. for a rigid text function
%. $\boldsymbol{\vphi}_\mathcal{B}$.
Using \eqref{eq:fluidtransformed}-\eqref{eq:newrigidtransformed3} we obtain that the transformed solution satisfies the following equality for $\tau \in [0,T]$. We present more details about derivation of \eqref{strong} in Section \ref{ss:43}.
\begin{multline}\label{strong} 
- \int_0^\tau \int_{\mc{F}_1(t)} \vU^s_{\mc{F}}\cdot \frac{\partial}{\partial t}\vphi_{\mc{F}} - \int_0^\tau \int_{\mc{B}_1(t)}\vr_\B \vU^s_{\mc{B}}\cdot \frac{\partial}{\partial t}\vphi_{\mc{B}} - \int_0^\tau \int_{\mc{F}_1(t)}\Big( (  \langle Y_{t,\x},\vu_{1 \mc{F}}\rangle \otimes \vU^s_{\mc{F}}) : \nabla \vphi_{\mc{F}} - (\mathbf{U}^s_\mc{F} - \langle Y_{t,\x},\vu_{1 \mc{F}}\rangle) \cdot \nabla \mathbf{U}^s_\mc{F} \cdot \vphi_{\mc{F}} \Big)\\
- \int_0^\tau\int_{\mc{B}_1(t)} \rho_{\B}\Big( (\mathbf{u}_{1 \mc{B}} \otimes \vU^s_{\mc{B}}) : \nabla \vphi_{\mc{B}} - (\mathbf{U}^s_\mc{B} -\mathbf{u}_{1 \mc{B}})\cdot \nabla \mathbf{U}^s_\mc{B} \cdot \vphi_{\mc{B}} \Big)
= \int_{0}^{\tau}\int_{\mc{F}_1(t)}\big (%
\mathbf{F}\cdot
\vphi_{\mc{F}}\big )\\
- \int_{0}^{\tau}((\vw^s-\vw_1)\times (\tn{J}_1 \vw^s)\cdot \vphi_{\mathcal{B},\mathbf{w}}+m(\vw^s-\vw_1)\times \mathbf{V}^s\cdot \vphi_{\mathcal{B},\mathbf{V}}) 
+ \int_{\mc{F}_0} (\vU^s_{\mc{F}}\cdot \vphi_{\mc{F}})(0)
- \int_{\mc{F}_1(\tau)} (\vU^s_{\mc{F}}\cdot \vphi_{\mc{F}})(\tau)\\
+ \int_{\mc{B}_0} (\vr_\B\vU^s_{\mc{B}}\cdot \vphi_{\mc{B}})(0)
- \int_{\mc{B}_1(\tau)} (\vr_\B\vU^s_{\mc{B}}\cdot \vphi_{\mc{B}})(\tau),
\end{multline}
where the test function $\vphi\in V_T$ with $\vphi_{\mc{B}}$ is rigid on $\mc{B}_1(t)$, i.e,
\begin{equation*}
  \vphi_{\mc{B}}(t,\x)= \vphi_{\mathcal{B},\mathbf{V}} (t) + \vphi_{\mathcal{B},\mathbf{w}} (t) \times (\x-\X_1(t)),\quad \mbox{ for }\x\in \mc{B}_1(t) .
\end{equation*}
%. \begin{equation*}
%. \boldsymbol{\vphi }(t,\mathbf{x})=\boldsymbol{\vphi }_{\ell}(t)+\boldsymbol{\vphi }%
%. _{\omega}\times (\mathbf{x}-\mathbf{X}_{1}(t)), \qquad \text{for }\mathbf{x}%
%. \in \mathcal{B}_{1}(t).
%. \end{equation*}
Observe that by applying the Reynolds transport \cref{t:rey}, we have
\begin{equation} \label{strong-2}
- \int_0^\tau \int_{\mc{F}_1(t)} \vU^s_{\mc{F}}\cdot \frac{\partial}{\partial t}\vU^s_{\mc{F}} 
= 
\frac{1}{2}\int_{\mc{F}_0} (\vU^s_{\mc{F}}\cdot \vU^s_{\mc{F}})(0)
- \frac{1}{2}\int_{\mc{F}_1(\tau)} (\vU^s_{\mc{F}}\cdot \vU^s_{\mc{F}})(\tau)
+ \int_0^\tau \int_{\mc{F}_1(\tau)}(\langle Y_{t,\x},\vu_{1 \mc{F}}\rangle \otimes \vU^s_{\mc{F}}) : \nabla \vU^s_{\mc{F}}  ,
\end{equation}
\begin{equation} \label{strong-body}
- \int_0^\tau \int_{\mc{B}_1(t)} \vr_\B \vU^s_{\mc{B}}\cdot \frac{\partial}{\partial t}\vU^s_{\mc{B}} 
= 
\frac{1}{2}\int_{\mc{B}_0} (\vr_\B \vU^s_{\mc{B}}\cdot \vU^s_{\mc{B}})(0)
- \frac{1}{2}\int_{\mc{B}_1(\tau)} (\vr_\B \vU^s_{\mc{B}}\cdot \vU^s_{\mc{B}})(\tau)
+ \int_0^\tau \int_{\mc{B}_1(t)} \Big( \vr_\B(\mathbf{u}_{1 \mc{B}} \otimes \vU^s_{\mc{B}}) : \nabla \vU^s_{\mc{B}}  \Big).
\end{equation}
Consequently, by taking the test function $\vphi_{\mc{F}}= \vU^s_{\mc{F}}$, $\vphi_{\mc{B}}=\vU^s_{\mc{B}}$ in \eqref{strong} and by using \eqref{strong-2}-\eqref{strong-body}, we obtain the following energy equality: for a.e. $\tau \in [0,T]$
\begin{multline}\label{energy:strong}
\int_{\mc{F}_1(\tau)} \frac{1}{2}|\vU^s_{\mc{F}}|^2 + \int_{\mc{B}_1(\tau)} \frac{1}{2}\vr_\B |\vU^s_{\mc{B}}|^2
+  \int_0^\tau \int_{\mc{F}_1(t)}\Big(  (\mathbf{U}^s_\mc{F} - \langle Y_{t,\x},\vu_{1 \mc{F}}\rangle)\cdot \nabla \mathbf{U}^s_\mc{F} \cdot \vU^s_{\mc{F}} \Big) 
+ \int_0^\tau \int_{\mc{B}_1(t)}\vr_\B \Big(  (\mathbf{U}^s_\mc{B} -  \mathbf{u_{1 \mc{B}}})\cdot \nabla \mathbf{U}^s_\mc{B} \cdot \vU^s_{\mc{B}}) \Big)\\
=\int_{0}^{\tau}\int_{\mc{F}_1(t)}\big (
\mathbf{F}\cdot
\vU^s_{\mc{F}}\big )
- \int_{0}^{\tau}((\vw^s-\vw_1)\times (\tn{J}_1 \vw^s)\cdot \mathbf{w}^s + m(\vw^s-\vw_1)\times \mathbf{V}^s\cdot \mathbf{V}^s) 
+ \int_{\mc{F}_0} \frac{1}{2}|\mathbf{u}_{\mc{F}_0}|^2 + \frac m2 |\vV_0|^2 + \frac 12\mathbb{J}(0)\vw_0\cdot\vw_0
%\int_{\mc{B}_0} \frac{1}{2}\vr_\B |\vU^s_{\mc{B}}|^2 |_{t=0}.
\end{multline}
% with $E^s_{0}=\int_{\mc{F}_0} %\frac{1}{2}|\vU^s_{\mc{F}}|^2 + \int_{\mc{B}_0} %\frac{1}{2}\vr_\B |\vU^s_{\mc{B}}|^2 |_{t=0}$.

\subsection{Proof of Theorem \ref{thm:main result}}

\bigskip
%. We introduce the following relative energy functional
%. \begin{equation} \label{ref}
    %. \mathcal{E}(\bu_1|\bU^{s})(t)=\int_\Omega \frac{1}{2} |\vu_1-\vU^s|^2.
%. \end{equation}
%. The idea is to derive a specific relation for
%. \begin{equation} \label{ref_t0}
    %. [\mathcal{E}(\bu_1|\bU^{s})]^{t=\tau}_{t=0}.
%. \end{equation}
We introduce the following energy functional for the measure-valued solution
\begin{equation}\label{energy_funct_weak}
E(Y_{t,\x})(t) = \int_{\mc{F}_1(t)} \langle Y_{t,\x},\frac{1}{2}|\vu_{1 \mc{F}}|^2\rangle + \int_{\mc{B}_1(t)} \frac{1}{2}\vr_\B |\vu_{1 \mc{B}}|^2.
\end{equation}
Similarly, for the transformed strong solution we have
\begin{equation}\label{energy_funct_strong}
E(\vU^s)(t) = \int_{\mc{F}_1(t)} \frac{1}{2}|\vU^s_{\mc{F}}|^2 + \int_{\mc{B}_1(t)} \frac{1}{2}\vr_\B |\vU^s_{\mc{B}}|^2 = \int_{\mc{F}_1(t)} \langle Y_{t,\x},\frac{1}{2}|\vU^s_{\mc{F}}|^2\rangle + \int_{\mc{B}_1(t)} \frac{1}{2}\vr_\B |\vU^s_{\mc{B}}|^2.
\end{equation}
Given (\ref{energy_funct_weak}) and (\ref{energy_funct_strong}), we write a relative energy functional as follows
\begin{multline}\label{rel_en_funct}
\left[\mathcal{E}(Y_{t,\x}|\vU^s)\right]^{t=\tau}_{t=0}
=
\left[\int_{\mc{F}_1} \langle Y_{t,\x},\frac{1}{2}|\vu_{1 \mc{F}}-\vU^s_{\mc{F}}|^2\rangle\right]^{t=\tau}_{t=0} 
+ \left[\int_{\mc{B}_1} \frac{1}{2}\vr_\B |\vu_{1 \mc{B}}-\vU^s_{\mc{B}}|^2\right]^{t=\tau}_{t=0}
\\= \left[E(Y_{t,\x})\right]^{t=\tau}_{t=0} 
+ \left[E(\vU^s)\right]^{t=\tau}_{t=0}
- \left[\int_{\mc{F}_1} \langle Y_{t,\x},\vu_{1 \mc{F}}\rangle\cdot\vU^s_{\mc{F}}\right]^{t=\tau}_{t=0}
- \left[\int_{\mc{B}_1} \vr_\B \vu_{1 \mc{B}}\cdot\vU^s_{\mc{B}}\right]^{t=\tau}_{t=0}
\end{multline}
In particular we emphasize that the integral over the rigid body gives the control
\begin{equation}
    c(|\vV_1 - \vV^s|^2(\tau) + |\vw_1 - \vw^s|^2(\tau)) \leq \mathcal{E}(Y_{t,\x}|\vU^s)(\tau) 
\end{equation}
for some strictly positive $c$ which does not depend on $\tau$.

Now, from the weak formulation (\ref{momentum:euler}), we have

\begin{multline}\label{momentum:euler_ref}
- \int_0^\tau\int_{\mc{F}_1(t)} \langle Y_{t,\x},\vu_{1\mc{F}}\rangle\cdot {\partial_ t}\vU^s_{\mc{F}} - \int_0^\tau\int_{\mc{B}_1(t)} \vr_\B \vu_{1\mc{B}}\cdot {\partial_ t}\vU^s_{\mc{B}} - \int_0^\tau\int_{\mc{F}_1(t)} \langle Y_{t,\x},(\vu_{1\mc{F}} \otimes \vu_{1\mc{F}}) \rangle : \nabla \vU^s_{\mc{F}}
 \\=  \int_{\mc{F}_0}\langle Y_{0,\x}, \vu_{1\mc{F}}\rangle\cdot \vU^s_{\mc{F}}(0)
 - \int_{\mc{F}_1(\tau)}\langle Y_{\tau,\x}, \vu_{1\mc{F}}\rangle\cdot \vU^s_{\mc{F}}(\tau)
 + \int_{\mc{B}_0} (\vr_\B \vu_{1\mc{B}}\cdot \vU^s_{\mc{B}})(0)
 - \int_{\mc{B}_1(\tau)} (\vr_\B \vu_{1\mc{B}}\cdot \vU^s_{\mc{B}})(\tau)
 + \int_{0}^\tau \langle \mu^M_{D}, \nabla \vU^s_{\mc{F}}\rangle,
\end{multline}
where we used as a test function the strong solution $\vU^s$. Relation (\ref{momentum:euler_ref}) could be written in a more concise form as
\begin{multline}\label{momentum:euler_ref_2}
\left[\int_{\mc{F}_1(\cdot)}\langle Y_{t,\x},\vu_{1\mc{F}}\rangle\cdot \vU^s_{\mc{F}}\right]^{t=\tau}_{t=0}
+ \left[\int_{\mc{B}_1(\cdot)} \vr_\B \vu_{1\mc{B}}\cdot \vU^s_{\mc{B}}\right]^{t=\tau}_{t=0}
\\=
\int_0^\tau\int_{\mc{F}_1(t)} \langle Y_{t,\x},\vu_{1\mc{F}}\rangle\cdot {\partial_ t}\vU^s_{\mc{F}} +\int_0^\tau\int_{\mc{B}_1(t)} \vr_\B \vu_{1\mc{B}}\cdot {\partial_ t}\vU^s_{\mc{B}}  +\int_0^\tau\int_{\mc{F}_1(t)} \langle Y_{t,\x},(\vu_{1\mc{F}} \otimes \vu_{1\mc{F}}) \rangle : \nabla \vU^s_{\mc{F}}
+ \int_{0}^\tau \langle \mu^M_{D}, \nabla \vU^s_{\mc{F}}\rangle.
\end{multline}
Now, using the energy balance (\ref{energy:euler}) and (\ref{energy:strong}), we have from \eqref{rel_en_funct}:
\begin{multline*}
    \left[\mathcal{E}(Y_{t,\x}|\vU^s)\right]^{t=\tau}_{t=0}
    \leq
    -\mathcal{D}(\tau)
    - \int_0^\tau \int_{\mc{F}_1(t)}\Big( (\mathbf{U}^s_\mc{F} -\langle Y_{t,\x},\mathbf{u}_{1\mc{F}}\rangle)\cdot \nabla \mathbf{U}^s_\mc{F}\cdot \vU^s_{\mc{F}} \Big) - \int_0^\tau \int_{\mc{B}_1(t)}\vr_\B\Big(  (\mathbf{U}^s_\mc{B} -\mathbf{u}_{1\mc{B}})\cdot \nabla \mathbf{U}^s_\mc{B}\cdot \vU^s_{\mc{B}} \Big)
    \\ +\int_{0}^{\tau}\int_{\mc{F}_1(t)}\big (%
    \mathbf{F}\cdot
    \vU^s_{\mc{F}}\big )
    -\int_{0}^{\tau}((\vw^s-\vw_1)\times (\tn{J}_1 \vw^s)\cdot \mathbf{w}^s+m(\vw^s-\vw_1)\times \mathbf{V}^s\cdot \mathbf{V}^s)\\
    - \left[\int_{\mc{F}_1(\cdot)} \langle Y_{t,\x},\vu_{1\mc{F}}\rangle\cdot\vU^s_{\mc{F}}\right]^{t=\tau}_{t=0}
    - \left[\int_{\mc{B}_1(\cdot)} \vr_\B \vu_{1\mc{B}}\cdot\vU^s_{\mc{B}}\right]^{t=\tau}_{t=0}.
\end{multline*}
Namely, using \eqref{momentum:euler_ref_2}, the above relation becomes
\begin{multline} \label{rel-entr-comp2}
    \left[\mathcal{E}(Y_{t,\x}|\vU^s)\right]^{t=\tau}_{t=0} + \mathcal{D}(\tau)
    \leq
    - \int_0^\tau \int_{\mc{F}_1(t)}\Big( (\mathbf{U}^s_\mc{F} -\langle Y_{t,\x},\mathbf{u}_{1\mc{F}}\rangle)\cdot \nabla \mathbf{U}^s_\mc{F}\cdot\vU^s_{\mc{F}} \Big)
     - \int_0^\tau \int_{\mc{B}_1(t)}\vr_\B\Big(  (\mathbf{U}^s_\mc{B} -\mathbf{u}_{1\mc{B}})\cdot \nabla \mathbf{U}^s_\mc{B}\cdot\vU^s_{\mc{B}} \Big)
    \\ +\int_{0}^{\tau}\int_{\mc{F}_1(t)}\big (
    \mathbf{F}\cdot
    \vU^s_{\mc{F}}\big )
    -\int_{0}^{\tau}((\vw^s-\vw_1)\times (\tn{J}_1 \vw^s)\cdot \mathbf{w}^s+m(\vw^s-\vw_1)\times \mathbf{V}^s\cdot \mathbf{V}^s)
    - \int_0^\tau\int_{\mc{F}_1(t)} \langle Y_{t,\x},\vu_{1\mc{F}}\rangle\cdot {\partial_ t}\vU^s_{\mc{F}}\\
     -\int_0^\tau\int_{\mc{B}(t)} \vr_\B \vu_{1\mc{B}}\cdot {\partial_ t}\vU^s_{\mc{B}}  -\int_0^\tau\int_{\mc{F}_1(t)} \langle Y_{t,\x},(\vu_{1\mc{F}} \otimes \vu_{1\mc{F}}) \rangle : \nabla \vU^s_{\mc{F}}
    - \int_{0}^\tau \langle \mu^M_{D}, \nabla \vU^s_{\mc{F}}\rangle.
\end{multline}
%Using the compatibility condition (\ref{cc:euler}), we have
%\begin{multline} \label{rel-entr-comp3}
  %  \left[\mathcal{E}(\vu_{\mc{F}}, \vu_{\mc{B}}|\vU^s_{\mc{F}}, \vU^s_{\mc{B}})\right]^{t=\tau}_{t=0}+\mathcal{D}(\tau)    \leq
   % -\int_0^\tau \int_{\mc{F}_1(t)}\Big(  (\mathbf{U}^s_\mc{F} -\langle Y_{t,\x},\mathbf{u}_{1\mc{F}}\rangle)\cdot (\nabla \mathbf{U}^s_\mc{F}\vU^s_{\mc{F}}) \Big)\\ +\int_{0}^{\tau}\int_{\mc{F}_1(t)}\big (%\mathbf{F}\cdot
    %\vU^s_{\mc{F}}\big )+\int_{0}^{\tau}((\vw^s-\vw_1)\times (\tn{J}_1 \vw^s)\cdot \vU^s_{\mathcal{B},\mathbf{w}^s}+\widetilde{
    %\vw}\times \mathbf{V}^s\cdot \vU^s_{\mathcal{B},\mathbf{V}^s})
    %\\ - \int_0^\tau\int_{\mc{F}_1(t)} \langle Y_{t,\x},\vu_{1\mc{F}}\rangle\cdot {\partial_ t}\vU^s_{\mc{F}} -\int_0^\tau\int_{\mc{B}(t)} \vr_\B \vu_{\mc{B}}\cdot {\partial_ t}\vU^s_{\mc{B}}  -\int_0^\tau\int_{\mc{F}_1(t)} \langle Y_{t,\x},(\vu_{1\mc{F}} \otimes \vu_{1\mc{F}}) \rangle : \nabla \vU^s_{\mc{F}}
    %+ C\int_{0}^\tau 
    %\mathcal{D}(\cdot).
%\end{multline}
Now observe that, using (\ref{eq:fluidtransformed}), we have
\begin{multline} \label{q2}
    - \int_0^\tau\int_{\mc{F}_1(t)} \langle Y_{t,\x},\vu_{1\mc{F}}\rangle\cdot {\partial_ t}\vU^s_{\mc{F}}
    =
    \int_0^\tau\int_{\mc{F}_1(t)} \left( \dv \left( \bU^s_{\F} \otimes \bU^s_{\F} \right) \cdot \langle Y_{t,\x},\vu_{1\mc{F}}\rangle
    - \mathbf{F} \cdot \langle Y_{t,\x},\vu_{1\mc{F}}\rangle \right) + \int_0^\tau\int_{\partial \mc{B}_1(t)} (\vu_{1\mc{F}}\cdot \vc{n}) P^s_{\mc{F}} \\
= \int_0^\tau\int_{\mc{F}_1(t)} \left( \dv \left( \bU^s_{\F} \otimes \bU^s_{\F} \right) \cdot \langle Y_{t,\x},\vu_{1\mc{F}}\rangle
    - \mathbf{F} \cdot \langle Y_{t,\x},\vu_{1\mc{F}}\rangle \right) + \int_0^\tau\int_{\partial \mc{B}_1(t)} (\vu_{1\mc{B}}\cdot \vc{n}) P^s_{\mc{F}} ,
\end{multline}
where we integrated by parts the pressure term in (\ref{eq:fluidtransformed}), and use the divergence-free condition with the boundary condition for the measure-valued solution.
%Consequently, we consider the following quantity:
%\begin{multline} \label{q1}
   % - \int_0^\tau\int_{\mc{F}_1(t)} \langle Y_{t,\x},\vu_{1\mc{F}}\rangle\cdot {\partial_ t}\vU^s_{\mc{F}}
    %- \int_0^\tau\int_{\mc{F}_1(t)} \langle Y_{t,\x},(\vu_{1\mc{F}} \otimes \vu_{1\mc{F}}) \rangle : \nabla \vU^s_{\mc{F}} 
   %= \int_0^\tau\int_{\mc{F}_1(t)} \Big( \dv \left( \bU^s_{\F} \otimes \bU^s_{\F} \right) \cdot \langle Y_{t,\x},\vu_{1\mc{F}}\rangle \\
    %- \mathbf{F} \cdot \langle Y_{t,\x},\vu_{1\mc{F}}\rangle \Big) + \int_0^\tau\int_{\partial \mc{B}_1(t)} (\vu_{1\mc{B}}\cdot \vc{n}) P^s_{\mc{F}} - \int_0^\tau\int_{\mc{F}_1(t)} \langle Y_{t,\x},(\vu_{1\mc{F}} \otimes \vu_{1\mc{F}}) \rangle : \nabla \vU^s_{\mc{F}} 
%\end{multline}
%we write relation (\ref{q1}) as follows
%\begin{equation} \label{q3}
   % \int_0^\tau\int_{\mc{F}_1(t)} \dv_{x} \left( \bU^s_{\F} \otimes \bU^s_{\F} \right) \cdot \langle Y_{t,\x},\vu_{1\mc{F}}\rangle - \mathbf{F} \cdot \langle Y_{t,\x},\vu_{1\mc{F}}\rangle- \int_0^\tau\int_{\mc{F}_1(t)} \langle Y_{t,\x},(\vu_{1\mc{F}} \otimes %\vu_{1\mc{F}}) \rangle : \nabla \vU^s_{\mc{F}}
    %+\int_0^\tau\int_{\partial \Omega} P^s_{\F} \cdot \langle Y_{t,\x},\vu_{1\mc{F}}\rangle.
%\end{equation}
Consequently, using the compatibility condition (\ref{cc:euler}) and the equality \eqref{q2}, the
 relation \eqref{rel-entr-comp2} reads
\begin{multline} \label{rel-entr-comp4}
   \left[\mathcal{E}(Y_{t,\x}|\vU^s)\right]^{t=\tau}_{t=0}
    +\mathcal{D}(\tau)
    \leq
    C\int_{0}^\tau 
    \mathcal{D}(\cdot)
    -\int_0^\tau \int_{\mc{F}_1(t)}\Big(  (\mathbf{U}^s_\mc{F} -\langle Y_{t,\x},\mathbf{u}_{1\mc{F}}\rangle)\cdot \nabla \mathbf{U}^s_\mc{F}\cdot\vU^s_{\mc{F}} \Big)- \int_0^\tau \int_{\mc{B}_1(t)}\vr_\B\Big(  (\mathbf{U}^s_\mc{B} -\mathbf{u}_{1\mc{B}})\cdot \nabla \mathbf{U}^s_\mc{B}\cdot\vU^s_{\mc{B}} \Big)
    \\ +\int_{0}^{\tau}\int_{\mc{F}_1(t)}
    \mathbf{F}\cdot
    \left( \vU^s_{\mc{F}} - \langle Y_{t,\x},\vu_{1\mc{F}}\rangle \right)
    -\int_{0}^{\tau}((\vw^s-\vw_1)\times (\tn{J}_1 \vw^s)\cdot \mathbf{w}^s+m(\vw^s-\vw_1)\times \mathbf{V}^s\cdot \mathbf{V}^s)
    -\int_0^\tau\int_{\mc{B}(t)} \vr_\B \vu_{1\mc{B}}\cdot {\partial_ t}\vU^s_{\mc{B}} 
    \\ 
    %+\int_0^\tau\int_{\partial \Omega} P^s_{\F} \cdot \langle Y_{t,\x},\vu_{1\mc{F}}\rangle
 + \int_0^\tau\int_{\partial \mc{B}_1(t)} (\vu_{1\mc{B}}\cdot \vc{n}) P^s_{\mc{F}}
   +\int_0^\tau\int_{\mc{F}_1(t)} \dv \left( \bU^s_{\F} \otimes \bU^s_{\F} \right) \cdot \langle Y_{t,\x},\vu_{1\mc{F}}\rangle    - \int_0^\tau\int_{\mc{F}_1(t)} \langle Y_{t,\x},(\vu_{1\mc{F}} \otimes \vu_{1\mc{F}}) \rangle : \nabla \vU^s_{\mc{F}} .
\end{multline}
Now, we analyze the last two terms in (\ref{rel-entr-comp4}). We have
\begin{equation*}
    \int_0^\tau\int_{\mc{F}_1(t)} \dv \left( \bU^s_{\F} \otimes \bU^s_{\F} \right) \cdot \langle Y_{t,\x},\vu_{1\mc{F}}\rangle
    - \int_0^\tau\int_{\mc{F}_1(t)} \langle Y_{t,\x},(\vu_{1\mc{F}} \otimes \vu_{1\mc{F}}) \rangle : \nabla \vU^s_{\mc{F}} 
\end{equation*}
\begin{equation*}
    =  \int_0^\tau\int_{\mc{F}_1(t)} (\bU^s_{\F} \cdot \nabla \bU^s_{\F} ) \cdot \langle Y_{t,\x},\vu_{1\mc{F}}\rangle
    - \int_0^\tau\int_{\mc{F}_1(t)} \langle Y_{t,\x},\vu_{1\mc{F}}\rangle \cdot \nabla \vU^s_{\mc{F}} \langle Y_{t,\x},\vu_{1\mc{F}}\rangle
\end{equation*}
\begin{equation*}
=  \int_0^\tau\int_{\mc{F}_1(t)} \langle Y_{t,\x},\vu_{1\mc{F}}\rangle \cdot \nabla \vU^s_{\mc{F}}\Big(\bU^s_{\F}-\langle Y_{t,\x},\vu_{1\mc{F}}\rangle\Big).
\end{equation*}
Thus \eqref{rel-entr-comp4} can be rewritten as:
\begin{multline} \label{rewrite-rel-entr}
    \left[\mathcal{E}(Y_{t,\x}|\vU^s)\right]^{t=\tau}_{t=0}
    +\mathcal{D}(\tau)
    \leq
    C\int_{0}^\tau 
    \mathcal{D}(\cdot)
    -\int_0^\tau \int_{\mc{F}_1(t)}\Big(  (\mathbf{U}^s_\mc{F} -\langle Y_{t,\x},\mathbf{u}_{1\mc{F}}\rangle)\cdot \nabla \mathbf{U}^s_\mc{F}\cdot\vU^s_{\mc{F}} \Big)\\
  + \int_0^\tau\int_{\mc{F}_1(t)} \langle Y_{t,\x},\vu_{1\mc{F}}\rangle \cdot \nabla \vU^s_{\mc{F}}\cdot\Big(\bU^s_{\F}-\langle Y_{t,\x},\vu_{1\mc{F}}\rangle\Big)
     +\int_{0}^{\tau}\int_{\mc{F}_1(t)}
    \mathbf{F}\cdot
    \left( \vU^s_{\mc{F}} - \langle Y_{t,\x},\vu_{1\mc{F}}\rangle \right)\\
    - \int_0^\tau \int_{\mc{B}_1(t)}\vr_\B\Big(  (\mathbf{U}^s_\mc{B} -\mathbf{u}_{1\mc{B}})\cdot \nabla \mathbf{U}^s_\mc{B}\cdot\vU^s_{\mc{B}} \Big)
    -\int_0^\tau\int_{\mc{B}_1(t)} \vr_\B \vu_{1\mc{B}}\cdot {\partial_ t}\vU^s_{\mc{B}} 
 + \int_0^\tau\int_{\partial \mc{B}_1(t)} (\vu_{1\mc{B}}\cdot \vc{n}) P^s_{\mc{F}}\\
 -\int_{0}^{\tau}((\vw^s-\vw_1)\times (\tn{J}_1 \vw^s)\cdot \mathbf{w}^s+m(\vw^s-\vw_1)\times \mathbf{V}^s\cdot \mathbf{V}^s) = C\int_{0}^\tau 
    \mathcal{D}(\cdot) + \mc{I}_{\F}^1 + \mc{I}_{\F}^2 + \mc{I}_{\F}^3 + \mc{I}_{\B}^1 + \mc{I}_{\B}^2 + \mc{I}_{\B}^3 + \mc{I}_{\B}^4.
\end{multline}
Now, observe that 
\begin{equation*}
\mc{I}_{\F}^1 + \mc{I}_{\F}^2 = \int_0^\tau \int_{\mc{F}_1(t)}  (\mathbf{U}^s_\mc{F} -\langle Y_{t,\x},\mathbf{u}_{1\mc{F}}\rangle)\cdot \nabla \mathbf{U}^s_\mc{F}\cdot(\mathbf{U}^s_\mc{F} -\langle Y_{t,\x},\mathbf{u}_{1\mc{F}}\rangle)
\end{equation*}
and therefore 
\begin{equation*}
|\mc{I}_{\F}^1 + \mc{I}_{\F}^2| \leq C\int_{0}^\tau \mathcal{E}(\cdot).    
\end{equation*}
The estimate \eqref{est:F} of the forcing term $\mathbf{F}$ in \cref{l:32} gives us
\begin{equation*}
|\mc{I}_{\F}^3| = |\int_{0}^{\tau}\int_{\mc{F}_1(t)}
    \mathbf{F}\cdot
    \left( \vU^s_{\mc{F}} - \langle Y_{t,\x},\vu_{1\mc{F}}\rangle \right)| \leq C\int_{0}^\tau \mathcal{E}(\cdot).
\end{equation*}
Regarding the terms on the rigid body, observe that as $\vu_{1 \B}\cdot\nabla\mathbf{U}^s_\mc{B}\cdot \vu_{1 \B}=(\vu_{1 \B}\otimes \vu_{1 \B}): \nabla\mathbf{U}^s_\mc{B}=0$, we can write
\begin{align*}
&-\Big(  (\mathbf{U}^s_\mc{B} -\mathbf{u}_{1\mc{B}})\cdot \nabla \mathbf{U}^s_\mc{B}\cdot\vU^s_{\mc{B}} \Big)
    - \vu_{1\mc{B}}\cdot {\partial_ t}\vU^s_{\mc{B}} \\&= \mathbf{U}^s_\mc{B}\cdot \nabla \mathbf{U}^s_\mc{B}\cdot \vu_{1 \B} - \mathbf{U}^s_\mc{B}\cdot \nabla \mathbf{U}^s_\mc{B}\cdot \mathbf{U}^s_\mc{B} - \vu_{1 \B}\cdot\nabla\mathbf{U}^s_\mc{B}\cdot \vu_{1 \B} + \vu_{1 \B}\cdot\nabla\mathbf{U}^s_\mc{B}\cdot\mathbf{U}^s_\mc{B}  - \Big( \vU^s_{\mc{B}}\cdot \nabla \vU^s_{\mc{B}} \cdot \mathbf{u}_{1\mc{B}}+ \vu_{1\mc{B}}\cdot {\partial_ t}\vU^s_{\mc{B}} \Big)\\&
 = (\mathbf{U}^s_\mc{B} -\mathbf{u}_{1\mc{B}})\cdot \nabla \mathbf{U}^s_\mc{B}\cdot(\mathbf{u}_{1\mc{B}}- \mathbf{U}^s_\mc{B}) - \Big( \vU^s_{\mc{B}}\cdot \nabla \vU^s_{\mc{B}} \cdot \mathbf{u}_{1\mc{B}}+ \vu_{1\mc{B}}\cdot {\partial_ t}\vU^s_{\mc{B}} \Big).   
\end{align*}

Moreover, using the calculations presented in detail in \cref{17:25}, we have:
 \begin{multline}\label{gulu}
 -\int_0^\tau \int_{\mc{B}_1(t)}\Big(\vr_\B  \vU^s_{\mc{B}}\cdot \nabla \vU^s_{\mc{B}} \cdot \mathbf{u}_{1\mc{B}}+ \vr_\B \vu_{1\mc{B}}\cdot {\partial_ t}\vU^s_{\mc{B}} \Big) = -\int_0^\tau\int_{\partial \mc{B}_1(t)} (\vu_{1\mc{B}}\cdot \vc{n}) P^s_{\mc{F}} + \int_0^{\tau} (m(\vw^s-\vw_1)\times \mathbf{V}^s)\cdot \vV_1\\
 + \int_0^{\tau}((\vw^s-\vw_1)\times (\tn{J}_1 \vw^s))\cdot \vw_1.
 \end{multline}
Thus,
\begin{multline*}
\mc{I}_{\B}^1 + \mc{I}_{\B}^2 + \mc{I}_{\B}^3 + \mc{I}_{\B}^4 = \int_0^\tau\int_{\partial \mc{B}_1(t)} \vr_\B (\mathbf{U}^s_\mc{B} -\mathbf{u}_{1\mc{B}})\cdot \nabla \mathbf{U}^s_\mc{B}\cdot (\mathbf{u}_{1\mc{B}}- \mathbf{U}^s_\mc{B}) + \int_{0}^{\tau}((\vw^s-\vw_1)\times (\tn{J}_1 \vw^s)\cdot (\vw_1-\mathbf{w}^s))\\ +
\int_0^{\tau}m(\vw^s-\vw_1)\times \mathbf{V}^s\cdot (\vV_1-\mathbf{V}^s)).
\end{multline*}
So, 
\begin{equation*}
\Big| \mc{I}_{\B}^1 + \mc{I}_{\B}^2 + \mc{I}_{\B}^3 + \mc{I}_{\B}^4 \Big|
\leq C\int_{0}^\tau \mathcal{E}(\cdot).
\end{equation*}
Hence, \eqref{rewrite-rel-entr} becomes
\begin{equation*} \label{rel-entr-comp5}
   \mathcal{E}(Y_{t,\x}|\vU^s)(\tau)
    +\mathcal{D}(\tau)
    \leq
    C\int_{0}^\tau 
    \Big( \mc{E}(\cdot) + \mathcal{D}(\cdot) \Big) + \mathcal{E}(Y_{t,\x}|\vU^s)(0).
\end{equation*}
Since the initial states coincide and therefore the value of the relative energy functional at time zero is equal to zero, we use Gronwall lemma to conclude that for $t\in (0,T)$:
\begin{equation*}
\mathcal{D} = 0 \mbox{ and } Y_{t,\x}=\delta_{\vU^s_{\mc{F}}} \text { for } \x\in \mc{F}_1, \ \vu_{1\mc{B}}=\vU^s_{\mc{B}}.
\end{equation*}

Finally, we have to show that in fact $\vV_1 = \vV_2$, $\vw_1 = \vw_2$, $\B_1(t) = \B_2(t)$ and $Y_{t,\x} = \delta_{\vu_{2\F}}$ on $\F_1(t)$. This argument is the same as was presented i.e. in \cite{KrNePi_2}. We start with \eqref{eq:strongbody} to write $\vV_1 =\tn{O}_1\tn{O}_2^T\vV_2$ and $\vw_1 = \tn{O}_1\tn{O}_2^T\vw_2$, therefore
\begin{align} \label{eq:121}
    \tn{O}_1^T\vV_1 &= \tn{O}_2^T\vV_2, \\
    \tn{O}_1^T\vw_1 &= \tn{O}_2^T\vw_2. \label{eq:122}
\end{align}
Moreover, we use the relation between the angular velocity of the body and the isometries describing the motion of the body \eqref{eq:QO} to conclude
\begin{equation}
    \vw_i \times \x = \tn{Q}_i\x = \frac{\d \tn{O}_i}{\dt}\tn{O}_i^T\x
\end{equation}
for $i=1,2$ and for all $\x \in \R^3$. Thus
\begin{equation}
    \tn{O}_i^T\vw_i \times \x = \tn{O}_i^T(\vw_i \times \tn{O}_i\x) = \tn{O}_i^T\frac{\d \tn{O}_i}{\dt}\tn{O}_i^T\tn{O}_i\x = \tn{O}_i^T\frac{\d \tn{O}_i}{\dt}\x
\end{equation}
for $i=1,2$ and all $\x \in \R^3$ and finally combining this with \eqref{eq:122} we deduce
\begin{equation} \label{eq:458}
    \tn{O}_1^T\frac{\d \tn{O}_1}{\dt} = \tn{O}_2^T\frac{\d \tn{O}_2}{\dt}.
\end{equation}
We rewrite \eqref{eq:458} as
\begin{equation*}
    \frac{\d(\tn{O}_1 - \tn{O}_2)}{\dt} = (\tn{O}_1-\tn{O}_2)\tn{O}_2^T\frac{\d \tn{O}_2}{\dt}.
\end{equation*}
Denoting $\tn{O}_\Delta = \tn{O}_1 - \tn{O}_2$ and treating $\tn{O}_2$ as a given function of time we end up with
\begin{align} \label{eq:ODE1}
    \frac{\d \tn{O}_\Delta(t)}{\dt} &= \tn{O}_\Delta(t) \tn{W}(t), \\
    \tn{O}_\Delta(0) &= 0, \label{eq:ODE2}
\end{align}
for some given matrix valued function $\tn{W}(t)$. The problem \eqref{eq:ODE1}-\eqref{eq:ODE2} has a unique solution $\tn{O}_\Delta(t) = 0$, therefore $\tn{O}_1 = \tn{O}_2$ and taking into account \eqref{eq:121} and \eqref{eq:122} we finally end up with $\vV_1 = \vV_2$ and $\vw_1 = \vw_2$. This also proves that the position of the bodies of the measure-valued and strong solutions are the same, i.e. $\B_1(t) = \B_2(t)$.

Finally we can set the cutoff functions $\zeta_i(t,\x)$ ($i=1,2$) introduced in Section \ref{s:cc} to coincide in the case that $\B_1 = \B_2$. Therefore $\Lambda_1(t,\x) = \Lambda_2(t,\x)$, $\bZ_1 = \bZ_2$ and $\ZZ(t,\x) = \ZZj(t,\x) = \x$ for all $\x \in \Omega$ and $t \in (0,T)$. Finally we use \eqref{newsol2} to conclude that $\vU^s_{\F} = \vu_{2\F}$. 
This also proves that $T_{min}$ has to be equal to $T_0$.

\section{Appendix}\label{s:appendix}

\subsection{Reynolds transport theorem}

For completeness of presentation we state here the Reynolds transport theorem which is one of the key ingredients in analysis of fluid on moving domains.

\begin{Theorem}\label{t:rey}
Let $f$ be a function such that all integrals in the formula below are well defined. Let $\vu_\B$ be a rigid velocity field describing the motion of the body $\B(t)$. Then the following formula for time derivative of an integral over the fluid domain holds.
\begin{equation}
    \frac{d}{dt}\int_{\F(t)} f \dx = \int_{\F(t)} \de_t f \dx + \int_{\de \B(t)} f \vu_\B \cdot \vn\, \dS
\end{equation}
\end{Theorem}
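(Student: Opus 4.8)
The plan is to transport the moving fluid domain $\F(t)$ back onto the fixed reference domain $\F_0$ by a smooth flow, differentiate under the integral once the domain is frozen, and then identify the boundary term via the divergence theorem.

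\emph{First}, I would extend the rigid motion of the body to a flow on all of $\overline\Omega$: choose a smooth vector field $\vw(t,\cdot)$ on $\overline\Omega$ that coincides with $\vu_\B(t,\cdot)$ in a neighbourhood of $\overline{\B(t)}$ and vanishes near $\partial\Omega$ — exactly the cutoff construction $\Lambda$ of Section~\ref{s:cc} — and let $\Phi_t\colon\overline\Omega\to\overline\Omega$ be its flow, $\partial_t\Phi_t(\y)=\vw(t,\Phi_t(\y))$, $\Phi_0=\mathrm{id}$. Since $\vw$ agrees with $\vu_\B$ near the body and the body is transported by $\vu_\B$ (cf. \eqref{eq:etaODE}), one has $\Phi_t(\overline{\B_0})=\overline{\B(t)}$, hence $\Phi_t(\overline{\F_0})=\overline{\F(t)}$ for every $t$.

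\emph{Next}, the change of variables $\x=\Phi_t(\y)$ gives $\int_{\F(t)}f(t,\x)\,\dx=\int_{\F_0}f(t,\Phi_t(\y))\,J(t,\y)\,\dy$ with $J(t,\y):=\det\nabla_y\Phi_t(\y)$. The domain on the right no longer depends on $t$, so differentiation under the integral is legitimate; combining the chain rule with Euler's expansion identity $\partial_t J=(\Div_x\vw)\!\circ\!\Phi_t\cdot J$ and changing variables back, I would obtain
\[
\frac{d}{dt}\int_{\F(t)}f=\int_{\F(t)}\big(\partial_t f+\vw\cdot\nabla_x f+f\,\Div_x\vw\big)\dx=\int_{\F(t)}\partial_t f\,\dx+\int_{\F(t)}\Div_x\!\big(f\vw\big)\dx.
\]
Applying the divergence theorem on $\F(t)$, whose boundary is $\partial\Omega\cup\partial\B(t)$ with $\vn$ the outer normal of $\F(t)$ on each piece, and using $\vw=0$ on $\partial\Omega$ and $\vw=\vu_\B$ on $\partial\B(t)$, the term $\int_{\F(t)}\Div_x(f\vw)$ collapses to $\int_{\partial\B(t)}f\,\vu_\B\cdot\vn\,\dS$, which is the claimed formula. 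A fully equivalent alternative that sidesteps the flow is to write $\int_{\F(t)}f=\int_\Omega(1-\mathds{1}_\B)f$, differentiate, substitute $\partial_t\mathds{1}_\B=-\Div(\mathds{1}_\B\vu_\B)$ from \eqref{eq:chiBeq}, integrate by parts over $\B(t)$, and use $\Div\vu_\B=0$ together with $\mathds{1}_\B\equiv0$ near $\partial\Omega$.

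There is no deep obstacle — the statement is classical — so the only point that needs care is the regularity bookkeeping that justifies differentiating under the integral and using Euler's formula: it suffices that $f$ and $\partial_t f$ are continuous up to the closure of the fluid space--time domain and that the body motion (equivalently $\X$ and $\tn O$) is $C^1$ in time, which is precisely the meaning of ``all integrals in the formula below are well defined''. One should also note that the answer is independent of the chosen extension $\vw$, since only its trace on $\partial\F(t)$ enters and that trace is forced to be $\vu_\B$ on $\partial\B(t)$ and $0$ on $\partial\Omega$.
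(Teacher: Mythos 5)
Your argument is correct, and it is worth noting that the paper itself does not prove Theorem~\ref{t:rey}: it is stated in the appendix without proof, as a classical ingredient, so there is no in-paper argument to compare against. Your route — extend $\vu_\B$ by the cutoff construction of Section~\ref{s:cc}, pull the integral back to $\F_0$ along the flow $\Phi_t$, differentiate using Euler's identity $\partial_t J=(\Div \vw)\circ\Phi_t\, J$, and convert $\int_{\F(t)}\Div(f\vw)$ into a boundary term — is the standard derivation, and you handle the one point where a sign error is easy to make: with the paper's convention that $\vn$ on $\partial\B(t)$ points into the body, $\vn$ is the outer normal of the fluid domain there, so the boundary contribution is indeed $\int_{\partial\B(t)}f\,\vu_\B\cdot\vn\,\dS$ with no sign change, while the $\partial\Omega$ piece drops because $\vw$ vanishes there. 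Your alternative via $\int_{\F(t)}f=\int_\Omega(1-\mathds{1}_\B)f$ and \eqref{eq:chiBeq} is also fine, with the small caveat (which does not affect the applications in the paper) that it presupposes $f$ extended to all of $\Omega$, whereas the flow-map argument only needs $f$ on the closed fluid space--time domain; also note that the cutoff extension requires $\B(t)$ to stay away from $\partial\Omega$, which holds on the interval $(0,T_{min})$ where the theorem is actually used. A purely cosmetic remark: your auxiliary field $\vw(t,\cdot)$ collides notationally with the angular velocity $\vw$ of the paper, so rename it (e.g. $\Lambda$, as in Section~\ref{s:cc}) if this is to be inserted into the text.
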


\subsection{Weak formulation of the momentum equation}\label{App-Euler}
In this section, we present the calculation behind the weak formulation of the momentum equation \eqref{momentum:euler}.

Let all the functions be sufficiently smooth so that we can do integration by parts.
We multiply equation \eqref{fluidmotion:ineuler}$_1$ by test function $\vphi\in V_T$ and integrate over $\mc{F}(t)$ to obtain
\begin{equation}\label{express1}
\frac{d}{dt} \int_{\mc{F}(t)} \vu_{\mc{F}}\cdot \vphi_{\mc{F}} - \int_{\mc{F}(t)} \vu_{\mc{F}}\cdot \frac{\partial}{\partial t}\vphi_{\mc{F}} - \int_{\mc{F}(t)} (\vu_{\mc{F}} \otimes \vu_{\mc{F}}) : \nabla \vphi_{\mc{F}}
= -\int_{\partial \Omega} p_{\mc{F}}\mathbb{I} \vc{n}\cdot \vphi_{\mc{F}} - \int_{\partial \mc{B}(t)} p_{\mc{F}}\mathbb{I} \vc{n}\cdot \vphi_{\mc{F}}.
\end{equation}
As $\vphi \in V_T$, we have $\vphi_{\mc{F}} \cdot \vc{n}=0$ on $\partial\Omega$. Using the Reynolds transport theorem \ref{t:rey} and the mass transport of the body we have
\begin{multline*}
\frac{d}{dt}\int_{\mc{B}(t)}\vr_\B \vu_{\mathcal{B}}\cdot \vphi_{\mc{B}} = \int_{\mc{B}(t)} \frac{\de}{\de t} (\vr_\B \vu_{\mathcal{B}}\cdot \vphi_{\mc{B}}) + \int_{\mc{B}(t)} \Div \left(\vr_\B\vu_{\mc{B}}(\vu_{\mathcal{B}}\cdot \vphi_{\mc{B}})\right)\\ 
= \int_{\mc{B}(t)} \vr_\B\vu_{\mathcal{B}}\cdot \frac{\partial}{\partial t} \vphi_{\mc{B}} + \int_{\mc{B}(t)} \vr_\B \Big(\frac{\partial}{\partial t} \vu_{\mathcal{B}} + \vu_{\mathcal{B}} \cdot \nabla\vu_{\mathcal{B}}\Big)\cdot \vphi_{\mc{B}} + \int_{\mc{B}(t)} \rho_{\B} (\mathbf{u}_{\mc{B}} \otimes \vu_{\mathcal{B}}) : \nabla \vphi_{\mc{B}}.
\end{multline*}
As $\mathbb{D}(\vphi_{\mc{B}})=0$, we have $(\mathbf{u}_{\mc{B}} \otimes \vu_{\mathcal{B}}) : \nabla \vphi_{\mc{B}}=0$. Using the rigid body equations \eqref{bodymotion:ineuler} and the boundary condition $\vphi_{\F} \cdot \vc{n} = \vphi_{\B} \cdot \vc{n}$ on $\partial
\mathcal{B}(t)$, we obtain
\begin{equation}\label{express4}
- \int_{\partial \mc{B}(t)} p_{\mc{F}}\mathbb{I} \vc{n}\cdot \vphi_{\mc{F}} = -\frac{d}{dt}\int_{\mc{B}(t)}\vr_\B \vu_{\mc{B}}\cdot \vphi_{\mc{B}} + \int_{\mc{B}(t)} \vr_\B\vu_{\mc{B}}\cdot \frac{\partial}{\partial t} \vphi_{\mc{B}} .
\end{equation}
Thus by combining the above relations \eqref{express1}-\eqref{express4} and then integrating in time, we have
\begin{equation*}
- \int_0^\tau \int_{\mc{F}(t)} \vu_{\mc{F}}\cdot \frac{\partial}{\partial t}\vphi_{\mc{F}} - \int_0^\tau \int_{\mc{B}(t)}\vr_\B \vu_{\mc{B}}\cdot \frac{\partial}{\partial t}\vphi_{\mc{B}} - \int_0^\tau \int_{\mc{F}(t)} (\vu_{\mc{F}} \otimes \vu_{\mc{F}}) : \nabla \vphi_{\mc{F}}
\end{equation*}
\begin{equation}\label{weak-momentum}
= \int_{\mc{F}_0} (\vu_{\mc{F}}\cdot \vphi_{\mc{F}})(0) 
- \int_{\mc{F}_\tau} (\vu_{\mc{F}}\cdot \vphi_{\mc{F}})(\tau)
+ \int_{\mc{B}_0} (\vr_\B\vu_{\mc{B}}\cdot \vphi_{\mc{B}})(0)
- \int_{\mc{B}_\tau} (\vr_\B\vu_{\mc{B}}\cdot \vphi_{\mc{B}})(\tau)
\end{equation}
for $\tau \in [0,T]$.
\subsection{Derivation of \eqref{strong}.} \label{ss:43}
We multiply equation \eqref{eq:fluidtransformed}$_1$ by test function $\vphi\in V_T$, integrate over $\mc{F}_1(t)$ and use the Reynolds transport theorem \ref{t:rey} to obtain
\begin{multline}\label{new:express1}
\frac{d}{dt} \int_{\mc{F}_1(t)} \bU^s_{\F}\cdot \vphi_{\mc{F}} - \int_{\mc{F}_1(t)} \bU^s_{\F}\cdot \frac{\partial}{\partial t}\vphi_{\mc{F}} -  \int_{\mc{F}_1(t)}\Big( \langle Y_{t,\x},\vu_{1\mc{F}}\rangle  \otimes \vU^s_{\mc{F}}) : \nabla \vphi_{\mc{F}} - (\mathbf{U}^s_\mc{F} - \langle Y_{t,\x},\vu_{1\mc{F}}\rangle)\cdot \nabla \mathbf{U}^s_\mc{F}\cdot\vphi_{\mc{F}} \Big)\\
= -\int_{\partial \Omega} P^s_{\mc{F}}\mathbb{I} \vc{n}\cdot \vphi_{\mc{F}} - \int_{\partial \mc{B}_1(t)} P^s_{\mc{F}}\mathbb{I} \vc{n}\cdot \vphi_{\mc{F}} + \int_{\mc{F}_1(t)} \mathbf{F}\cdot \vphi_{\mc{F}},
\end{multline}
where we combine the weak formulation of the continuity equation and the boundary condition.
As $\vphi \in V_T$, we have $\vphi_{\mc{F}} \cdot \vc{n}=0$ on $\partial\Omega$. We use the Reynolds transport theorem \ref{t:rey} together with the mass transport of the body to obtain
\begin{multline*}
\frac{d}{dt}\int_{\B_1(t)}\vr_{\B} \bU^s_{\B}\cdot \vphi_{\mc{B}} = \int_{\mc{B}_1(t)} \frac{\de}{\de t} (\vr_{\B} \vU^s_{\mc{B}}\cdot \vphi_{\mc{B}}) + \int_{\mc{B}_1(t)} \Div \left( \vr_{\B}\vu_{1\mc{B}} (\vU^s_{\mc{B}}\cdot \vphi_{\mc{B}})\right)
\\ 
= \int_{\mc{B}_1(t)} \vr_{\B}\vU^s_{\mc{B}}\cdot \frac{\partial}{\partial t} \vphi_{\mc{B}} + \int_{\mc{B}_1(t)} \vr_{\B} \Big(\frac{\partial}{\partial t} \vU^s_{\mc{B}} + \vU^s_{\mc{B}} \cdot \nabla\vU^s_{\mc{B}}\Big)\cdot \vphi_{\mc{B}} + \int_{\mc{B}_1(t)} \rho_{\B}\Big( (\mathbf{u}_{1\mc{B}} \otimes \vU^s_{\mc{B}}) : \nabla \vphi_{\mc{B}} - (\mathbf{U}^s_{\mc{B}} -\mathbf{u}_{1\mc{B}})\cdot \nabla \mathbf{U}^s_{\mc{B}}\cdot\vphi_{\mc{B}} \Big).
\end{multline*}
Using the rigid body equations \eqref{eq:newrigidtransformed1}-\eqref{eq:newrigidtransformed2} and the boundary condition $\vphi_{\F} \cdot \vc{n} = \vphi_{\B} \cdot \vc{n}$ on $\partial
\mathcal{B}_1(t)$, we obtain
\begin{multline}\label{new:express4}
- \int_{\partial \mc{B}_1(t)} P^s_{\mc{F}}\mathbb{I} \vc{n}\cdot \vphi_{\mc{F}} = -\frac{d}{dt}\int_{\mc{B}_1(t)}\vr_\B \vu_{\mc{B}}\cdot \vphi_{\mc{B}} + \int_{\mc{B}_1(t)} \vr_\B\vu_{\mc{B}}\cdot \frac{\partial}{\partial t} \vphi_{\mc{B}} + \int_{\mc{B}_1(t)} \rho_{\B}\Big( (\mathbf{u}_{1\mc{B}} \otimes \vU^s_{\mc{B}}) : \nabla \vphi_{\mc{B}} - (\mathbf{U}^s_\mc{B} -\mathbf{u}_{1\mc{B}})\cdot \nabla \mathbf{U}^s_\mc{B}\cdot\vphi_{\mc{B}} \Big)\\
 - ((\vw^s-\vw_1)\times (\tn{J}_1 \vw^s)\cdot \vphi_{\mathcal{B},\mathbf{w}}+m(\vw^s-\vw_1)\times \mathbf{V}^s\cdot \vphi_{\mathcal{B},\mathbf{V}}).
\end{multline}
Thus by combining the above relations \eqref{new:express1}-\eqref{new:express4} and then integrating in time, we have for $\tau \in [0,T]$:
\begin{multline*} 
- \int_0^\tau \int_{\mc{F}_1(t)} \vU^s_{\mc{F}}\cdot \frac{\partial}{\partial t}\vphi_{\mc{F}} - \int_0^\tau \int_{\mc{B}_1(t)}\vr_\B \vU^s_{\mc{B}}\cdot \frac{\partial}{\partial t}\vphi_{\mc{B}} - \int_0^\tau \int_{\mc{F}_1(t)}\Big( ( \langle Y_{t,\x},\vu_{1\mc{F}}\rangle  \otimes \vU^s_{\mc{F}}) : \nabla \vphi_{\mc{F}} - (\mathbf{U}^s_\mc{F} - \langle Y_{t,\x},\vu_{1\mc{F}}\rangle)\cdot \nabla \mathbf{U}^s_\mc{F}\cdot\vphi_{\mc{F}} \Big)\\
- \int_0^\tau\int_{\mc{B}_1(t)} \rho_{\B}\Big( (\mathbf{u}_{1\mc{B}} \otimes \vU^s_{\mc{B}}) : \nabla \vphi_{\mc{B}} - (\mathbf{U}^s_\mc{B} -\mathbf{u}_{1\mc{B}})\cdot \nabla \mathbf{U}^s_\mc{B}\cdot\vphi_{\mc{B}} \Big)
= \int_{0}^{\tau}\int_{\mc{F}_1(t)}\big (
\mathbf{F}\cdot
\vphi_{\mc{F}}\big )\\
- \int_{0}^{\tau}((\vw^s-\vw_1)\times (\tn{J}_1 \vw^s)\cdot \vphi_{\mathcal{B},\mathbf{w}}+m(\vw^s-\vw_1)\times \mathbf{V}^s\cdot \vphi_{\mathcal{B},\mathbf{V}}) 
+ \int_{\mc{F}_0} (\vU^s_{\mc{F}}\cdot \vphi_{\mc{F}})(0)
- \int_{\mc{F}_1(\tau)} (\vU^s_{\mc{F}}\cdot \vphi_{\mc{F}})(\tau)\\
+ \int_{\mc{B}_0} (\vr_\B\vU^s_{\mc{B}}\cdot \vphi_{\mc{B}})(0)
- \int_{\mc{B}_1(\tau)} (\vr_\B\vU^s_{\mc{B}}\cdot \vphi_{\mc{B}})(\tau).
\end{multline*}

\subsection{Derivation of \eqref{gulu}.}\label{17:25}

In this section, we follow the calculations of \cite[Appendix]{KrNePi} to derive our desired identity. We know that
\begin{align*}
\vu_{1 \B}(t,x) &= \vV_1(t) + \vw_1(t) \times (\x - \X_1(t)) \qquad \text{ in } Q_{\B_1} \\
\bU^s_{\B}(t,x) &= \vV^s(t) + \vw^s(t) \times (\x - \X_1(t)) \qquad \text{ in } Q_{\B_1}.
\end{align*}
 A simple calculation gives
\begin{equation} \label{eq:Ap0}
\frac{\de \bU^s_{\B}}{\de t} + (\bU^s_{\B}\cdot\nabla)\bU^s_{\B} = \frac{\d \vV^s}{\d t} + \frac{\d \vw^s}{\d t}\times (\x-\X_1) + \vw^s \times (\vw^s \times (\x-\X_1)) \quad \text{ in } Q_{\B_1}.
\end{equation}
We use \eqref{eq:newrigidtransformed1} to deduce
\begin{equation}\label{eq:Ap1}
    \int_{\B_1(t)} \vr_\B \frac{\d \vV^s}{\d t}\cdot \vV_1  = \int_{\de \B_1(t)}  P^s_{\F}\vc{n}\cdot \vV_1 - (m(\vw^s-\vw_1)\times \mathbf{V}^s)\cdot \vV_1.
\end{equation}
Next, by \eqref{def:X} we get
\begin{align}
    &\int_{\B_1(t)} \vr_\B \frac{\d \vV^s}{\d t}\cdot (\vw_1\times (\x-\X_1))  = \left(\frac{\d \vV^s}{\d t} \times \vw_1\right)\cdot \int_{\B_1(t)} \vr_\B(\x-\X_1)  = 0,\\
    &\int_{\B_1(t)} \vr_\B \left(\frac{\d \vw^s}{\d t} \times (\x-\X_1)\right)\cdot \vV_1  = \vV_1\cdot \left(\frac{\d \vw^s}{\d t} \times \int_{\B_1(t)} \vr_\B(\x-\X_1) \right) = 0, \\
    &\int_{\B_1(t)} \vr_\B (\vw^s\times(\vw^s\times(\x-\X_1))\cdot \vV_1  = \vV_1\cdot \left(\vw^s\times \left(\vw^s\times \int_{\B_1(t)} \vr_\B(\x-\X_1) \right)\right) = 0.
\end{align}
 Using \eqref{def:J} and \eqref{eq:newrigidtransformed2} we obtain
\begin{align}
    &\int_{\B_1(t)} \vr_\B \left(\frac{\d \vw^s}{\d t} \times (\x-\X_1)\right)\cdot(\vw_1\times (\x-\X_1))   = \tn{J}_1\frac{\d \vw^s}{\d t}\cdot \vw_1 \\ \nonumber
    & \quad = (\tn{J}_1\vw^s \times \vw^s)\cdot \vw_1 - ((\vw^s-\vw_1)\times (\tn{J}_1 \vw^s))\cdot \vw_1 + \vw_1\cdot \int_{\de \B_1(t)} (\x-\X_1)\times  P^s_{\F}\vc{n} \\ \nonumber
    & \quad = \tn{J}_1\vw^s \cdot (\vw^s\times\vw_1) - ((\vw^s-\vw_1)\times (\tn{J}_1 \vw^s))\cdot \vw_1 + \int_{\de \B_1(t)} P^s_{\F}\vc{n} \cdot (\vw_1 \times (\x-\X_1)).
\end{align}
We also have
\begin{equation}\label{eq:Ap6}
    \int_{\B_1(t)} \vr_\B\left(\vw^s\times \left(\vw^s \times (\x-\X_1)\right)\right)\cdot(\vw_1\times (\x-\X_1))  = - \tn{J}_1\vw^s\cdot (\vw^s\times\vw_1).
\end{equation}
Summing \eqref{eq:Ap1}-\eqref{eq:Ap6} and using \eqref{eq:Ap0} we end up with
\begin{multline*}
 \int_0^\tau \int_{\mc{B}_1(t)}\Big(\vr_\B  \vU^s_{\mc{B}}\cdot \nabla \vU^s_{\mc{B}} \cdot \mathbf{u}_{1\mc{B}}+ \vr_\B \vu_{1\mc{B}}\cdot {\partial_ t}\vU^s_{\mc{B}} \Big) = \int_0^\tau\int_{\partial \mc{B}_1(t)} (\vu_{1\mc{B}}\cdot \vc{n}) P^s_{\mc{F}} - \int_0^{\tau} (m(\vw^s-\vw_1)\times \mathbf{V}^s)\cdot \vV_1\\
 - \int_0^{\tau}((\vw^s-\vw_1)\times (\tn{J}_1 \vw^s))\cdot \vw_1.
\end{multline*}

\section*{Acknowledgements}

M. C. has been supported partially by the Institute of Mathematics, CAS, and partially by the Croatian Science Foundation under the project MultiFM IP-2019-04-1140. O. K., \v S. N. and A. R. have been supported by the Czech Science Foundation (GA\v CR) project GA19-04243S. The Institute of Mathematics, CAS is supported by RVO:67985840. The research of T.T. is supported by the NSFC Grant No. 11801138.

\end{document}